\numberwithin{equation}{section}
\theoremstyle{plain}
\newtheorem{theorem}[subsection]{Theorem}
\newtheorem{lemma}[subsection]{Lemma}
\newtheorem{prop}[subsection]{Proposition}
\newtheorem{cor}[subsection]{Corollary}
\theoremstyle{definition}
\newtheorem{defn}[subsection]{Definition}
\newtheorem{remark}[subsection]{Remark}
\newtheorem{exam}[subsection]{Example}
\def\AA{\mathbb{A}}
\def\BB{\mathbb{B}}
\def\CC{\mathbb{C}}
\def\DD{\mathbb{D}}
\def\FF{\mathbb{F}}
\def\GG{\mathbb{G}}
\def\PP{\mathbb{P}}
\def\QQ{\mathbb{Q}}
\def\SS{\mathbb{S}}
\def\TT{\mathbb{T}}
\def\WW{\mathbb{W}}
\def\ZZ{\mathbb{Z}}
\def\calA{\mathcal{A}}
\def\calB{\mathcal{B}}
\def\calD{\mathcal{D}}
\def\calE{\mathcal{E}}
\def\calI{\mathcal{I}}
\def\calL{\mathcal{L}}
\def\calN{\mathcal{N}}
\def\calO{\mathcal{O}}
\def\calP{\mathcal{P}}
\def\calQ{\mathcal{Q}}
\def\bH{\mathbf{H}}
\def\bI{\mathbf{I}}
\def\bK{\mathbf{K}}
\def\bP{{\mathbf{P}}}
\def\bQ{\mathbf{Q}}
\def\bR{\mathbf{R}}
\newcommand{\tilw}{\widetilde{w}}
\newcommand{\tilW}{\widetilde{W}}
\newcommand{\tilC}{\widetilde{C}}
\newcommand{\tilf}{\widetilde{f}}
\newcommand{\tL}{\widetilde{L}}
\newcommand{\tX}{\widetilde{X}}
\newcommand{\fra}{\mathfrak{a}}
\newcommand{\frG}{\mathfrak{G}}
\newcommand{\frA}{\mathfrak{A}}
\newcommand{\dualG}{\widehat{G}}
\newcommand{\dualT}{\widehat{T}}
\newcommand{\dualH}{\widehat{H}}
\newcommand\dualg{{\widehat{\mathfrak{g}}}}
\newcommand\aff{\textup{aff}}
\newcommand{\AS}{\textup{AS}}
\newcommand{\Bl}{\textup{Bl}}
\newcommand{\Bun}{\textup{Bun}}
\newcommand{\Dyn}{\textup{Dyn}}
\newcommand\ev{\textup{ev}}
\newcommand{\Fl}{\textup{Fl}}
\newcommand\Four{\textup{Four}}
\newcommand\Gal{\textup{Gal}}
\newcommand{\Gr}{\textup{Gr}}
\newcommand{\Hk}{\textup{Hk}}
\newcommand{\IC}{\textup{IC}}
\newcommand\id{\textup{id}}
\newcommand{\Kl}{\textup{Kl}}
\newcommand\Lie{\textup{Lie}}
\newcommand\opp{\textup{opp}}
\newcommand\Out{\textup{Out}}
\newcommand\Perv{\textup{Perv}}
\newcommand{\pr}{\textup{pr}}
\newcommand\Rep{\textup{Rep}}
\newcommand{\Res}{\textup{Res}}
\newcommand\rs{\textup{rs}}
\newcommand\Spec{\textup{Spec}}
\newcommand\St{\textup{St}}
\newcommand\st{\textup{st}}
\newcommand{\Swan}{\textup{Swan}}
\newcommand\Sym{\textup{Sym}}
\newcommand{\Vect}{\textup{Vect}}
\newcommand\Aut{\textup{Aut}}
\newcommand\Hom{\textup{Hom}}
\newcommand\End{\textup{End}}
\newcommand{\unc}{\underline{c}}
\newcommand{\unu}{\underline{u}}
\newcommand{\unw}{\underline{w}}
\newcommand\GL{\textup{GL}}
\newcommand\Ug{\textup{U}}
\newcommand\SO{\textup{SO}}
\newcommand\Og{\textup{O}}
\newcommand\Sp{\textup{Sp}}
\newcommand{\Gm}{\GG_m}
\newcommand{\ad}{\textup{ad}}
\newcommand{\Ad}{\textup{Ad}}
\def\sc{\textup{sc}}
\newcommand{\der}{\textup{der}}
\newcommand\xch{\mathbb{X}^*}
\newcommand\xcoch{\mathbb{X}_*}
\newcommand{\isom}{\stackrel{\sim}{\to}}
\newcommand{\incl}{\hookrightarrow}
\newcommand{\surj}{\twoheadrightarrow}
\newcommand{\ep}{\epsilon}
\renewcommand{\l}{\lambda}
\renewcommand{\L}{\Lambda}
\newcommand{\chk}{\textup{char}(k)}
\newcommand{\oll}{\overleftarrow}
\newcommand{\orr}{\overrightarrow}
\newcommand{\leftexp}[2]{{\vphantom{#2}}^{#1}{#2}}
\newcommand{\pH}{\leftexp{p}{\textup{H}}}
\newcommand{\Ql}{\overline{\QQ}_\ell}
\newcommand{\const}[1]{\overline{\QQ}_{\ell,#1}}
\newcommand{\cohog}[2]{\textup{H}^{#1}({#2})}     
\newcommand{\cohoc}[2]{\textup{H}_{c}^{#1}({#2})} 
\newcommand{\upH}{\textup{H}}
\newcommand{\conv}[1]{\stackrel{#1}{*}}
\newcommand{\jiao}[1]{\langle{#1}\rangle}
\newcommand{\wt}[1]{\widetilde{#1}}
\newcommand\mat[4]{\left(\begin{array}{cc} #1 & #2 \\ #3 & #4 \end{array}\right)}  
\newcommand{\quash}[1]{}
\newcommand{\tKl}{\wt{\Kl}}
\newcommand{\pline}{X^{\circ}}
\newcommand{\tpline}{\tX^{\circ}}
\newcommand{\Lab}{L^{\textup{ab}}}
\newcommand{\tLab}{\widetilde{L}^{\textup{ab}}}
\newcommand{\Pss}{\bP^{\textup{ss}}_0}
\newcommand{\Gss}{\Gamma^{\textup{ss}}_0}
\newcommand{\rot}{\textup{rot}}
\newcommand{\grot}{\Gm^{\rot}}
\newcommand{\bcP}{\overline{\calP}}
\newcommand{\dG}{\leftexp{L}{G}}
\newcommand{\bark}{\bar{k}}
\newcommand{\barZ}{\bar{Z}}
\newcommand{\Wa}{W_{\textup{aff}}}
\title{Epipelagic representations and rigid local systems}
\author{Zhiwei Yun}
\email{zyun@stanford.edu}
\address{Department of Mathematics, Stanford University, 450 Serra Mall, Stanford, CA 94305}
\thanks{Supported by the NSF grant DMS-1302071 and Packard Fellowship.}
\date{}
\subjclass[2010]{Primary 22E55, 22E57; Secondary 11L05}
\keywords{}
\begin{document}

\begin{abstract}
We construct automorphic representations for quasi-split groups $G$ over the function field $F=k(t)$ one of whose local components is an epipelagic representation in the sense of Reeder and Yu. We also construct the attached Galois representations under the Langlands correspondence. These Galois representations give new classes of conjecturally rigid, wildly ramified $\dG$-local systems over $\PP^{1}-\{0,\infty\}$ that  generalize the Kloosterman sheaves constructed earlier  by Heinloth, Ng\^o and the author. We study the monodromy of these local systems and compute all examples when $G$ is a classical group.  
\end{abstract}

\maketitle

\tableofcontents

\section{Introduction}

\subsection{The goal} Let $G$ be a reductive quasi-split group over a local field $K$ as in \S\ref{sss:qs}. Recently, Reeder and Yu \cite{RY} constructed a family of supercuspidal representations of $p$-adic groups called {\em epipelagic representations}, generalizing the {\em simple supercuspidals} constructed earlier by Gross and Reeder \cite{GR}. These are supercuspidal representations constructed by compactly inducing certain characters of the pro-$p$ part of a parahoric subgroup $\bP$ of $G$. The construction of epipelagic representations uses the $\theta$-groups $(L_{\bP}, V_{\bP})$ studied by Vinberg {\em et al}. 


The main results of this paper include
\begin{itemize}
\item Realization of epipelagic representations (for the local function field) as a local component of an automorphic representation $\pi$ of $G(\AA_{F})$, where $F$ is the function field $k(t)$. This is done in Proposition \ref{p:uniquefunction}. Here $\pi=\pi(\chi,\phi)$ depends on two parameters $\chi$ and $\phi$ (a multiplicative on $\tL_{\bP}(k)$ and a {\em stable} linear functional $\phi: V_{\bP}\to k$). 

\item Construction of the Galois representation $\rho_{\pi}:\Gal(F^{s}/F)\to\dG(\Ql)$ attached to $\pi=\pi(\chi,\phi)$ under the Langlands correspondence.  This is done in Theorem \ref{th:main} and Corollary \ref{c:Klphi}. The Galois representation $\rho_{\pi}$ can be equivalently thought of as an $\ell$-adic $\dG$-local systems $\Kl_{\dG, \bP}(\chi, \phi)$ over $\PP^{1}-\{0,\infty\}$, where $\dG$ is the Langlands dual group of $G$. We also offer a way to calculate these $\dG$-local systems in terms of the Fourier transform (see Proposition \ref{p:Four}).

\item Description of the local monodromy the local systems $\Kl_{\dG, \bP}(\chi, \phi)$. The main result is Theorem \ref{th:unip} that describes the monodromy of $\Kl_{\dG, \bP}(\chi, \phi)$ at $0$ when $G$ is split and $\chi=1$. We also conditionally deduce the cohomological rigidity of $\Kl_{\dG, \bP}(\chi,\phi)$ (see \S\ref{s:rigidity}), and make predictions on the monodromy at $0$ in general (see \S\ref{ss:localmono}).

\item Computation of the local systems $\Kl_{\dG, \bP}(\chi, \phi)$ when $G$ is a classical group (\S\ref{s:u}, \S\ref{s:sp} and \S\ref{s:o}).  In each of these cases, we express the local system $\Kl_{\dG, \bP}(\chi, \phi)$ as the Fourier transform of the direct image complex of an explicit morphism to $V_{\bP}$.
As a result, we obtain new families of exponential sums (indexed by $G$ and $\bP$) generalizing Kloosterman sums, see Corollaries \ref{c:uKl}, \ref{c:spKl} and \ref{c:oKl}.
\end{itemize}

\subsection{Comparison with \cite{HNY}}
In \cite{HNY}, Heinloth, Ng\^o and the author considered the case of ``simple supercuspidals'' of Gross and Reeder. These correspond to the special case $\bP=\bI$ is an Iwahori subgroup. In \cite{HNY} we constructed the Kloosterman sheaves $\Kl_{\dualG, \bI}(\chi,\phi)$, and proved properties of their local and global monodromy expected by Frenkel and Gross in \cite{FG} for split $G$.

The general case to be considered in this article exhibits certain interesting phenomena which are not seen in the special case treated in \cite{HNY}. 

First, the $\dualG$-local systems $\Kl_{\dualG, \bP}(\chi,\phi)$ on $\PP^{1}-\{0,\infty\}$ form an algebraic family as the additive character $\phi$ defining $\pi_{\infty}$ varies. In other words, these local systems are obtained from a single ``master'' $\dualG$-local system $\Kl_{\dualG,\bP}(\chi)$ on the larger base $V^{*,\st}_{\bP}$ by restriction to various $\Gm$-orbits.  Here the base space $V^{*,\st}_{\bP}$ is the stable locus of the dual vector space of $V_{\bP}$ which is part of Vinberg's $\theta$-groups.

Second, when $\chi=1$, the local system $\Kl_{\dualG, \bP}(1,\phi)$ has unipotent tame monodromy at $0$ given by a unipotent class $\unu$ in $\dualG^{\sigma,\circ}$ ($\sigma$ is the pinned automorphism defining $G$), which only depends on the type of $\bP$. On the other hand, the types of $\bP$ are in bijection with regular elliptic $\WW$-conjugacy classes $\unw$ in $\WW\sigma$ (at least when $\chk$ is large, see \S\ref{ss:bij}). Therefore our construction gives a map
\begin{equation}\label{wu}
\{\mbox{regular elliptic $\WW$-conjugacy classes in $\WW\sigma$}\}\to\{\mbox{unipotent classes in $\dualG^{\sigma,\circ}$}\}
\end{equation}
sending $\unw$ to $\unu$ via the intermediate step of an admissible parahoric subgroup $\bP$.  When $G$ is split, Theorem \ref{th:unip} gives an alternative description of this map using Lusztig's theory of cells in affine Weyl groups, and using this we are able to compute the map \eqref{wu} for all types of $G$ in \S\ref{ss:tables} (verified for split $G$ and conjectural in general). In \cite{L}, Lusztig defined a map from all conjugacy classes in $\WW$ to unipotent conjugacy classes in $\dualG$. One can check case-by-case that this map coincides with the restriction of Lusztig's map to regular elliptic conjugacy classes.

\subsection{Open questions} As in the work of Frenkel and Gross \cite{FG}, we expect that there should be a parallel story when $\ell$-adic local systems are replaced with connections on algebraic vector bundles (on varieties over $\CC$). In particular, the ``master'' $\dualG$-local system $\Kl_{\dualG,\bP}(\chi)$ should correspond to a $\dualG$-connection over $V^{*,\st}_{\bP,\CC}$. When $G$ is a classical group, the formulae in \S\ref{s:u}-\S\ref{s:o} give descriptions of these connections as Fourier transform of Gauss-Manin connections. Are there simple formulae for these connection in general?

Another problem is to calculate the Euler characteristics (equivalently Swan conductor at $\infty$) of the local systems $\Kl^{V}_{\dualG,\bP}(1,\phi)$ for representations $V$ of $\dualG$. Such calculations would give evidence to (and sometimes proofs of) the prediction made by Reeder and Yu about the Langlands parameters of epipelagic representations (see \S\ref{ss:wildL}). We do one such calculation in the case $G$ is a unitary group (see Proposition \ref{p:uchi}), but in general the complexity of the calculation seems to be formidable (see also the proof of \cite[Theorem 4]{HNY} in which we treated the case $\bP$ is Iwahori, which already took many pages). 

While we have a more or less complete picture for the local monodromy of the local systems $\Kl_{\dualG,\bP}(1,\phi)$ (partly conjectural), we do not discuss their global monodromy here, i.e., the Zariski closure of the image of the geometric $\pi_{1}(\PP^{1}-\{0,\infty\})$ in $\dualG$. It can be as small as a finite group as we see in Proposition \ref{p:um2}(2) when $G$ is an odd unitary group and $\bP$ is a special parahoric subgroup. Does the global monodromy group of $\Kl_{\dualG,\bP}(1,\phi)$ depend only on $\bP$, and if so, how do we read it off from $\bP$?

\subsection{Notation and convention}

\subsubsection{The function field} Let $k$ be a finite field and $F=k(t)$ be the rational function field over $k$. Places of $F$ are in natural bijection with closed points of $X:=\PP^{1}_{k}$, the set of which is denoted by $|X|$. In particular we have two places $0$ and $\infty$ of $F$. For a place $x\in|X|$, let $F_{x}$ (resp. $\calO_{x}$) be the completed local field (resp. completed local ring) of $X$ at $x$, and let $k_{x}$ be the residue field at $x$. Let $\AA_{F}=\prod'_{x\in |X|}F_{x}$ be the ring of ad\`eles of $F$.

\subsubsection{Sheaves} Let $\ell$ be a prime number different from $\chk$. We shall consider constructible $\Ql$-complexes over various algebraic stacks over $k$ or $\bark$. All sheaf-theoretic operations are understood as {\em derived functors}.
 
\subsubsection{The absolute group data} Let $\GG$ be a split reductive group over $k$ whose derived group is absolutely almost simple. Fix a pinning $\dagger=(\BB, \TT,\cdots)$ of $\GG$, where $\BB$ is a Borel subgroup of $\GG$ and $\TT\subset\BB$ a split maximal torus. Let $\WW=N_{\GG}(\TT)/\TT$ be the Weyl group of $\GG$. Let $\Phi\subset\xch(\TT)$ be the set of roots. Fix a cyclic subgroup $\ZZ/e\ZZ\incl\Aut^{\dagger}(\GG)$ of the pinned automorphism group of $\GG$, and denote the image of $1$ by $\sigma\in\Aut^{\dagger}(\GG)$. 

We assume $\chk$ is prime to $e$ and that $k^{\times}$ contains $e$th roots of unity. 

Let $Z\GG$ be the center of $\GG$. We also assume that $\xch(Z\GG)^{\sigma}=0$.

\subsubsection{The quasi-split group}\label{sss:qs} Let $\tX\to X$ be the $\mu_{e}$-cover totally ramified over $0$ and $\infty$. Then $\tX$ is also isomorphic to $\PP^{1}_{k}$, and we denote its affine coordinate by $t^{1/e}$. We denote
\begin{equation*}
\pline:=X-\{0,\infty\} ;\hspace{1cm} \tpline:=\tX-\{0,\infty\}.
\end{equation*}

The data $\GG$ and $\sigma$ define a quasi-split group scheme $G$ over $\pline$ which splits over the $\mu_{e}$-cover $\tpline\to\pline$. More precisely, for any $k[t,t^{-1}]$-algebra $R$, we have $G(R)=\GG(R\otimes_{k[t,t^{-1}]}k[t^{1/e}, t^{-1/e}])^{\mu_{e}}$ where $\mu_{e}$ acts on $t^{1/e}$ by multiplication and on $\GG$ via the fixed map $\mu_{e}\incl\Aut^{\dagger}(\GG)$. Since $\xch(Z\GG)^{\sigma}=0$, the center of $G$ does not contain a split torus. 

Let $\SS$ be the neutral component of $\TT^{\sigma}$. Then $S=\SS\otimes_{k}F$ is a maximal split torus of $G$ over $F$.

\subsubsection{Langlands dual group} Let $\dualG$ be the reductive group over $\Ql$ whose root system is dual to that of $\GG$. We also fix a pinned $\dagger$ of $\dualG$, through which we identify $\Aut^{\dagger}(\dualG)$ with $\Aut^{\dagger}(\GG)$. We define the Langlands dual group $\dG$ of $G$ to be $\dG=\dualG\rtimes\mu_{e}$ where $\mu_{e}$ acts through $\mu_{e}\incl\Aut^{\dagger}(\GG)\cong\Aut^{\dagger}(\dualG)$.

\subsubsection{Loop groups}\label{sss:loop}
We shall use the convention in \cite[\S2.2]{Ymotive} for loop groups. For example, when $x\in X$, we use $L_{x}G$ (resp. $L^{+}_{x}G$, if $x\in \pline$) to denote the loop group (resp.  positive loop group) of the group scheme $G$ at $x$. Then $L_{x}G$ (resp. $L_{x}^{+}G$) is a group ind-scheme (resp. pro-algebraic group) over $k(x)$, the residue field of $x$. A parahoric subgroup of $G(F_{x})$ is also viewed as a pro-algebraic subgroup of $L_{x}G$ over $k(x)$.  Since $G$ splits over $\tpline$, for $\wt{x}\in\tpline$, we use $L_{\wt{x}}\GG$ to denote the loop group of the constant group scheme $G\times_{\pline}\tpline\cong \GG\times \tpline$ at $\wt{x}$. 

\section{Epipelagic representations and automorphic representations}
In this section we let $K=F_{\infty}$ denote the local field of $F$ at $\infty$. We first recall the construction of epipelagic representations of $G(K)$ following Reeder and Yu \cite{RY}. We then realize these epipelagic representations as the local components  at $\infty$ of automorphic representations of $G(\AA_{F})$.

\subsection{Admissible parahorics and $\theta$-groups}
The Borel subgroup $\BB\subset\GG$ allows us to define a standard Iwahori subgroup $\bI\subset G(K)$ in the following way. Let $K_{e}$ be the totally ramified extension of $K$ of degree $e$, then $G(K)\subset \GG(K_{e})$ by construction. We define $\bI$ to be the preimage of $\BB$ under the homomorphism $G(K)\cap \GG(\calO_{K_{e}})\to\GG(\calO_{K_{e}})\surj\GG$, the last map being given by $\calO_{K_{e}}\surj k$. 

Let $\bP\subset G(K)$ be a standard parahoric subgroup (i.e., $\bI\subset\bP$). Let $\bP\supset\bP^{+}\supset\bP^{++}$ be the first three steps in the Moy-Prasad filtration of $\bP$. In particular, $\bP^{+}$ is the pro-unipotent radical of $\bP$ and $L_\bP:=\bP/\bP^{+}$ is the Levi factor of $\bP$. We may view $L_{\bP}$ as a subgroup of $\bP$ containing $\SS=\TT^{\sigma,\circ}$. Let $V_{\bP}=\bP^{+}/\bP^{++}$, a vector space over $k$ on which $L_{\bP}$ acts. The pair $(L_{\bP}, V_{\bP})$ is an example of a $\theta$-group in the terminology of the Vinberg school.

The parahoric $\bP$ is called {\em admissible} if there exists a closed orbit of $L_\bP$ on the dual space $V^{*}_\bP$ with finite stabilizers. Such orbits are called {\em stable}. Stable orbits form an open subset $V^{*,\st}_\bP\subset V^{*}_\bP$. Elements in $V^{*,\st}_{\bP}$ are called stable functionals on $V_{\bP}$. For a complete list of admissible parahoric subgroups for various types of $G$ when $\chk$ is large, see the tables in \cite[\S7.1-7.2]{GLRY}.

\subsection{Relation with regular elliptic numbers}\label{ss:bij}
Let $\WW'=\WW\rtimes\mu_{e}$ where $\mu_{e}$ acts on $\WW$ via its pinned action on $\GG$. Springer \cite{Spr} defined the notion of regular elements for $(\WW,\sigma)$. We shall use a slightly different notion of $\ZZ$-regularity defined by Gross, Levy, Reeder and Yu in \cite[Definition 1]{GLRY}. An element $w\in \WW'$ is {\em $\ZZ$-regular} if it permutes the roots $\Phi$ freely. See \cite[Proposition 1]{GLRY} for the relation between $\ZZ$-regularity and Springer's notion of regularity. Combining \cite[Proposition 1]{GLRY} and \cite[Proposition 6.4(iv)]{Spr} one easily sees that a $\ZZ$-regular element in $\WW\sigma\subset\WW'$ is determined, up to $\WW$-conjugacy, by its order.

On the other hand, an element $w\in \WW'$ is {\em elliptic} if $\xch(\TT^{\ad})^{w}=0$. The order of a $\ZZ$-regular elliptic element in $\WW\sigma\subset \WW'$ is called a {\em regular elliptic number} of the pair $(\WW,\sigma)$. The above discussion says that the assignment $w\mapsto \textup{ord}(w)$ is a bijection between $\ZZ$-regular elliptic elements  in $\WW\sigma$ up to $\WW$-conjugacy and regular elliptic number of the pair $(\WW,\sigma)$.

To each standard parahoric $\bP$, we can assign a natural number $m$ as follows. Consider the apartment $\frA$ in the building of $G(K)$ corresponding to the maximal split torus $S=\SS\otimes_{k}F$. Let $\xi$ be the barycenter of the facet corresponding to $\bP$. The number $m=m(\bP)$ is defined as the smallest positive integer such that $\alpha(\xi)\in\frac{1}{m}\ZZ$ for all affine roots $\alpha$ of $G(K)$ with respect to $S$ (see \cite[\S3.3]{RY}). 

When $\bP$ is admissible, $\chk$ is not torsion for $G$ and $\chk\nmid m(\bP)$, it was shown in \cite[Corollary 5.1]{RY} that $m(\bP)$ is a regular elliptic number of $(\WW,\sigma)$, and conversely every regular elliptic number $m$ determines an admissible parahoric $\bP$ up to conjugacy. Combing these facts, one sees that when $\chk$ is larger than the twisted Coxeter number $h_{\sigma}$ of $(\GG,\sigma)$, there are natural bijections between the following three sets
\begin{enumerate}
\item Admissible standard parahorics $\bP$ of $G(K)$;
\item $\ZZ$-regular elliptic elements  $w\in \WW\sigma$ up to $\WW$-conjugacy;
\item Regular elliptic numbers $m$ of the pair $(\WW, \sigma)$.
\end{enumerate}

For a list of regular elliptic numbers in all types of $G$, see \S\ref{ss:tables}.

\begin{exam}\label{ex:adm}
\begin{enumerate}
\item The Iwahori $\bI$ is always admissible. It corresponds to the twisted Coxeter number $h_{\sigma}$. In this case $L_{\bI}=\TT^{\sigma,\circ}$, and $V_{\bI}$ is the sum of affine simple root spaces of $G$.
\item Let $\sigma\in\Out(\GG)$ by the opposition of the Dynkin diagram. Then $-1\in\WW\sigma$ is a regular elliptic element, and $m=2$ is a regular elliptic number. The corresponding admissible $\bP_{2}$ is a maximal parahoric except in type $A_1$ and $C_n$. The Levi quotient $L_2\cong \GG^{\theta, \circ}$ where $\theta$ is a Chevalley involution on $\GG$, and $V_\bP$ can be identified with the $(-1)$-eigenspace of $\theta$ on $\Lie\GG$.
\item Besides the case $m=2$, there are a few more cases of admissible parahorics that are maximal:
\begin{equation*}
\leftexp{3}{D}_{4} \ (m=3), E_{6} \ (m=3), \leftexp{2}{E}_{6} \ (m=4), E_{8} \ (m=3,4,5) \textup{ and }F_{4}\  (m=3).
\end{equation*}

\item\label{subreg} There are a few cases of admissible parahorics that are minimal (but not Iwahori). They are
\begin{equation*}
\leftexp{2}{A}_{2},\leftexp{2}{A}_{5}, C_2,D_4,\leftexp{3}{D}_{4},E_6,\leftexp{2}{E}_6, E_7, E_8, F_4, G_2.
\end{equation*}
In these cases, the admissible minimal parahoric corresponds to the unique vertex in the affine Dynkin diagram $\wt\Dyn(G)$ with three or more edges coming towards it. 
\end{enumerate}
\end{exam}

From now on, till the end of \S\ref{s:rigidity}, we shall fix an admissible standard parahoric subgroup $\bP$ of $G(K)$. Let $m=m(\bP)$ as defined in \S\ref{ss:bij}. 

\subsection{Epipelagic supercuspidal representations}\label{ss:epi} Fix a nontrivial character $\psi:k=\FF_q\to\QQ_\ell(\mu_p)^\times$. For a stable functional $\phi\in V^{*,\st}_{\bP}(k)$, viewed as a linear map $\phi: V_{\bP}\to k$, Reeder-Yu \cite[Proposition 2.4]{RY} show that the compact induction
\begin{equation*}
\textup{c-Ind}^{G(K)}_{\bP^{+}}(\psi\phi)
\end{equation*}
is a finite direct sum of irreducible supercuspidal representations of $G(K)$ (recall the center of $G(K)$ is finite). Its simple summands are called {\em epipelagic representations} of $G(K)$ attached to the parahoric $\bP$ and the stable functional $\phi$. These irreducible representations of $G(K)$ are characterized by having an $\psi\phi$-eigenvector under $\bP^{+}$ and having depth $1/m$.

\subsection{Expected Langlands parameter}\label{ss:wildL}
Let $\calI_{K}\lhd W(K^{s}/K)$ be the inertia group and the Weil group of $K$. Let $\pi$ be an epipelagic representation of $G(K)$ attached to $\bP$ and a stable linear functional $\phi: V_{\bP}\to k$. According the the local Langlands conjecture, there should be a Galois representation $\rho_{\pi}:W(K^{s}/K)\to \dG(\Ql)$ attached to $\pi$ as the conjectural Langlands parameter. In particular, $\calI_{K}$ acts on the Lie algebra $\dualg$ of $\dualG$ by composing $\rho_{\pi}$ with the adjoint representation of $\dualG$. Guided by the conjecture in \cite{GR} relating the adjoint gamma factors and formal degrees, Reeder and Yu predicted in \cite[\S7.1]{RY} that\begin{enumerate}
\item $\dualg^{\calI_{K}}=0$;
\item $\Swan(\dualg)=\#\Phi/m$.
\end{enumerate}
If moreover $\chk$ does not divide the order of $\WW$, Reeder and Yu made even sharper predictions: the restriction of $\rho_{\pi}$ to $\calI_{K}$ should look like
\begin{equation*}
\xymatrix{1\ar[r] & \calI^+_{K} \ar[r]\ar[d] & \calI_{K}\ar[r]\ar[d]^{\rho_{\pi}} & \calI^t_{K}\ar[r]\ar[d] & 1\\
1\ar[r] & \dualT\ar[r] & N_{\dG}(\dualT)\ar[r] & \WW'\ar[r] & 1}
\end{equation*}
where such that a generator of the tame inertia $\calI^{t}_{K}$ maps to a regular elliptic element $w\in \WW\sigma$ of order $m$.

\subsection{Realization in automorphic representations}\label{ss:auto}
Recall $K=F_{\infty}$ and now we denote $\bP$ by $\bP_{\infty}$. Let $\bI_0\subset G(F_0)$ be the Iwahori subgroup corresponding to the opposite Borel $\BB^{\opp}$ of $\GG$, as $\bI=\bI_{\infty}$ was constructed from $\BB$. Let $\bP_{0}\subset G(F_{0})$ be the parahoric subgroup containing $\bI_0$ of the same type as $\bP_{\infty}$. Let $\wt\bP_{0}$ be the normalizer of $\bP_{0}$ in $ G(F_0)$ and $\Omega_{\bP}=\wt\bP_{0}/\bP_{0}$ \footnote{One can show that $\Omega_{\bP}$ is always equal to the full stabilizer $\Omega$ of the fundamental alcove under the extended affine Weyl group of $G(K)$. In other words, the sub-Dynkin diagram of the affine Dynkin diagram $\wt\Dyn(G)$ of $G$ representing an admissible standard parahoric $\bP$ is always stable under the automorphisms of $\wt{\Dyn}(G)$ fixing the extended node.}. The Levi quotient of $\bP_{0}$ is again identified with $L_{\bP}$. Let $\Lab_{\bP}=L_{\bP}/L_{\bP}^{\der}$ be the maximal quotient torus of $L_{\bP}$. Let $\Pss=\ker(\bP_{0}\to \Lab_{\bP})$ and let
\begin{equation*}
\tLab_{\bP}=\wt\bP_{0}/\Pss.
\end{equation*}
Note that $\tLab_{\bP}$ is an extension of the finite group $\Omega_{\bP}$ by the torus $\Lab_{\bP}$. 

Fix a character $\chi:\tLab_\bP(k)\to\Ql^\times$ and a {\em stable} linear functional $\phi:V_{\bP}\to k$.

Now we try to classify automorphic representations $\pi=\otimes'_{x\in|X|}\pi_{x}$ of $G(\AA_{F})$ satisfying
\begin{itemize}
\item $\pi_x$ is unramified for $x\neq 0,\infty$;
\item $\pi_0$ has an eigenvector under $\wt\bP_{0}$ on which it acts through $\chi$ via the quotient $\tLab_{\bP}$; 
\item $\pi_\infty$ has an eigenvector under $\bP^{+}_\infty$ on which it acts through $\psi\phi$ (i.e., $\pi_{\infty}$ is an epipelagic supercuspidal representation attached to $\bP^{+}_{\infty}$ and the stable functional $\phi$).
\end{itemize}

\begin{prop}\label{p:uniquefunction} There is a unique automorphic representation $\pi=\pi(\chi,\phi)$ of $G(\AA_{F})$ satisfying all the above conditions. Moreover, we have 
\begin{enumerate}
\item $\pi$ is cuspidal, and appears with multiplicity one in the automorphic spectrum of $G$.
\item Both eigenspaces $\pi_{0}^{(\wt\bP_{0},\chi)}$ and $\pi_{\infty}^{(\bP_{\infty}^{+},\psi\phi)}$ are one-dimensional.
\end{enumerate}
\end{prop}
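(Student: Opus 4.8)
The plan is to prove existence, uniqueness, cuspidality, multiplicity one, and one-dimensionality of the eigenspaces all at once by a counting argument using the Arthur–Selberg trace formula, or more precisely the geometric/automorphic interpretation of the space of level structures. The key idea is that the automorphic conditions imposed single out a space of functions on a specific moduli stack of $G$-bundles with level structure at $0$ and $\infty$, and one computes the dimension of this space to be exactly $1$, forcing all the claimed properties.

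\medskip

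\noindent\textbf{Step 1: Reformulation via moduli of bundles.} First I would rephrase the problem geometrically. Let $\Bun$ be the moduli stack of $G$-torsors on $X=\PP^1$ equipped with a $\bP^+_\infty$-level structure at $\infty$ and a $\Pss$-level structure at $0$ (so that the residual $\tLab_\bP(k)$-action at $0$ and the $V_\bP(k)$-action at $\infty$ make sense). By the standard dictionary between automorphic forms for a function field and functions on the adelic double coset $G(F)\backslash G(\AA_F)/\prod K_x$ — here using weak approximation and the fact that $\PP^1$ minus two points has trivial $\Pic$ up to the relevant level — automorphic representations $\pi$ satisfying the three bullet conditions correspond to $(\tLab_\bP(k)\times V_\bP(k))$-eigenfunctions on $\Bun(k)$ transforming by $(\chi,\psi\phi)$. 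So it suffices to show this eigenspace is one-dimensional, and that the corresponding $\pi$ is cuspidal with multiplicity one.

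\medskip

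\noindent\textbf{Step 2: Rigidity of the underlying bundle.} The heart of the matter is to show that the relevant locus of $\Bun$ consists of a single point with prescribed automorphisms. This is exactly where admissibility of $\bP$ and stability of $\phi$ enter. I expect one proves: (a) any $G$-bundle on $\PP^1$ admitting the level structures with nonzero $\psi\phi$-eigenvector at $\infty$ must be the "trivial" bundle (or a unique specified bundle), by an argument analogous to \cite{HNY} — degree/instability considerations force triviality because the epipelagic level at $\infty$ is "as nontrivial as possible" (depth $1/m$, stable functional), and any Harder–Narasimhan instability would produce invariants contradicting stability of $\phi$; (b) the groupoid of such objects, after imposing the eigen-conditions, is a single $k$-point whose automorphism group is a finite group $\mathcal{A}$ such that the character $(\chi,\psi\phi)$ is trivial on $\mathcal{A}$ exactly when it is — i.e. the stabilizer computation shows the eigenspace is either $0$ or $1$-dimensional, and the stability hypothesis on $\phi$ together with admissibility guarantees the nonzero case. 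The finiteness of the relevant stabilizer is precisely the definition of a stable functional (finite stabilizers in the $\theta$-group), so this is where the Vinberg-theoretic input is used.

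\medskip

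\noindent\textbf{Step 3: Cuspidality and multiplicity one.} Given that the space of such forms is one-dimensional and supported at a single bundle, cuspidality follows because an Eisenstein series (or any non-cuspidal form) would have nonzero constant term along some proper parabolic, but the epipelagic vector at $\infty$ is supercuspidal locally, so its local constant terms vanish, forcing the global constant term to vanish; hence $\pi$ is cuspidal. Multiplicity one follows since the global eigenspace is one-dimensional and the local components at $0,\infty$ are already shown (by Reeder–Yu, \cite[Prop. 2.4]{RY}, plus the one-dimensionality of $\pi_\infty^{(\bP_\infty^+,\psi\phi)}$) to pin down $\pi_\infty$ up to the finitely many epipelagic summands; the one-dimensional eigenspace selects one summand, and the strong multiplicity one theorem for the unramified places $x\neq 0,\infty$ (since $\pi$ is cuspidal on a function field and $G$ is close to its simply connected cover in the relevant sense) upgrades this to multiplicity one in the full automorphic spectrum. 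Part (2) — one-dimensionality of both eigenspaces — is then immediate: $\pi_\infty^{(\bP^+_\infty,\psi\phi)}$ is one-dimensional by the local theory (the eigenvector is unique up to scalar in an epipelagic summand), and $\pi_0^{(\tilde\bP_0,\chi)}$ is one-dimensional since $\pi_0$ is a tame principal-series-type representation with a unique (up to scalar) vector of that type, or alternatively this also falls out of the global one-dimensionality.

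\medskip

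\noindent\textbf{Main obstacle.} The hardest step is Step 2, the rigidity statement: proving that the moduli problem degenerates to a single point with the right automorphism group. In \cite{HNY} this was done for the Iwahori case by an explicit and somewhat delicate analysis of $G$-bundles on $\PP^1$ with Iwahori level at $0$ and a wild level at $\infty$; here one must handle a general admissible parahoric $\bP$, and the instability-ruling-out argument must be organized around the combinatorics of $m(\bP)$, the barycenter $\xi$, and the $\theta$-group $(L_\bP, V_\bP)$. The cleanest route is probably to work on the level of the loop groups (as set up in \S\ref{sss:loop}), reduce to a statement about $L_\infty^+ G$-orbits and the affine Grassmannian/flag variety, and invoke that a stable $\phi$ has finite stabilizer to control the automorphisms. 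Everything else (Steps 1 and 3) is a fairly standard package once Step 2 is in hand.
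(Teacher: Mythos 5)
There is a genuine gap: the heart of the proposition is precisely your Step 2, and you do not carry it out — you only name it as the main obstacle. The paper's proof of this step is not an instability/Harder--Narasimhan argument about the underlying bundle; it is a computation on the Birkhoff decomposition $G(F_\infty)=\bigsqcup_{\tilw\in\tilW_{\bP}\backslash\tilW/W_{\bP}}\wt\Gamma_{0}\tilw\bP_\infty$ (a parahoric variant of \cite[Proposition 1.1]{HNY}). For a stratum with $\tilw^{-1}\xi\neq\xi$, the two eigen-conditions force any $f$ supported there to satisfy $\jiao{\phi,\Ad(\ell^{-1})V_{\bP}(\alpha)}=0$ for all affine roots $\alpha$ with $\alpha(\xi)=\tfrac1m$ and $(\tilw\alpha)(\xi)\leq0$; hence the \emph{closed} orbit $L_{\bP}\cdot\phi$ meets the coordinate subspace $V^{*,\tilw}_{\bP}$, and contracting by the cocharacter $\lambda=m(\tilw^{-1}\xi-\xi)$ produces a point of the orbit fixed by $\lambda(\Gm)$, contradicting the finiteness of stabilizers of stable functionals. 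This is where closedness of the stable orbit (not just finiteness of stabilizers) and the barycenter combinatorics are actually used, and without this argument your claim that the eigenspace is supported at a single point is unproved; ``degree/instability of the bundle'' does not engage the character $\psi\phi$ on the non-open strata, which is what kills them.

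A second, smaller but real, problem is your Step 3: you invoke strong multiplicity one at the unramified places to get $m(\pi)=1$, but strong multiplicity one is not available for a general quasi-split $G$ (it is a $\GL_n$ phenomenon), and your claim that $\pi_0^{(\wt\bP_{0},\chi)}$ is one-dimensional ``by local principal-series considerations'' is unjustified. Neither is needed: once $\dim\calA=1$ and cuspidality are established (your cuspidality argument via vanishing Jacquet modules of the compactly induced supercuspidal at $\infty$ is essentially the paper's), the identity $\dim\calA=\sum_{\pi}m(\pi)\dim\pi_{0}^{(\wt\bP_{0},\chi)}\dim\pi_{\infty}^{(\bP_{\infty}^{+},\psi\phi)}$ over the discrete spectrum forces uniqueness of $\pi$, $m(\pi)=1$, and one-dimensionality of both eigenspaces simultaneously. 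So the correct global bookkeeping is the counting identity, and the essential missing content in your proposal is the explicit Birkhoff-stratum vanishing argument.
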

\begin{proof} Consider the vector space of $\Ql$-valued functions
\begin{equation*}
\calA:=\textup{Fun}\left( G(F)\backslash G(\AA_F)/\prod_{x\neq0,\infty} G(\calO_x)\right)^{(\wt\bP_{0},\chi)\times(\bP^{+}_\infty,\psi\phi)}.
\end{equation*}
Here the superscript means taking the eigenspace under $\wt\bP_{0}\times\bP^{+}_{\infty}$ on which $\wt\bP_{0}$ acts through $\chi$ and $\bP^{+}_{\infty}$ acts through $\psi\phi$. We first show that $\dim\calA=1$.

Let $\wt\Gamma_{0}= G(k[t,t^{-1}])\cap\wt\bP_{0}$. Then $\wt{L}_{\bP}$ is also a subgroup of $\wt\Gamma_{0}$. Let  $W_{\bP}$ be the Weyl group of $L_{\bP}$, identified with a subgroup of the extended affine Weyl group $\tilW$ of $G(F_{\infty})$ (both are with respect to $\SS$).  Let $\tilW_{\bP}=N_{\tilW}(W_{\bP})$, which is an extension of $\Omega_{\bP}$ by $W_{\bP}$.

A parahoric variant of \cite[Proposition 1.1]{HNY} gives an equality between double cosets
\begin{equation*}
G(F)\backslash G(\AA_F)/(\wt\bP_{0}\times\prod_{x\neq0,\infty} G(\calO_x)\times\bP^{+}_\infty)=\wt\Gamma_{0}\backslash G(F_\infty)/\bP^{+}_{\infty}
\end{equation*}
and the Birkhoff decomposition
\begin{equation*}
G(F_\infty)=\bigsqcup_{\tilw\in \tilW_{\bP}\backslash\tilW/W_{\bP}}\wt\Gamma_{0} \tilw \bP_\infty.
\end{equation*}

Recall the apartment $\frA$ and the barycenter $\xi$ of the facet corresponding to $\bP_{\infty}$ as in \S\ref{ss:bij}. The set $\Psi_{\aff}$ of affine roots of $G(F_{\infty})$ are certain affine functions on $\frA$. For a subgroup $\bK$ of $G(F_{\infty})$, we use $\Psi(\bK)$ to denote the set of affine roots whose root subgroups are contained in $\bK$.  Therefore $\Psi(\bP_\infty)=\{\alpha\in\Psi_{\aff}|\alpha(\xi)\geq0\}$, $\Psi(\Gamma_{0})=\{\alpha\in\Psi_{\aff}|\alpha(\xi)\leq0\}$, and $\Psi(L_{\bP})=\{\alpha\in\Psi_{\aff}|\alpha(\xi)=0\}$ 

By the definition of $m=m(\bP)$, we have $\alpha(\xi)\in\frac{1}{m}\ZZ$. Moreover, since $\xi$ is the barycenter of its facet, $\Psi(\bP^{+}_\infty)-\Psi(\bP^{++}_\infty)=\{\alpha\in\Psi_{\aff}|\alpha(\xi)=\frac{1}{m}\}$ (these are exactly the affine roots that appear in $V_{\bP}$), and $\Psi(\bP^{++}_\infty)=\{\alpha\in\Psi_{\aff}|\alpha(\xi)\geq\frac{2}{m}\}$.

Suppose $f$ is nonzero on the double coset $\wt\Gamma_{0}\tilw \ell\bP^{+}_\infty$ for some $\tilw\in\tilW$ and $\ell\in L_{\bP}$.  Note that $f$ is a function on the quotient $\Gss\backslash(\wt\Gamma_{0}\tilw \ell\bP^{+}_\infty)/\bP^{++}_{\infty}$ (where $\Gss:=\ker(\wt\Gamma_{0}\to\tLab_{\bP})$),  and is $(V_{\bP}, \psi\phi)$-eigen. For $\alpha\in\Psi(\bP^{+}_\infty)-\Psi(\bP^{++}_\infty)$ (equivalently, $\alpha(\xi)=\frac{1}{m}$), we identify the affine root subgroup of $\alpha$ with the $\alpha$-weight space $V_{\bP}(\alpha)$ of $V_{\bP}$. Suppose also $\tilw\alpha\in\Psi(\Gss)$ (equivalently $(\tilw\alpha)(\xi)\leq0$). then for any $u_{\alpha}\in V_{\bP}(\alpha)$, we have
\begin{equation*}
\psi(\jiao{\phi, \Ad(\ell^{-1})u_{\alpha}})f(\tilw \ell)=f(\tilw \ell \Ad(\ell^{-1})u_{\alpha})=f(\tilw u_{\alpha}\ell)=f(\Ad(\tilw)u_{\alpha} \tilw\ell)=f(\tilw\ell).
\end{equation*}
Therefore we must have $\jiao{\phi,\Ad(\ell^{-1})V_{\bP}(\alpha)}=0$. The annihilator of $\bigoplus_{\alpha(\xi)=\frac{1}{m}, (\tilw\alpha)(\xi)\leq0}V_{\bP}(\alpha)\subset V_{\bP}$ in the dual space is $V^{*,\tilw}_{\bP}:=\bigoplus_{\alpha(\xi)=\frac{1}{m}, (\tilw\alpha)(\xi)>0}V_{\bP}(\alpha)^{*}=\bigoplus_{\beta(\xi)=-\frac{1}{m}, (\tilw\beta)(\xi)<0}V^{*}_{\bP}(\alpha)$. Then the above discussion shows that the orbit $L_{\bP}\cdot\phi$ intersects $V^{*,\tilw}_{\bP}$.

For $\beta(\xi)=\frac{1}{m}$, the condition $(\tilw\beta)(\xi)<0$ is equivalent to saying that $(\tilw\alpha)(\xi)\leq\alpha(\xi)$, or $\jiao{\bar{\beta},\tilw^{-1}\xi-\xi}\leq0$, where $\bar{\beta}$ is the vector part of $\beta$. In other words, in the weight decomposition of $V^{*,\tilw}_{\bP}$ under the torus $\SS$, a weight $\bar{\beta}\in\xch(\SS)$ appears if and only if it satisfies $\jiao{\bar{\beta},\tilw^{-1}\xi-\xi}\leq0$.

Suppose $\tilw^{-1}\xi\neq \xi$, let $\lambda=m(\tilw^{-1}\xi-\xi)\in\xcoch(\SS)$. Any point of $V^{*,\tilw}_{\bP}=\oplus_{\jiao{\bar{\beta},\lambda}\leq0} V^{*}_{\bP}(\beta)$ under the action of the one-dimensional torus $\lambda(\GG_m)$ has a limit point in $\oplus_{\jiao{\bar{\beta},\lambda}=0}V^{*}_{\bP}(\beta)$. Since the orbit $L_{\bP}\cdot\phi$ is closed, it contains a point $\phi'\in\oplus_{\jiao{\bar{\beta},\lambda}=0}V^{*}_{\bP}(\beta)$. But then the torus $\lambda(\GG_m)$ fixes $\phi'$, contradicting the assumption that $\phi$, hence $\phi'$, should have finite stabilizer under $L_{\bP}$.  This shows that $f$ must be zero on $\Gamma\tilw \bP_\infty$ for $\tilw^{-1}\xi\neq \xi$. 

Those $\tilw$ with $\tilw^{-1}\xi=\xi$ are precisely those in $\tilW_{\bP}$, therefore, $f$, as a function on $ G(F_\infty)$, is supported on the unit coset $\wt\Gamma_{0}\bP_{\infty}=\wt\Gamma_{0}\times\bP^{+}_\infty$. The $(\wt\bP_{0},\chi)$-eigen property of $f$ as a function on $G(\AA_{F})$  implies that it is left $(\wt\Gamma_{0},\chi)$-eigen as a function on $G(F_{\infty})$. Together with the right $(\bP^{+}_\infty,\psi\phi)$-eigen property, the function $f$ is unique up to a scalar. We have shown that $\dim \calA=1$.

Next we check that $\calA$ consists of cuspidal functions. In fact, for any $f\in\calA$, $g\in G(\AA_{F})$ and any proper $F$-parabolic $P\subset G$ with unipotent radical $U_{P}$, the constant term $\int_{U_{P}(F)\backslash U_{P}(\AA_{F})}f(ng)dn$ can be written as a finite sum of the form $h(g_{\infty}):=\int_{\Gamma\backslash U_{P}(F_{\infty})}f(g^{\infty}, n_{\infty}g_{\infty})dn_{\infty}$, for some discrete subgroup $\Gamma\subset U_{P}(F_{\infty})$ and $g=(g^{\infty}, g_{\infty})$ where $g_{\infty}\in G(F_{\infty}), g^{\infty}\in G(\AA^{\infty}_{F})$. The function $h:G(F_{\infty})\to \Ql$ defined above is left $U_{P}(F_{\infty})$-invariant and right $(\bP^{+}_{\infty}, \psi\phi)$-eigen, hence induces a $U_{P}(F_{\infty})$-invariant map $c_{h}:\textup{c-Ind}^{G(F_{\infty})}_{\bP^{+}_{\infty}}(\bar{\psi}\phi)\to\Ql$ by $h'\mapsto \int_{G(F_{\infty})/\bP^{+}_{\infty}}h(x)h'(x^{-1})dx$. However, as we discussed in \S\ref{ss:epi}, $\textup{c-Ind}^{G(F_{\infty})}_{\bP^{+}_{\infty}}(\bar{\psi}\phi)$ is a finite direct sum of supercuspidal representations, hence has zero Jacquet module with respect to $P$. Therefore $c_{h}=0$ and $h$ must be the zero function. This implies that the constant term $\int_{U_{P}(F)\backslash U_{P}(\AA_{F})}f(ng)dn$ must be zero.

Since cuspidal functions belong to the discrete spectrum, we have
\begin{equation}\label{mult}
\dim\calA=\sum_{\pi}m(\pi)\dim\pi_{0}^{(\wt\bP_{0},\chi)}\dim\pi_{\infty}^{(\bP_{\infty}^{+},\psi\phi)}.
\end{equation}
where the sum is over isomorphism classes of $\pi$ satisfying the conditions in \S\ref{ss:auto}, and $m(\pi)$ is the multiplicity that $\pi$ appears in the automorphic spectrum of $G(\AA_{F})$. Since $\dim\calA=1$, the right side of \eqref{mult} only has one nonzero term, in which all factors are equal to one.
\end{proof}

\section{Generalized Kloosterman sheaves}\label{s:cons} We keep the notations from \S\ref{ss:auto}. In this section, we will construct generalized Kloosterman sheaves as Galois representations attached to the automorphic representations $\pi(\chi,\phi)$ in Proposition \ref{p:uniquefunction}. The construction uses ideas from the geometric Langlands correspondence.

\subsection{A sheaf on the moduli space of $G$-bundles} 
Let $\Bun_{G}(\Pss,\bP^{++}_\infty)$ be the moduli stack of $G$-bundles over $X=\PP^{1}$ with $\Pss$-level structures at $0$ and $\bP^{++}_\infty$-level structures at $\infty$. This is an algebraic stack over $k$. For the construction of moduli stacks with parahoric level structures, such as $\Bun_{G}(\bP_{0}, \bP_{\infty})$, see \cite[\S4.2]{GS}; the moduli stack $\Bun_{G}(\Pss,\bP^{++}_\infty)$ is an $\Lab_{\bP}\times (\bP_{\infty}/\bP_{\infty}^{++})$-torsor over  $\Bun_{G}(\bP_{0}, \bP_{\infty})$. In the sequel we abbreviate $\Bun:=\Bun_{G}(\Pss,\bP^{++}_{\infty})$. The trivial $G$-bundle with the tautological level structures at $0$ and $\infty$ gives a base point $\star\in\Bun(k)$.

There is an action of $\tLab_{\bP}\times V_{\bP}$ on $\Bun$ because $\wt\bP_{0}$ normalizes $\Pss$ and $\bP_{\infty}^{+}$ normalizes $\bP^{++}_{\infty}$. The character $\chi$ defines a rank one Kummer local system $\calL_\chi$ on $\tLab_{\bP}$ such that its Frobenius traces at $k$-rational points give back the character $\chi:\tLab_{\bP}(k)\to \Ql^{\times}$. Let $S$ be a scheme and $s:S\to V^{*}_{\bP}$ be a morphism. We also consider $V_{\bP}\times S$ as a constant additive group over $S$, over which we have a rank one local system $\AS_{S}:=(\id\times s)^{*}\jiao{,}^{*}\AS_{\psi}$ where $\jiao{,}: V_{\bP}\times V^{*}_{\bP}\to\GG_a$ is the natural pairing and $\AS_\psi$ is the rank one Artin-Schreier local system on $\GG_{a}$ given by the additive character $\psi$. 

Both $\calL_{\chi}$ and $\AS_{S}$ are character sheaves. We spell out what this means for $\AS_{S}$. Let $a: V_{\bP}\times V_{\bP}\times S\to V_{\bP}\times S$ be the addition map in the first two variables, and let $p_{23}:V_{\bP}\times V_{\bP}\times S\to V_{\bP}\times S$ be the projection onto the last two factors. Then there is an isomorphism $a^{*}\AS_{S}\cong p_{23}^{*}\AS_{S}$, which satisfies the usual cocycle relation when pulled back to $V^{3}_{\bP}\times S$.

For any scheme $S$ over $V^{*}_{\bP}$, we consider the product $\Bun\times S$, on which the algebraic group $\tLab_{\bP}$ and the constant group scheme $V_{\bP}\times S$ acts. We may then talk about the derived category of $(\tLab_{\bP},\calL_{\chi})$-equivariant and $(V_{\bP}\times S, \AS_{S})$-equivariant $\Ql$-complex on $\Bun\times S$. We denote this category by $\calD(\chi, S)$.

Let $\Bun_{G}(\wt\bP_{0}, \bP^{+}_{\infty})$ be the quotient of $\Bun$ by the $\tLab_{\bP}\times V_{\bP}$-action. The parahoric analog of the Birkhoff decomposition \cite[Proposition 1.1]{HNY} gives a decomposition
\begin{equation}\label{Birk}
\Bun_{G}(\wt\bP_{0}, \bP^{+}_{\infty})(k)\cong\wt\Gamma_{0}\backslash G(F_{\infty})/\bP^{+}_{\infty}=\bigsqcup_{w\in \tilW_{\bP}\backslash \tilW/W_{\bP}}\wt\Gamma_{0}\backslash(\wt\Gamma_{0}\tilw\bP_{\infty})/\bP^{+}_{\infty}.
\end{equation}
Moreover, the same is true when $k$ is replaced by its algebraic closure (and with $G(F_{\infty}), \bP_{\infty}$ etc. base changed to $\bark$). By the parahoric analog of \cite[Corollary 1.3(3)]{HNY}, when $\tilw=1$, the unit double coset is an open point with trivial stabilizer, which is also the image of $\star$. The preimage $U$ of this open point in $\Bun$ is then a $\tLab_{\bP}\times V_{\bP}$-torsor trivialized by the base point $\star$, giving an open immersion 
\begin{equation*}
j:U\cong\tLab_{\bP}\times V_{\bP}\hookrightarrow\Bun.
\end{equation*}
For $?=!$ or $*$, let
\begin{equation*}
A_{?}(\chi, S)=(j\times\id_{S})_?(\calL_{\chi}\boxtimes\AS_{S})\in \calD(\chi,S).
\end{equation*}

\begin{lemma}\label{l:clean}  Let $S$ be a scheme over $V^{*,\st}_{\bP}$. 
\begin{enumerate}
\item\label{Aclean} Any object  $A\in \calD(\chi, S)$ has vanishing stalks outside $U\times S$. In particular, the canonical map $A_{!}(\chi, S)\to A_{*}(\chi, S)$ is an isomorphism. We shall denote them by $A(\chi,S)$.
\item The functor
\begin{eqnarray*}
D^{b}_{c}(S)&\to& \calD(\chi,S)\\
C&\mapsto& (j\times\id_{S})_{!}((\calL_{\chi}\boxtimes\AS_{S})\otimes\pr_{S}^{*}C)
\end{eqnarray*}
is an equivalence of categories. Here $\pr_{S}: U\times S\to S$ is the projection.
\end{enumerate}
\end{lemma}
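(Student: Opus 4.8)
The proof of Lemma \ref{l:clean} hinges on the interplay between the two equivariance structures and the fact that each nontrivial Birkhoff stratum carries a nontrivial character.

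\textbf{Part (1): the cleanness statement.} The plan is to show that any $A\in\calD(\chi,S)$ has vanishing stalks along each stratum $Z_{\tilw}\times S$ for $\tilw\neq 1$ in the decomposition \eqref{Birk}, arguing base-pointwise over $S$ and fiberwise over each closed point $\bark$-point. Fix such a $\tilw$ and a geometric point $s$ of $S$ lying over $\phi\in V^{*,\st}_{\bP}$. I would first identify the preimage of the stratum $Z_{\tilw}$ in $\Bun$: it is a homogeneous space for $\tLab_{\bP}\times V_{\bP}$, and the stabilizer of a point contains a nontrivial subgroup of the unipotent group $V_{\bP}$ — concretely, the root subgroups $V_{\bP}(\alpha)$ with $(\tilw\alpha)(\xi)\le 0$, exactly as in the computation in the proof of Proposition \ref{p:uniquefunction}. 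An object of $\calD(\chi,S)$ that is $(V_{\bP}\times S,\AS_S)$-equivariant must have stalk on which this stabilizer subgroup acts both trivially (by equivariance with respect to a \emph{point} stabilizer) and through the character $\psi\jiao{\phi,\Ad(\ell^{-1})(-)}$ (by the Artin--Schreier equivariance). The key point — and this is where stability of $\phi$ enters — is that $\phi$ restricted to $\bigoplus_{(\tilw\alpha)(\xi)\le 0}\Ad(\ell^{-1})V_{\bP}(\alpha)$ is nonzero: this was essentially proved inside the proof of Proposition \ref{p:uniquefunction} by the one-parameter subgroup $\lambda=m(\tilw^{-1}\xi-\xi)$ argument, since otherwise $\lambda(\Gm)$ would fix a point in the closed orbit $L_{\bP}\cdot\phi$, contradicting finiteness of stabilizers. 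Hence the two equivariance constraints are incompatible, forcing the stalk to vanish. Once all stalks outside $U\times S$ vanish, the stratum-by-stratum triangle identifying $A_!$ and $A_*$ collapses, giving $A_!(\chi,S)\isom A_*(\chi,S)$; I would phrase this via the standard recollement (open-closed) triangle $j_!j^*\to\id\to i_*i^*$ applied to $A_*(\chi,S)$, whose third term vanishes by the stalk computation.

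\textbf{Part (2): the equivalence of categories.} Given Part (1), every object of $\calD(\chi,S)$ is supported on $U\times S\cong\tLab_{\bP}\times V_{\bP}\times S$, and the functor in question is just ``extend by zero'' from $S$. I would construct the quasi-inverse as $A\mapsto \iota^*A\otimes(\calL_\chi\boxtimes\AS_S)^{-1}$ descended along $\pr_S$, where $\iota:U\times S\hookrightarrow\Bun\times S$ and the descent uses the character-sheaf (cocycle) property of $\calL_\chi\boxtimes\AS_S$ spelled out before the lemma. The content is that an object of $\calD(\chi,S)$ supported on $U\times S$ is precisely a $(\tLab_{\bP}\times V_{\bP})$-equivariant sheaf on the \emph{torsor} $U\times S$ with coefficients twisted by the character sheaf; since $U\times S$ is a trivial torsor (trivialized by $\star$), such data is the same as an honest complex on the quotient $S$. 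Checking that the two functors are mutually inverse is then a formal unwinding of equivariance data, using that both $\calL_\chi$ and $\AS_S$ are multiplicative so the twists compose correctly.

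\textbf{Main obstacle.} The crux is the incompatibility of the two equivariance structures on nontrivial strata, i.e., showing the relevant restriction of $\phi$ is nonzero for every $\tilw\neq 1$ with $\tilw^{-1}\xi\neq\xi$ — but this is exactly the computation already carried out in the proof of Proposition \ref{p:uniquefunction}, so the real work is bookkeeping: setting up equivariant derived categories on algebraic stacks carefully, handling the pro-algebraic/infinite-dimensional nature of $V_{\bP}$-actions correctly, and making the ``extend by zero'' equivalence precise. I would also need to be slightly careful that $U$ is genuinely a single open point with trivial automorphisms (cited from the parahoric analog of \cite[Corollary 1.3(3)]{HNY}) so that $j$ is an open immersion and not merely a gerbe-like map; everything else is a routine descent argument.
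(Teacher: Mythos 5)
Your proposal is correct and follows essentially the same route as the paper: the paper reduces part (1) to a geometric point of $S$, invokes the Birkhoff decomposition \eqref{Birk}, and dismisses the strata with $\tilw^{-1}\xi\neq\xi$ by exactly the sheaf-theoretic analogue of the character/stability argument (the one-parameter subgroup $\lambda=m(\tilw^{-1}\xi-\xi)$ contradiction) from the proof of Proposition \ref{p:uniquefunction}, with part (2) then an immediate consequence of the support statement since $U$ is a trivial $\tLab_{\bP}\times V_{\bP}$-torsor. Your extra detail on the quasi-inverse in part (2) is just an unwinding of what the paper leaves implicit.
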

\begin{proof} (2) is an immediate consequences of (1). To show (1), it suffices to treat the case $S=\Spec\ \bark$, which corresponds to a point $\phi\in V^{*,\st}(\bark)$. We base change the situation to $\bark$ without changing notation. By the Birkhoff decomposition \eqref{Birk}, we need to show that any sheaf on $\Gss\backslash(\wt\Gamma_{0}\tilw\bP_{\infty})/\bP^{++}_{\infty}$ that are $(V_{\bP}, \phi^{*}\AS_{\psi})$-equivariant on the right must be zero. The argument is a straight-forward sheaf-theoretic analog of the argument given in Proposition \ref{p:uniquefunction}. 
\end{proof}

\subsection{Hecke operators}
For more details on the Satake equivalence and Hecke operators, we refer to \cite[\S2.3-2.4]{HNY}. In particular, our Satake category consists of weight zero complexes as normalized in \cite[Remark 2.10]{HNY} (note this involves the choice of a half Tate twist). Here we only set up the notation in order to state our main result.

The Hecke correspondence for $\Bun=\Bun(\Pss,\bP^{++}_{\infty})$ classifies $(x, \calE,\calE',\tau)$ where $x\in \tpline$, $\calE,\calE'\in\Bun$ and $\tau: \calE|_{\tX-\{x\}}\isom\calE'|_{\tX-\{x\}}$ is an isomorphism of $G$-torsors preserving the level structures at $0$ and $\infty$. Let $\pi:\Hk\to\tpline$ be the morphism that remembers only $x$, and let $\Hk_{x}$ be the fiber of $\pi$ over $x\in\tpline$. For fixed $x\in\tpline$, there is an evaluation map $\ev_{x}: \Hk_{x}\to [L^{+}_{x}\GG\backslash L_{x}\GG/L^{+}_{x}\GG]$ by recording the behavior of $\tau$ around $x$. For every $V\in\Rep(\dualG)$, the geometric Satake correspondence gives a perverse sheaf $\IC_{V}$ (normalized to be pure of weight zero) on the affine Grassmannian $\Gr_{x}=L_{x}\GG/L^{+}_{x}\GG$ of $\GG$ that descends to the stack $[L^{+}_{x}\GG\backslash L_{x}\GG/L^{+}_{x}\GG]$. The pullback $\ev_{x}^{*}\IC_{V}$ defines a complex on $\Hk_{x}$. When $x$ varies in $\tpline$, it can be shown that the complexes $\ev_{x}^{*}\IC_{V}$ can be glued to a complex $\IC^{\Hk}_{V}$ on $\Hk$, whose restriction to each fiber $\Hk_{x}$ is $\ev_{x}^{*}\IC_{V}$.

Let $S$ be a scheme of finite type over $k$. Consider the diagram
\begin{equation}\label{Hk}
\xymatrix{ & S\times \Hk\ar[dl]_{\id_{S}\times\oll{h}}\ar[dr]^{\id_{S}\times\orr{h}}\ar[rr]^{\pi} & & \tpline\\
S\times \Bun & & S\times \Bun}
\end{equation}
where $\oll{h}$ and $\orr{h}$ send $(x, \calE,\calE',\tau)$ to $\calE$ and $\calE'$  respectively. For each $V\in\Rep(\dualG)$, we define the geometric Hecke operator (relative to the base $S$) to be the functor
\begin{eqnarray}
\notag T^{V}_{S}&:&D^{b}_{c}(S\times \Bun)\to D^{b}_{c}(\tpline\times S\times \Bun)\\
\label{TV}&& A\mapsto (\pi\times\id_{S}\times\orr{h})_{!}\left((\id_{S}\times\oll{h})^{*}A\otimes\IC^{\Hk}_{V}\right).
\end{eqnarray}

\begin{defn} 
An {\em $S$-family of Hecke eigensheaves} is the data $(A, E_{\dG}, \{\iota_{V}\}_{V\in\Rep(\dualG)})$ where
\begin{itemize}
\item $A\in D^{b}(S\times \Bun)$;
\item $E_{\dG}$ is a $\dG$-local system on $\pline\times S$. We denote the pullback of $E_{\dG}$ to $\tpline\times S$ by $E_{\dualG}$, which is a $\dualG$-local system with a $\mu_{e}$-equivariant structure (with $\mu_{e}$ acting both on $\dualG$ via pinned automorphisms and on $\tpline$).
\item For each $V\in\Rep(\dualG)$, $\iota_{V}$ is an isomorphism over $\tpline\times S\times \Bun$
\begin{equation*}
\iota_{V}: T^{V}_{S}(A)\cong E^{V}_{\dualG}\otimes_{S}A,
\end{equation*}
where $E^{V}_{\dualG}$ is the local system associated to $E_{\dualG}$ and the representation $V$ of $\dualG$. Here $E^{V}_{\dualG}\otimes_{S}A$ means the pullback of $E^{V}_{\dualG}\boxtimes A$ along the diagonal map $\id_{\tX}\times\Delta_{S}\times\id_{\Bun}: \tpline\times S\times \Bun\to \tpline\times S\times S\times \Bun$. 
\end{itemize}
These data should satisfy the following conditions.
\begin{enumerate}
\item The isomorphisms $\iota_{V}$ should be compatible with the tensor structure of $\Rep(\dualG)$. For details, see \cite[p.163-164]{Gait}.
\item The isomorphisms $\iota_{V}$ should be compatible with the actions of $\mu_{e}$. In other words, for $V\in\Rep(\dualG)$, let $V^{\sigma}$ be the same vector space on which $\dualG$ acts via $\dualG\xrightarrow{\sigma}\dualG\to\GL(V)$. Let $\sigma_{\tpline}:\tpline\to\tpline$ be the action of $\mu_{e}$ on $\tpline$. Then we have a canonical isomorphism of functors $T^{V^{\sigma}}_{S}(-)\cong(\sigma_{\tpline}\times\id_{S\times\Bun})^{*}T^{V}_{S}(-)$ (coming from the $\mu_{e}$-equivariant nature of the Hecke correspondence) and an isomorphism of local systems $E^{V^{\sigma}}_{\dualG}\cong(\sigma_{\tpline}\times\id_{S})^{*}E^{V}_{\dualG}$ since $E_{\dualG}$ comes from the $\dG$-local system $E_{\dG}$. Then the following diagram is required to be commutative
\begin{equation*}
\xymatrix{T^{V^{\sigma}}_{S}(A)\ar[rr]^{\iota_{V^{\sigma}}}\ar[d]^{\wr} & & E^{V^{\sigma}}_{\dualG}\otimes_{S} A\ar[d]^{\wr}\\
(\sigma_{\tpline}\times\id_{S\times\Bun})^{*}T^{V}_{S}(A)\ar[rr]^{(\sigma_{\tpline}\times\id_{S})^{*}\iota_{V}} && (\sigma_{\tpline}\times\id_{S})^{*}E^{V}_{\dualG}\otimes_{S} A}
\end{equation*}
For details, see \cite[\S2.4]{HNY}.
\end{enumerate}
\end{defn}

\begin{theorem}\label{th:main} Let $S$ be a scheme over $V^{*,\st}_{\bP}$. The complex
$A(\chi, S)$ can be given the structure of an $S$-family of Hecke eigensheaves. We denote the corresponding eigen $\dG$-local system by $\Kl_{\dG, \bP}(\chi, S)$, which is a $\dG$-local system over $\pline\times S$.
\end{theorem}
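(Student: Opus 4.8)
The plan is to follow the template of \cite{HNY}, adapting it to the parahoric setting and to the presence of a base scheme $S$. The construction of the Hecke eigensheaf proceeds by a descent-and-normalization argument: one first shows that for every $V \in \Rep(\dualG)$ the Hecke operator $T^{V}_{S}$ takes $A(\chi, S)$ to something that is again cleanly supported (i.e.\ extended by zero from $U \times S$, up to the $\tpline$-direction), hence by Lemma \ref{l:clean}(2) is of the form $E^{V} \otimes_{S} A(\chi, S)$ for a well-defined complex $E^{V}$ on $\tpline \times S$; then one shows the $E^{V}$ are local systems and assemble, via the Tannakian formalism of \cite{Gait}, into a $\dualG$-local system $E_{\dualG}$ on $\tpline \times S$; finally one checks the $\mu_{e}$-equivariance so that $E_{\dualG}$ descends to a $\dG$-local system $E_{\dG} = \Kl_{\dG,\bP}(\chi,S)$ on $\pline \times S$.

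\textbf{Key steps, in order.} First I would record the equivariance: the action of $\tLab_{\bP} \times V_{\bP}$ on $\Bun$ commutes with the Hecke correspondence (the Hecke modification happens at a point $x \in \tpline$ disjoint from $0$ and $\infty$, where the level structures live), so $T^{V}_{S}$ preserves the equivariant category $\calD(\chi, S)$; thus $T^{V}_{S}(A(\chi,S)) \in D^{b}_{c}(\tpline)\text{-family of objects in }\calD(\chi,S)$, and Lemma \ref{l:clean}(1) forces its stalks to be supported on $\tpline \times U \times S$. Applying the equivalence in Lemma \ref{l:clean}(2) fiberwise over $\tpline$ produces a complex $E^{V}$ on $\tpline \times S$ with a canonical isomorphism $\iota_{V}\colon T^{V}_{S}(A(\chi,S)) \cong E^{V} \otimes_{S} A(\chi,S)$. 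Second, I would prove that $E^{V}$ is a local system (lisse on $\tpline \times S$) of rank $\dim V$: this is where one uses that $\star$ is an open point with trivial automorphisms, that $\IC_{V}$ is the Satake sheaf (weight zero, with the half-Tate-twist normalization of \cite[Remark 2.10]{HNY}), and a dimension/perversity estimate at the base point $\star$; concretely one computes the stalk of $T^{V}_{S}(A(\chi,S))$ at $(\star, x, s)$ via the evaluation map $\ev_{x}$ and the contraction principle, exactly as in \cite[\S2.5]{HNY} but now with the extra functional $\phi$ (coded by $\AS_{S}$) controlling which Schubert cells contribute. Third, the compatibility of the $\iota_{V}$ with the tensor structure of $\Rep(\dualG)$ follows from the fusion/convolution formalism for Hecke operators together with the monoidal structure of geometric Satake, verbatim as in \cite{Gait} and \cite[\S2.4]{HNY}; by Tannakian reconstruction this upgrades $V \mapsto E^{V}$ to a $\dualG$-local system $E_{\dualG}$ on $\tpline \times S$. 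Fourth, the $\mu_{e}$-equivariant structure: the cover $\tpline \to \pline$ and the pinned action of $\mu_{e}$ on $\GG$ make the whole Hecke setup $\mu_{e}$-equivariant, and since $A(\chi,S)$ is defined on $\Bun = \Bun_{G}(\Pss, \bP^{++}_{\infty})$ (over $\pline$, not $\tpline$) it is tautologically $\mu_{e}$-invariant; unwinding this gives the commuting square in the definition, hence $E_{\dualG}$ descends to a $\dG$-local system $\Kl_{\dG,\bP}(\chi,S)$ on $\pline \times S$.

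\textbf{Main obstacle.} The hard part is Step 2, showing $E^{V}$ is lisse of the expected rank and, more basically, that $T^{V}_{S}(A(\chi,S))$ is \emph{cleanly} supported so that Lemma \ref{l:clean}(2) applies on the nose (the lemma is about objects of $\calD(\chi,S)$, and one must check $T^{V}_{S}(A(\chi,S))$, which a priori is only a constructible complex on $\tpline \times (\text{something in } \calD(\chi,S))$, genuinely lands there and has no stalk contributions from the non-open Birkhoff strata). In the Iwahori case \cite{HNY} this rests on a careful analysis of the geometry of the relevant affine Schubert varieties and their intersection with the open stratum; here the analogous analysis must be carried out for the parahoric $\bP$ and must be uniform in the base point $s \in S = V^{*,\st}_{\bP}$, using the stability of $\phi$ (finite stabilizer in $L_{\bP}$) in the same way it was used in the proof of Proposition \ref{p:uniquefunction} and Lemma \ref{l:clean} to kill the non-unit double cosets. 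Once cleanness and the rank computation are in place, the remaining steps are formal consequences of the machinery already cited, so I would spend the bulk of the write-up on this geometric input and treat the Tannakian and equivariance steps briefly by reference to \cite{Gait} and \cite{HNY}.
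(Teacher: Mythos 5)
Your overall skeleton matches the paper's proof: Hecke operators preserve the $(\tLab_{\bP},\calL_{\chi})$- and $(V_{\bP}\times S,\AS_{S})$-equivariant structures, so $T^{V}_{S}$ maps $\calD(\chi,S)$ to $\calD(\chi,\tpline\times S)$, and Lemma \ref{l:clean} applied to $S'=\tpline\times S$ immediately produces $E^{V}_{S}$ and the eigen-isomorphism $\iota_{V}$; tensor compatibility and the $\mu_{e}$-equivariance are then formal. Note, however, that the ``cleanness'' you single out as the main obstacle is not an extra geometric problem here: once equivariance of the Hecke operators is observed, Lemma \ref{l:clean}(1) kills all non-open Birkhoff strata for \emph{any} object of the equivariant category, with no Schubert-cell analysis needed. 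The genuine work in the theorem is elsewhere, namely in showing that $E^{V}_{S}$ is lisse, and there your proposal has two problems. First, for the lissity along $\tpline$ (i.e.\ for $S$ a geometric point) the paper does not compute stalks via $\ev_{x}$ and a contraction principle; it argues as in \cite[\S4.1--4.2]{HNY} that $E^{V}[1]$ is perverse and then uses the tensor structure $E^{V\otimes V'}\cong E^{V}\otimes E^{V'}$ to force lissity. The perversity rests on a concrete geometric input you do not identify: the map $\pi\times\orr{h}\colon \oll{h}^{-1}(U)\to\tpline\times\Bun$ is affine, because $U$ is the non-vanishing locus of a line bundle whose pullback to $\Gr_{x}$ is relatively ample since the relative Picard group of $\Gr_{x}$ is $\ZZ$ (Faltings). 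An explicit stalk computation with $\phi$ controlling the contributing cells is essentially the hard computation the paper is structured to avoid, and it is not clear it can be carried out in general.

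Second, and this is the genuine gap, you have no mechanism for lissity in the $S$-direction. Saying the analysis ``must be uniform in $s\in S$'' does not produce one: even knowing that $E^{V}_{S}\vert_{\tpline\times\{s\}}$ is lisse of rank $\dim V$ for \emph{every} geometric point $s$ does not imply that $E^{V}_{S}$ is lisse on $\tpline\times S$ (a constructible sheaf can be fiberwise lisse of constant rank while having non-invertible specialization maps transverse to the fibers), and the perversity-plus-tensor argument is specific to curves and does not extend over a higher-dimensional base. The paper's key additional idea is Tannakian: for a specialization $t\rightsquigarrow s$ in $S$, the specialization maps $\textup{sp}^{V}_{s\to t}\colon E^{V}_{s}\to E^{V}_{t}$ assemble into a morphism between the fiber functors $V\mapsto E^{V}_{s}$ and $V\mapsto E^{V}_{t}$ on $\Rep(\dualG)$, hence are isomorphisms by \cite[Proposition 1.13]{DM}; since this holds for all specializations, $E^{V}_{S}$ is lisse along $S$. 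Without this step (or a substitute for it), your argument only yields a family of local systems on the fibers $\tpline\times\{s\}$, not the $S$-family of Hecke eigensheaves and the $\dG$-local system on $\pline\times S$ asserted by the theorem — which is precisely the new content relative to \cite{HNY}.
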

\begin{proof} We abbreviate $A(\chi, S)$ by $A$. 
We first prove that $T^{V}_{S}(A)\cong E^{V}_{S}\otimes_{S} A$  for some {\em complex} $E^{V}_{S}$ on $\pline\times S$. Note that the Hecke operators preserve the $(\tLab_{\bP},\chi)$ and $(V_{\bP}\times S,\AS_{S})$-equivariant structures, hence they give functors
\begin{equation*}
T^{V}_{S}:\calD(\chi,S)\to \calD(\chi, \tpline\times S).
\end{equation*}
Applying  Lemma \ref{l:clean}(2) to $S'=\tpline\times S$, objects on the right side above are of the form $(\id_{\tpline}\times S\times j)_{!}(\calL_{\chi}\boxtimes\AS_{S}\otimes\pr^{*}_{S}E^{V}_{S})\cong E^{V}_{S}\otimes_{S}A$ for some $E^{V}_{S}\in D^{b}_{c}(\tpline\times S)$.

By the construction of the Hecke operators and its compatibility with the geometric Satake correspondence, the assignment $V\mapsto E^{V}_{S}$ carries associativity and commutativity constraints for tensor functors (with respect to the sheaf-theoretic tensor product of $E^{V}_{S}$ and $E^{V'}_{S}$). 

It remains to show that $E^{V}_{S}$ is a local system. We first consider the case where $S$ is a geometric point. The argument in this case is the same as in the Iwahori level case treated in \cite[\S4.1-4.2]{HNY}: one first argues that $E^{V}_{S}[1]$ is a perverse sheaf, and then use the tensor structure of $V\mapsto E^{V}_{S}$ to show that it is indeed a local system. Let us only mention one key point in proving that $E^{V}[1]$ is perverse, that is to show an analogue of \cite[Remark 4.2]{HNY}: the map $\pi\times\orr{h}: \oll{h}^{-1}(U)\subset \Hk\to \tpline\times\Bun$ is affine. For this we only need to argue that for any $x\in \tpline(R)$ where $R$ is a finitely generated $k$-algebra, the preimage of $U$ in the affine Grassmannian $\Gr_{x}$ over $R$ is affine. Note that $U$ is the non-vanishing locus of some line bundle $\calL$ on $\Bun$. The pullback of $\calL$ to $\Gr_{x}$ has to be ample relative to the base $\Spec R$ because the relative Picard group of $\Gr_{x}$ is $\ZZ$ by \cite[Corollary 12]{Falt}. Therefore the preimage of $U$ in $\Gr_{x}$ is affine. 

We then treat the general case. Since the Hecke operators $T^{V}_{S}$ commute with pullback along a base change map $S'\to S$, so does the formation of $E^{V}_{S}$. Therefore, for every geometric point $s\in S$, $E^{V}_{S}|_{\tpline\times\{s\}}$ is isomorphic to $E^{V}_{s}$, which is a local system on $\tpline\times\{s\}$ by the argument of the previous paragraph. Let $t\in S$ be a geometric point that specializes to another geometric point $s$, then we have the specialization map $\textup{sp}^{V}_{s\to t}:E^{V}_{s}\to E^{V}_{t}$. Note that $E^{V}_{s}$ and $E^{V}_{t}$ are plain vector spaces of dimension $\dim V$. As $V\in\Rep(\dualG)$ varies, the functors $V\mapsto E^{V}_{s}$ and $V\mapsto E^{V}_{t}$ are fiber functors of the rigid tensor category $\Rep(\dualG)$ and the maps $\{\textup{sp}^{V}_{s\to t}\}_{V}$ gives a morphism of tensor functors. Therefore each $\textup{sp}^{V}_{s\to t}$ is an isomorphism by \cite[Proposition 1.13]{DM}. This being true for every pair of specialization $(t,s)$, $E^{V}_{S}$ is a local system over $S$. The proof is complete.
\end{proof}

Let $\grot$ be the one-dimensional torus acting simply transitively on $\tpline$. It also acts on $\pline$ via $e$th power. It also acts on every standard parahoric subgroup of the loop groups $L_{0}G$ and $L_{\infty}G$, hence on $L_{\bP}$ and $V_{\bP}$. We denote action of $\l\in\grot$ on various spaces by $\l\cdot_{\rot}(-)$.

Viewing $L_{\bP}$ as the Levi factor of $\bP_{\infty}$, there is an action of $L_{\bP}\rtimes\grot$ on $\Bun$ where $L_{\bP}$ changes the $\bP^{++}_{\infty}$-level structures. When $S=V^{*,\st}_{\bP}$, this action can be extended to every space in the Hecke correspondence diagram \eqref{Hk} by making $L_{\bP}\rtimes\grot$ act on $S=V^{*,\st}_{\bP}$ in the natural way, such that all maps in \eqref{Hk} are $L_{\bP}\rtimes\grot$-equivariant.  Consequently, we have

\begin{lemma}\label{l:Kl des}
The $\dG$-local system $\tKl_{\dG, m}(\chi, V^{*,\st}_{\bP})$ over $\pline\times V^{*,\st}_{\bP}$ is equivariant under the action of $L_{\bP}\rtimes\grot$ on $\pline\times V^{*,\st}_{\bP}$. Here $\grot$ acts diagonally on $\pline\times V^{*,\st}_{\bP}$. Therefore, $\Kl_{\dG,\bP}(\chi, V^{*,\st}_{\bP})$ descends to an $L_{\bP}$-equivariant $\dualG$-local system  over $V^{*,\st}_{\bP}$, equipped with a $\mu_{e}$-equivariant structure (which acts on both $\dualG$ and on $V^{*,\st}_{\bP}$ via the embedding $\mu_{e}\incl\grot$). We denote this $\dualG$-local system on $V^{*,\st}_{\bP}$ by $\Kl_{\dualG, \bP}(\chi)$.
\end{lemma}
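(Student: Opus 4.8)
The plan is to exploit the $L_{\bP}\rtimes\grot$-equivariance of the entire Hecke eigensheaf package over $S=V^{*,\st}_{\bP}$, and then descend along the $\grot$-action. First I would make precise the equivariant structure: since $\grot$ acts simply transitively on $\tpline$, the diagonal action of $\grot$ on $\tpline\times V^{*,\st}_{\bP}$ is free with quotient identified with $V^{*,\st}_{\bP}$ (by fixing, say, the coordinate $t^{1/e}=1$ to produce a section $V^{*,\st}_{\bP}\hookrightarrow\tpline\times V^{*,\st}_{\bP}$). All the spaces and maps in the Hecke correspondence diagram \eqref{Hk} with $S=V^{*,\st}_{\bP}$ carry $L_{\bP}\rtimes\grot$-actions making the maps equivariant — the rotation torus acts on $\Bun$ through its action on $\bP^{++}_{\infty}$-level structures and on $\tpline$, while $L_{\bP}$ rescales the $\bP^{++}_{\infty}$-structure and acts linearly on $V^{*,\st}_{\bP}$; the defining complex $A(\chi,V^{*,\st}_{\bP})=(j\times\id)_{!}(\calL_{\chi}\boxtimes\AS_{S})$ inherits an equivariant structure because $j$ and the character/Artin–Schreier sheaves are equivariant (the stable locus being preserved, and the pairing $\langle,\rangle$ being $L_{\bP}$-invariant up to the natural action on $V^{*}_{\bP}$).

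Next I would upgrade the conclusion of Theorem \ref{th:main} to the equivariant setting: because $T^{V}_{S}$ is built entirely from $L_{\bP}\rtimes\grot$-equivariant operations and $A(\chi,S)$ is equivariant, the eigen-isomorphisms $\iota_{V}$ can be chosen compatibly with the group action, so the local system $E^{V}_{S}$ — hence the Tannakian package defining $\Kl_{\dG,\bP}(\chi,V^{*,\st}_{\bP})$ — is $L_{\bP}\rtimes\grot$-equivariant as a $\dG$-local system on $\pline\times V^{*,\st}_{\bP}$, with $\grot$ acting diagonally (through $e$th power on $\pline$). This is essentially the content of the first sentence of Lemma \ref{l:Kl des}; the verification is that equivariance of all inputs to the construction (correspondence, $\IC^{\Hk}_{V}$, the kernel $A$) propagates through the pushforward-tensor-pullback formula \eqref{TV}, which is formal once one knows the individual pieces are equivariant.

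Then I would perform the descent. Since $\grot$ acts freely on $\tpline\times V^{*,\st}_{\bP}$ — equivalently, via the $e$th-power map, freely on $\pline\times V^{*,\st}_{\bP}$ — a $\grot$-equivariant $\dualG$-local system on $\pline\times V^{*,\st}_{\bP}$ is the same thing as a $\dualG$-local system on the quotient $(\pline\times V^{*,\st}_{\bP})/\grot$. Choosing the coordinate section $\{t=1\}\times V^{*,\st}_{\bP}\xrightarrow{\sim}(\pline\times V^{*,\st}_{\bP})/\grot$ identifies this quotient with $V^{*,\st}_{\bP}$, and the residual $L_{\bP}$-action (which commutes with $\grot$) descends to an $L_{\bP}$-equivariant structure on the resulting $\dualG$-local system over $V^{*,\st}_{\bP}$; the residual $\mu_{e}$-equivariant structure, coming from the $\dG=\dualG\rtimes\mu_{e}$-structure and the embedding $\mu_{e}\hookrightarrow\grot$, likewise descends (with $\mu_{e}$ now acting on $\dualG$ by pinned automorphisms and on $V^{*,\st}_{\bP}$ via $\mu_{e}\hookrightarrow\grot$, i.e.\ through its natural scaling of the $\theta$-representation). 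This produces the asserted $\dualG$-local system $\Kl_{\dualG,\bP}(\chi)$ on $V^{*,\st}_{\bP}$, and its pullback along $\pline\times V^{*,\st}_{\bP}\to V^{*,\st}_{\bP}$ recovers the $\mu_{e}$-twisted version of $\Kl_{\dG,\bP}(\chi,V^{*,\st}_{\bP})$.

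The main obstacle I expect is not any single hard estimate but rather the bookkeeping of equivariant structures: one must check that the Hecke-eigen isomorphisms $\iota_{V}$ can be rigidified to be $(L_{\bP}\rtimes\grot)$-equivariant \emph{compatibly with the tensor constraints}, so that the Tannakian reconstruction yields a genuine equivariant $\dG$-local system rather than merely a compatible system of equivariant sheaves $E^{V}$. This requires that the space of such rigidifications be nonempty and (for uniqueness of the descended object) suitably rigid; here one uses that the relevant automorphism groups are controlled — $\Bun$ has a canonical base point $\star$ fixed appropriately, and the open locus $U$ is a torsor, so the equivariant structure on $A(\chi,S)$ is unique up to the scalars already present, which then pins down $\iota_{V}$ up to the tensor-compatibility. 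Once that is in place, the descent along the free $\grot$-action is standard descent theory for local systems and presents no difficulty.
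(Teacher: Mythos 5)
Your proposal matches the paper's own (sketched) argument: the paper simply observes that the $L_{\bP}\rtimes\grot$-action on $\Bun$ and on $S=V^{*,\st}_{\bP}$ extends to every space in the Hecke diagram \eqref{Hk} so that all maps, the kernel $A(\chi,S)$ and the eigensheaf data are equivariant, and then descends along the $\grot$-action exactly as you do. One small slip worth noting: the diagonal $\grot$-action on $\pline\times V^{*,\st}_{\bP}$ is \emph{not} free in general (each point of $\pline$ has stabilizer $\mu_{e}$, which need not act freely on $V^{*,\st}_{\bP}$); the correct statement --- which your argument in fact uses --- is that the action on $\tpline\times V^{*,\st}_{\bP}$ is free, and the residual $\mu_{e}$ is precisely what supplies the $\mu_{e}$-equivariant structure on the descended $\dualG$-local system $\Kl_{\dualG,\bP}(\chi)$.
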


For a stable linear functional $\phi:V_{\bP}\to k$, viewed as a point $\phi\in V^{*,\st}_{\bP}(k)$, the notation $\Kl_{\dualG, \bP}(\chi,\phi)$ and $\Kl_{\dG,\bP}(\chi,\phi)$ is defined as in Theorem \ref{th:main} by taking $S$ to be the $\Spec\ k\xrightarrow{\phi}V^{*,\st}_{\bP}$.

A direct consequence of Theorem \ref{th:main} and Lemma \ref{l:Kl des} is
\begin{cor}\label{c:Klphi} For a stable linear functional $\phi:V_{\bP}\to k$, the $\dG$-local system $\Kl_{\dG,\bP}(\chi,\phi)$ over $\pline$ is the global Langlands parameter attached to the automorphic representation $\pi(\chi,\phi)$ in Proposition \ref{p:uniquefunction}. Moreover, $\Kl_{\dualG, \bP}(\chi,\phi)$ is isomorphic to the pullback of $\Kl_{\dualG, \bP}(\chi)$ along the map $a_{\phi}: \tpline\cong\grot\to V^{*,\st}_{\bP}$ given by $\l\mapsto \l\cdot_{\rot}\phi$.
\end{cor}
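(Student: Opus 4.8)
The statement has two parts, which I would prove separately. For the first, the plan is to pass from the Hecke eigensheaf $A(\chi,\phi)$ of Theorem \ref{th:main} to an automorphic form via the sheaf--function dictionary and then invoke the uniqueness established in Proposition \ref{p:uniquefunction}; this is the automorphic-to-Galois direction and follows the pattern of \cite{HNY}. For the second part, the plan is purely formal: combine the base-change compatibility of the Hecke operators (already used in the proof of Theorem \ref{th:main}) with the $\grot$-equivariance of Lemma \ref{l:Kl des}.

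For part (1): first I would note that, since $\phi\in V^{*,\st}_{\bP}(k)$ and $\chi$ are defined over $k$, every ingredient of the construction of $A:=A(\chi,\phi)$ --- the Kummer sheaf $\calL_{\chi}$, the Artin--Schreier sheaf $\AS_{\phi}=\jiao{-,\phi}^{*}\AS_{\psi}$, the open immersion $j:U\cong\tLab_{\bP}\times V_{\bP}\incl\Bun$ and the base point $\star$ --- carries a natural Weil structure, hence so do $A$ and its eigen local system $\Kl_{\dG,\bP}(\chi,\phi)$. Let $f=\tr(\Frob,A)$ be the associated function on $\Bun(k)$. Using Lemma \ref{l:clean}(1) (the stalks of $A$ vanish off $U$) together with the $(\tLab_{\bP},\calL_{\chi})$- and $(V_{\bP},\AS_{\phi})$-equivariance, $f$ is nonzero, supported on the $\tLab_{\bP}\times V_{\bP}$-orbit of $\star$, and $(\wt\bP_{0},\chi)\times(\bP^{+}_{\infty},\psi\phi)$-equivariant; via the Birkhoff description of $\Bun_{G}(\wt\bP_{0},\bP^{+}_{\infty})(k)$ used in the proof of Proposition \ref{p:uniquefunction}, $f$ is thus a nonzero element of the one-dimensional space $\calA$ of that proposition, which is spanned by a form in the multiplicity-one cuspidal representation $\pi(\chi,\phi)$. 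Next I would check that taking Frobenius traces turns the Hecke-eigen isomorphisms $\iota_{V}$ into the classical Hecke-eigenform identities, using the compatibility of geometric Satake with the Satake isomorphism normalized as in \cite[\S2.3--2.4]{HNY}. It then follows that $f$ is a Hecke eigenform whose eigenvalue at each closed point $x$ of $\pline$ is governed by $\tr(\Frob_{x},\Kl_{\dG,\bP}(\chi,\phi)^{V}_{x})$; since a $\dG$-local system is determined by the induced $\Rep(\dualG)$-fiber functor together with its $\mu_{e}$-action (as in the proof of Theorem \ref{th:main}, via \cite[Proposition 1.13]{DM}), this says precisely that $\Kl_{\dG,\bP}(\chi,\phi)$ is the global Langlands parameter of $\pi(\chi,\phi)$.

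For part (2): I would use that the Hecke operators $T^{V}_{S}$, hence the formation of $A(\chi,S)$ and of $\Kl_{\dG,\bP}(\chi,S)$, commute with base change $S'\to S$ (shown inside the proof of Theorem \ref{th:main}). Applied to $\Spec k\xrightarrow{\phi}V^{*,\st}_{\bP}$ this gives $\Kl_{\dG,\bP}(\chi,\phi)\cong\Kl_{\dG,\bP}(\chi,V^{*,\st}_{\bP})|_{\pline\times\{\phi\}}$, and after pulling back to $\tpline$, $\Kl_{\dualG,\bP}(\chi,\phi)\cong E_{\dualG}|_{\tpline\times\{\phi\}}$, where $E_{\dualG}$ is the pullback of $\Kl_{\dG,\bP}(\chi,V^{*,\st}_{\bP})$ to $\tpline\times V^{*,\st}_{\bP}$. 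By Lemma \ref{l:Kl des} the sheaf $E_{\dualG}$ is $\grot$-equivariant for the diagonal action and $\Kl_{\dualG,\bP}(\chi)$ is its descent along $\tpline\times V^{*,\st}_{\bP}\to V^{*,\st}_{\bP}$, which is an isomorphism because $\grot$ acts simply transitively on $\tpline$. Finally I would identify the composite of $i_{\phi}:\tpline\incl\tpline\times V^{*,\st}_{\bP}$, $\l\mapsto(\l,\phi)$, with this quotient map: under the identification $\tpline\cong\grot$ it is the orbit map $a_{\phi}$, whence $\Kl_{\dualG,\bP}(\chi,\phi)\cong a_{\phi}^{*}\Kl_{\dualG,\bP}(\chi)$, compatibly with the $\mu_{e}$-equivariant structures since $\mu_{e}\incl\grot$ and the equivariance of Lemma \ref{l:Kl des} is $\mu_{e}$-equivariant.

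The main obstacle is the bookkeeping in part (1): verifying precisely that the geometric Hecke-eigen property of $A(\chi,\phi)$ is the sheaf-theoretic shadow of $f$ being a classical Hecke eigenform with the correct Satake parameters, and articulating what ``the global Langlands parameter attached to $\pi$'' means here --- which, since no extra wild-ramification input is needed beyond what the construction already encodes at $0$ and $\infty$, reduces to the comparison at unramified places together with Tannakian reconstruction. This is entirely parallel to \cite{HNY}, so the difficulty is careful transcription rather than a new idea; part (2) is routine once the base-change and equivariance statements are granted.
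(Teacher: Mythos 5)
Your proposal is correct and follows essentially the route the paper intends: the paper states this corollary as a direct consequence of Theorem \ref{th:main} and Lemma \ref{l:Kl des}, with part (1) implicitly resting on exactly the sheaf-to-function dictionary argument you spell out (the trace function of $A(\chi,\phi)$ lands in the one-dimensional space $\calA$ of Proposition \ref{p:uniquefunction}, and the eigen-isomorphisms $\iota_{V}$ become the classical Satake eigenvalue identities), and part (2) on base change plus the $\grot$-equivariant descent, as you argue. No discrepancies worth noting.
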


The case considered in \cite{HNY} is $\bP=\bI$. In this case, $L_{\bI}=\SS$ and $V_{\bI}=\oplus_{i=0}^{r} V_{\bI}(\alpha_i)$ for affine simple roots $\{\alpha_i\}$ of $G$. If $G$ is adjoint, we may identify the quotient $[V^{*, \st}_{\bI}/\SS]$ with $\GG_m$ (via projection to the factor $V_{\bP}(\alpha_{0})^{*}$). So in this case the Kloosterman sheaf $\Kl_{\dualG,\bI}(\chi)$ descends to a $\dualG$-local system on $\GG_m$, without any dependence on the functional $\phi$.

\subsection{Calculation of the local system}\label{ss:cal}

The moduli stack $\Bun_{G}(\wt\bP_{0},\bP_{\infty}^{+})$ has a unique point which has trivial automorphism group. We denote the corresponding $G$-bundle over $X$ with level structure by $\calE$. For $x\in\pline$, let $\frG$ be the automorphisms of $\calE|_{X-\{1\}}$ preserving the level structures at $0$ and $\infty$. This is a group ind-scheme over $k$. Now consider $1\in\tX$, a preimage of  $1\in X$. To distinguish them we denote the $1\in\tX$ by $\wt{1}$. Evaluating an automorphism $g\in\frG$ in the formal neighborhood of $\wt{1}$ (note $\calE$ gives a $\GG$-bundle over $\tX$) gives a morphism
\begin{equation*}
\ev_{\wt{1}}: \frG\to L^{+}_{\wt{1}}\GG\backslash L_{\wt{1}}\GG/L^{+}_{\wt{1}}\GG.
\end{equation*}
We shall abbreviate $L_{\wt{1}}\GG$ and $L^{+}_{\wt{1}}\GG$ by $L\GG$ and $L^{+}\GG$. 

Let $\lambda\in\xcoch(\TT)$ be a dominant coweight of $\GG$, which defines an affine Schubert variety $\Gr_{\leq\l}$ in the affine Grassmannian $\Gr=L\GG/L^{+}\GG$. Let $\frG_{\leq\l}$ be the preimage of $L^{+}\GG\backslash \Gr_{\leq\l}\subset L^{+}\GG\backslash L\GG/L^{+}\GG$ under $\ev_{\wt{1}}$. The intersection complex $\IC_{\l}$ of $\frG_{\leq\l}$ corresponds, under the geometric Satake equivalence, to the irreducible representation $V_{\l}$ of $\dualG$ with highest weight $\l$.

We have two more evaluation maps, at $0$ and $\infty$
\begin{equation*}
(\ev_{0}, \ev_{\infty}): \frG_{\leq\l}\to \wt{\bP}_{0}\times\bP^{+}_{\infty}.
\end{equation*}
Composing with the projections $\wt{\bP}_{0}\to \tLab_{\bP}$ and $\bP^{+}_{\infty}\to V_{\bP}$, we get\begin{equation}\label{ff}
(f',f''):\frG_{\leq\l}\to \tLab_{\bP}\times V_{\bP}.
\end{equation}

\begin{prop}\label{p:Four} Let $\Four_{\psi}:D^{b}_{c}(V_{\bP})\to D^{b}_{c}(V^{*}_{\bP})$ be the Fourier-Deligne transform (without cohomological shift). We have
\begin{equation*}
\Kl^{V_\lambda}_{\dualG,\bP}(\chi)\cong \Four_{\psi}\left(f''_!(f'^*\calL_\chi\otimes\ev^{*}_{\wt{1}}\IC_\lambda)\right)|_{V_{\bP}^{*,\st}}.
\end{equation*}
\end{prop}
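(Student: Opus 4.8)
The plan is to unwind the definition of the Hecke eigensheaf $A(\chi, V^{*,\st}_\bP)$ at the open point $\star\in\Bun$ and identify the resulting stalk of the Hecke operator with the displayed Fourier transform. First I would recall from the proof of Theorem \ref{th:main} that, after restricting to the open substack $U\cong\tLab_\bP\times V_\bP$ via $j$, the equivariance under $(\tLab_\bP,\calL_\chi)$ and $(V_\bP\times S,\AS_S)$ forces every object of $\calD(\chi,S)$ to be determined by its restriction to $\star\times S$ (Lemma \ref{l:clean}); in particular the eigen-local system $E^{V_\lambda}_S$ appearing in $T^{V_\lambda}_S(A)\cong E^{V_\lambda}_S\otimes_S A$ is recovered, up to the identification of $U$ with the group, as the $*$-restriction of $T^{V_\lambda}_S(A)$ to the image of $\star$. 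So the entire content of the proposition is the computation of that one stalk.

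Next I would set $S = V^{*,\st}_\bP$ (or a geometric point $\phi$ thereof, by the base-change compatibility of $T^{V}_S$ already used at the end of the proof of Theorem \ref{th:main}, together with Corollary \ref{c:Klphi}) and analyze the Hecke correspondence fiber over $\tilde 1$. The key geometric input is that $\orr h^{-1}(\star)\cap\Hk_{\tilde 1}$, i.e.\ the space of $(\calE',\tau)$ with $\calE'$ the distinguished bundle and $\tau$ a modification at $\tilde 1$ bounded by $\lambda$, is exactly a twisted form of the affine Schubert variety $\Gr_{\le\lambda}$, and that running through the left leg $\oll h$ records the new $G$-bundle $\calE$ together with its level structures at $0$ and $\infty$: this is precisely the group ind-scheme $\frG$ of automorphisms of $\calE|_{X-\{1\}}$, with $\frG_{\le\lambda}$ the preimage of the Schubert cell under $\ev_{\tilde 1}$, and the pair $(f',f'')$ of \eqref{ff} records how $\oll h(\calE)$ sits in $U\cong\tLab_\bP\times V_\bP$. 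Thus pulling $A(\chi,S)=j_!(\calL_\chi\boxtimes\AS_S)$ back along $\oll h$, tensoring with $\ev_{\tilde 1}^*\IC_\lambda = \IC^{\Hk}_{V_\lambda}|_{\Hk_{\tilde 1}}$, and pushing forward along $\orr h$ (which over the point $\star$ becomes the projection $\frG_{\le\lambda}\to\pt$ followed by remembering the modification data, i.e.\ the pairing with $V^*_\bP$) yields exactly $f''_!(f'^*\calL_\chi\otimes\ev^*_{\tilde 1}\IC_\lambda)$ pushed to $V_\bP$ and then to a point over $S=V^*_\bP$ via the Artin--Schreier kernel $\AS_S$ — which is the definition of $\Four_\psi$. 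Restricting to the stable locus gives the claimed formula.

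The step I expect to be the main obstacle is the precise matching of the modification space with $\frG_{\le\lambda}$ together with the bookkeeping of the $\grot$- and $\mu_e$-equivariance: one must check that the point $\calE$ appearing here is really the distinguished point of $\Bun_G(\wt\bP_0,\bP^+_\infty)$ (unique with trivial automorphisms, as recalled in \S\ref{ss:cal}), that the modification is taken at a single geometric preimage $\tilde 1$ of $1$ rather than its whole $\mu_e$-orbit, and that the resulting identification intertwines the Satake sheaf $\IC^{\Hk}_{V_\lambda}$ with $\ev^*_{\tilde 1}\IC_\lambda$ compatibly with the $\mu_e$-equivariant structure built into $\Kl_{\dualG,\bP}(\chi)$ in Lemma \ref{l:Kl des}. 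Once this dictionary is in place, the remaining identification of the $*$-pushforward with $\Four_\psi$ is formal: it is the standard fact that the Artin--Schreier-averaged pushforward along $V_\bP\to\pt$ over the base $V^*_\bP$ computes the Fourier--Deligne transform, and the equality $A_!=A_*$ on the stable locus from Lemma \ref{l:clean}(\ref{Aclean}) guarantees we may use $f''_!$ as written.
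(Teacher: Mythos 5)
Your proposal follows essentially the same route as the paper's proof: restrict the eigensheaf identity to the point $(\wt{1},\star)$, identify the part of the Hecke fiber over $(\wt{1},\star)$ lying above $U$ with $\frG_{\leq\l}$ so that $\oll{h}$ becomes $(f',f'')$, and recognize the resulting pushforward against the Artin--Schreier kernel over $S=V^{*,\st}_{\bP}$ as the Fourier--Deligne transform. The only small misattribution is that the $!$ in $f''_{!}$ comes from the $!$-pushforward in the definition \eqref{TV} of the Hecke operator via proper base change, not from the cleanness $A_{!}=A_{*}$ of Lemma \ref{l:clean}(\ref{Aclean}); this does not affect the argument.
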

\begin{proof}
We shall work with the base $S=V^{*,\st}_{\bP}$. 
By construction, $\Kl^{V_{\l}}_{\dualG,\bP}(\chi)$ is the restriction of $T^{V_{\l}}_{S}(A(\chi, S))$ to the point $\{\wt{1}\}\times S\times\{\star\}\subset \tpline\times S\times \Bun$. Let $\Hk_{1}\subset\Hk$ be the preimage of $(\wt{1},\star)$ under $(\pi, \orr{h})$, and $\Hk^{U}_{1}\subset\Hk_{1}$ be the preimage of $U$ under $\oll{h}_{1}$ (restriction of $\oll{h}$). By proper base change and the definition of the Hecke operators \eqref{TV}, we have
\begin{equation}\label{KlS}
\Kl^{V_{\l}}_{\dualG,\bP}(\chi)=\pr_{S,!}((\id_{S}\times\oll{h}_{1})^{*}\AS_{S}\otimes\IC^{\Hk_{1}}_{\l}).
\end{equation}
where
\begin{equation*}
S\times U\xleftarrow{\id_{S}\times\oll{h}_{1}}S\times\Hk^{U}_{1}\xrightarrow{\pr_{S}}S
\end{equation*}
and $\IC^{\Hk_{1}}_{\l}$ is the restriction of $\IC^{\Hk}_{\l}$ to $\Hk_{1}$. By the definition of $\Hk$,  $\Hk^{U}_{1}$ classifies isomorphisms pairs $(\calE',\tau)$ where $\calE'\in U$ and $\tau$ is an isomorphism between $\star|_{\tpline-\{\wt{1}\}}$ and $\calE'|_{\tpline-\{\wt{1}\}}$ preserving the $\Pss$ and $\bP^{++}_{\infty}$-level structures. Note that both $\star$ and $\calE'\in U$ maps to the open point $\calE$ in $\Bun_{G}(\wt\bP_{0}, \bP^{+}_{\infty})$, therefore we have an isomorphism $\iota:\frG\cong\Hk^{U}_{1}$ sending an automorphism $\alpha$ of $\calE|_{\tpline-\{\wt{1}\}}$ (viewed as an automorphism of $\star|_{\tpline-\{\wt{1}\}}$ not necessarily preserving the $\Pss$ and $\bP^{++}_{\infty}$-level structures) to the pair $(\calE'_{\alpha}, \alpha)\in\Hk^{U}_{1}$ where $\calE'_{\alpha}$ is the unique point in $U$ obtained by modifying the $\Pss$ and $\bP^{++}_{\infty}$-level structures of $\star$ so that $\alpha$ becomes an isomorphism between $\star|_{\tpline-\{\wt{1}\}}$ and $\calE'_{\alpha}|_{\tpline-\{\wt{1}\}}$ preserving the $\Pss$ and $\bP^{++}_{\infty}$-level structures. Moreover, under $\iota$, $\oll{h}_{1}:\Hk^{U}_{1}\to U$ is identified with $(f',f'')$ defined in \eqref{ff}. Therefore, \eqref{KlS} implies
\begin{eqnarray*}
\Kl^{V_{\l}}_{\dualG,\bP}(\chi)&\cong&\pr_{S,!}(f'^{*}\calL_{\chi}\otimes(\id_{S}\times f'')^{*}\AS_{S}\otimes\ev^{*}_{\wt{1}}\IC_{\l})\\
&=&\pr_{V^{*}_{\bP},!}(f'^{*}\calL_{\chi}\otimes(\id_{V^{*}_{\bP}}\times f'')^{*}\jiao{\cdot,\cdot}^{*}\AS_{\psi}\otimes\ev^{*}_{\wt{1}}\IC_{\l})|_{S}
\end{eqnarray*}
Using proper base change, the above is further isomorphic to
\begin{eqnarray*}
\pr_{V^{*}_{\bP},!}\left((\id_{V^{*}_{\bP}}\times f'')_{!}(f'^{*}\calL_{\chi}\otimes\ev^{*}_{\wt{1}}\IC_{\l})\otimes\jiao{\cdot,\cdot}^{*}\AS_{\psi}\right)|_{S}=\Four_{\psi}\left(f''_{!}(f'^{*}\calL_{\chi}\otimes\ev^{*}_{\wt{1}}\IC_{\l})\right)|_{S}.
\end{eqnarray*}
\end{proof}

\section{Unipotent monodromy}
The goal of this section is to study the monodromy of the Kloosterman sheaves $\Kl_{\dualG,\bP}(\chi,\phi)$ at $0$, especially when $\chi=1$.

\subsection{Lusztig's theory of (two-sided) cells}

Let $\Wa$ be the affine Weyl group attached to $G(K)$. There is a partition of $\Wa$ into finitely many {\em two-sided cells}. There is a partial order among these cells. The largest cell is the singleton $\{1\}$; the smallest one contains the longest element $w_0$ in the Weyl group $W_{\bQ}$ of a special parahoric subgroup $\bQ$. Each cell $\unc$ is assigned an integer $a(\unc)$ satisfying $0\leq a(\unc)\leq \min\{\ell(w)|w\in\unc\}$.

Let $\dualG^{\sigma,\circ}$ be the neutral component of the fixed point of $\sigma$ on $\dualG$.
 
\begin{theorem}[Lusztig {\cite[IV, Theorem 4.8]{cells}}] \label{th:Lu}
There is an order preserving bijection
\begin{equation*}
\{\textup{two-sided cells in }\Wa\}\longleftrightarrow\{\textup{unipotent conjugacy classes in }\dualG^{\sigma,\circ}\}
\end{equation*}
such that if $\unc\leftrightarrow\unu$, then $a(\unc)=\dim\calB_u$ where $\calB_u$ is the Springer fiber of $u\in\unu$ (inside the flag variety of $\dualG^{\sigma,\circ}$).
\end{theorem}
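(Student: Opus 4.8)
Since this is Lusztig's theorem, the plan is to invoke \cite{cells}; for orientation I will only sketch the structure of that proof. One works inside the Iwahori--Hecke algebra $\calH$ of $(\Wa,L)$ (with $L$ its standard weight function) equipped with the Kazhdan--Lusztig basis $\{C_w\}$, and uses Lusztig's $a$-function $a:\Wa\to\ZZ_{\geq0}$, defined via the $v$-adic growth of the structure constants $h_{x,y,z}$ in that basis. The formalism of \cite[I--III]{cells} shows that $a$ is constant along two-sided cells — giving the well-defined integer $a(\unc)$ — and elementary estimates on the $h_{x,y,z}$ give $0\leq a(\unc)\leq\min\{\ell(w):w\in\unc\}$. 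Passing to the asymptotic ring $J=\bigoplus_{\unc}J_{\unc}$ of \cite{cells}, each piece $J_{\unc}$ becomes a based ring, and the task is reduced to identifying the set of blocks with the set of unipotent classes in $\dualG^{\sigma,\circ}$ compatibly with the partial orders.

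The geometric heart of the matter is the realization of the affine Hecke algebra (specialized at $v=1$) on equivariant $K$-theory of the Steinberg variety of the dual group, under which the two-sided cell filtration of $J$ corresponds to the filtration of the dual nilpotent cone by orbit closures; this produces the order-preserving bijection $\unc\leftrightarrow\unu$. In our quasi-split setting the spectral side must be taken to be $\dualG^{\sigma,\circ}$, so one uses the versions of the Steinberg-variety picture and of the Springer correspondence for this (possibly disconnected) reductive group, whose Weyl group is precisely the finite Weyl group underlying $\Wa$. The numerical identity $a(\unc)=\dim\calB_u$ is then Lusztig's computation of $a(\unc)$ in terms of the Springer representation attached to $\unu$ with trivial coefficient system, whose fake-degree ($b$-) invariant equals $\dim\calB_u$. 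As a consistency check, the largest cell $\{1\}$ has $a=0=\dim\calB_{u_{\mathrm{reg}}}$, while the lowest cell, containing the longest element $w_0$ of $W_{\bQ}$ for a special parahoric $\bQ$, has $a=\ell(w_0)=\#\Phi^{+}=\dim\calB_{1}$, matching the regular and trivial unipotent classes respectively.

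The main obstacle — and the reason \cite[I--IV]{cells} is long — is establishing the positivity and boundedness properties of $\calH$ and $J$ in the affine case (Lusztig's properties $\mathrm{P1}$--$\mathrm{P15}$ in the needed generality), which underlie both the based-ring structure of $J$ and the $K$-theoretic comparison. For equal parameters these follow from the purity of the intersection cohomology of Schubert varieties in affine flag manifolds; for the unequal-parameter (twisted) cases that occur here one uses the corresponding geometry attached to the quasi-split group $G$. A secondary technical point specific to our situation is keeping careful track of the component group of $\dualG^{\sigma}$ when invoking the Springer correspondence, so that the count of two-sided cells matches the count of $\dualG^{\sigma,\circ}$-orbits of unipotent elements on the nose.
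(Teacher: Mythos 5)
This is a theorem that the paper itself does not reprove: it is quoted from \cite{cells}, and the only argument the paper supplies is the remark immediately following it, handling the quasi-split case. Your sketch of Lusztig's split-group proof (the $a$-function and its boundedness, the asymptotic ring $J$, the $K$-theoretic/Steinberg-variety identification of blocks of $J$ with unipotent classes, and $a(\unc)=\dim\calB_u$ via the Springer representation with trivial local system) is an accurate outline of what \cite{cells} does, and in that respect you and the paper agree: both ultimately just invoke Lusztig for split groups.

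Where your proposal goes astray is in the quasi-split case, which is exactly the point the paper has to address. You propose to rerun the Steinberg-variety and Springer machinery for the (possibly disconnected) group $\dualG^{\sigma}$ and to appeal to an ``unequal-parameter (twisted)'' version of the positivity properties P1--P15 attached to the quasi-split group $G$. That route is not available in the cited reference: \cite{cells} treats only the equal-parameter (split) case, and the unequal-parameter analogues of P1--P15 are not established in the generality you would need, so as written this step would fail rather than merely be technical. It is also unnecessary: the cells in the statement are cells of $\Wa$ as an abstract Coxeter group (equal parameters), and the paper's Remark \ref{r:cell qsplit} disposes of the issue in one line by observing that $\Wa$ is isomorphic, as a Coxeter group, to the affine Weyl group of a split group $G'$ which can be chosen so that $\dualG'=\dualG^{\sigma,\circ}$; the split case of Lusztig's theorem then applies verbatim, with no disconnected-group Springer correspondence and no component-group bookkeeping needed. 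Your consistency checks (the cell $\{1\}$ versus the regular class, and the cell of $w_0\in W_{\bQ}$ with $a=\ell(w_0)=\dim\calB_1$ versus the trivial class) are correct and harmless, but the quasi-split reduction is the one genuinely new point here, and your version of it should be replaced by the Coxeter-group isomorphism argument.
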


\begin{remark}\label{r:cell qsplit} Lusztig's paper only dealt with the case of split $G$. However, for our quasi-split $G$, the affine Weyl group $\Wa$ is isomorphic to the affine Weyl group of a split group $G'$ as Coxeter groups. Moreover, one can choose $G'$ so that $\dualG'=\dualG^{\sigma,\circ}$. Therefore the quasi-split case of the above theorem follows from the split case.
\end{remark}

\begin{defn}\label{def:unu} For a standard parahoric subgroup $\bP\subset G(K)$, let  $\unc_{\bP}$ be the two-sided cell in $\Wa$ containing the longest element $w_{\bP}\in W_{\bP}$ (the Weyl group of $L_{\bP}$). We define $\unu_{\bP}$ to be the unipotent class of $\dualG^{\sigma,\circ}$ corresponding to $\unc_{\bP}$ under Theorem \ref{th:Lu}.
\end{defn}

\begin{lemma}\label{l:aw}For any standard parahoric $\bP$, we have $a(\unc_\bP)=\ell(w_{\bP})$. \end{lemma}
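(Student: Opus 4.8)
The plan is to prove the two inequalities $a(\unc_\bP)\le\ell(w_\bP)$ and $a(\unc_\bP)\ge\ell(w_\bP)$ separately. The first is free: by Definition \ref{def:unu} we have $w_\bP\in\unc_\bP$, so the bound $a(\unc_\bP)\le\min\{\ell(w)\mid w\in\unc_\bP\}$ recorded alongside Theorem \ref{th:Lu} immediately gives $a(\unc_\bP)\le\ell(w_\bP)$. For the reverse inequality I would invoke the constancy of Lusztig's $a$-function on two-sided cells of $\Wa$ (established in \cite{cells}; by Remark \ref{r:cell qsplit} we may freely replace $G$ by the split group whose affine Weyl group is $\Wa$), so that $a(\unc_\bP)=a(w_\bP)$, and then estimate $a(w_\bP)$ by a direct computation in the Iwahori--Hecke algebra $\calH$ of $\Wa$.

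The computation is the following. Since $\bP$ is a parahoric, $W_\bP$ is a \emph{finite} standard parabolic subgroup of $\Wa$ and $w_\bP$ is its longest element; hence every Kazhdan--Lusztig polynomial $P_{y,w_\bP}$ with $y\le w_\bP$ is $1$ (true in the finite group $W_\bP$ because the Schubert variety attached to a longest element is smooth, and unchanged when computed in $\Wa$ by parabolic compatibility of KL polynomials). Writing $q=v^2$, the Kazhdan--Lusztig basis element is therefore $C'_{w_\bP}=v^{-\ell(w_\bP)}\sum_{y\in W_\bP}T_y$. Using the standard identity $\big(\sum_{y\in W_\bP}T_y\big)T_s=q\sum_{y\in W_\bP}T_y$ for each simple reflection $s$ of $W_\bP$, one gets $\big(\sum_{y\in W_\bP}T_y\big)^2=\big(\sum_{y\in W_\bP}v^{2\ell(y)}\big)\sum_{z\in W_\bP}T_z$, whence
\[
C'_{w_\bP}\cdot C'_{w_\bP}=\Big(v^{-\ell(w_\bP)}\sum_{y\in W_\bP}v^{2\ell(y)}\Big)\,C'_{w_\bP}.
\]
So the structure constant is $h_{w_\bP,w_\bP,w_\bP}=v^{-\ell(w_\bP)}\sum_{y\in W_\bP}v^{2\ell(y)}$, which has a pole of order exactly $\ell(w_\bP)$ at $v=0$. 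By the definition of the $a$-function as the smallest $a\ge 0$ with $v^{a}h_{x,y,z}\in\ZZ[v]$ for all $x,y$, specializing to $x=y=z=w_\bP$ forces $a(w_\bP)\ge\ell(w_\bP)$. Combining with the upper bound gives $a(\unc_\bP)=a(w_\bP)=\ell(w_\bP)$.

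There is no serious obstacle here; the content is essentially importing from \cite{cells} that the $a$-function is well defined, bounded, and constant on two-sided cells of $\Wa$, plus the one-line Hecke-algebra identity above. The only things needing care are bookkeeping: fixing conventions for the Kazhdan--Lusztig basis (the $C'$ versus $C$ normalization and $v=q^{1/2}$ — passing between them only replaces $h_{x,y,z}$ by $\pm\,\overline{h_{x,y,z}}$ and does not affect the pole order), and recording that $W_\bP$ is genuinely a finite standard parabolic of $\Wa$, which holds because parahoric subgroups correspond bijectively to facets of the building, i.e. to proper subsets $J$ of the affine simple reflections with $W_J$ finite.
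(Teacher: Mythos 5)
Your proposal is correct and follows essentially the same route as the paper: the lower bound comes from the identity $C_{w_\bP}^2=\bigl(q^{-\ell(w_\bP)/2}\sum_{y\in W_\bP}q^{\ell(y)}\bigr)C_{w_\bP}$ in the Iwahori--Hecke algebra of $\Wa$, and the upper bound from the general inequality $a(w)\le\ell(w)$, together with constancy of $a$ on two-sided cells. The only differences are presentational: you spell out the Kazhdan--Lusztig computation via $P_{y,w_\bP}=1$ and work with the abstract Hecke algebra of $\Wa$ (so you avoid the paper's reduction to simply-connected $G$), which is fine.
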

\begin{proof}
We may assume $G$ is simply-connected. Let $a=\ell(w_{\bP})$. Let $\bH:=C_{c}(\bI\backslash G(K)/\bI)$ be the Iwahori Hecke algebra and let $\{C_{w}\}_{w\in\Wa}$ be its Kazhdan-Lusztig basis. 
By definition, $a(\unc_{\bP})$ is equal to $a(w_{\bP})$, which is the largest power of $q^{1/2}$ appearing in the coefficient of $C_{w_{\bP}}$ in $C_{x}C_{y}$ for some $x,y\in\Wa$. Since $C_{w_{\bP}}^{2}=f(q^{1/2})C_{w_{\bP}}$ for $f(q^{1/2})=q^{-a/2}\sum_{x\in W_{\bP}}q^{\ell(x)}$ whose highest degree term is $q^{a/2}$, we have $a(w_{\bP})\geq a$. On the other hand, one always has $a(w_{\bP})\leq \ell(w_{\bP})=a$ by \cite[II, Proposition 1.2]{L}, therefore $a(\unc_{\bP})=a(w_{\bP})=a$. 
\end{proof}

We now give an alternative description of the unipotent class $\unu_{\bP}$. To state it, we recall the truncated induction defined by Lusztig \cite{LL} in a special case, which was first constructed by Macdonald \cite{M}. Let $W=\WW^{\sigma}$ be the $F$-Weyl group of $G$ (this is also $W_{\bQ}$ for a special parahoric $\bQ$, and is also the Weyl group of $\dualG^{\sigma,\circ}$), then $\fra=\xcoch(\TT)^{\sigma}_{\CC}$ is the reflection representation of $W$. The sign representation $\ep$ of $W_{\bP}$ appears in $\Res^{W}_{W_{\bP}}\Sym^{\ell(w_{\bP})}(\fra)$ with multiplicity one. The truncated induction $j^{W}_{W_{\bP}}(\ep)$ is the unique irreducible $W$-submodule of $\Sym^{\ell(w_{\bP})}(\fra)$ that contains the sign representation of $W_{\bP}$.

\begin{prop}\label{p:unip ind} Let $\bP$ be a standard parahoric subgroup of $G(K)$. \begin{enumerate}
\item Under the Springer correspondence, $j^{W}_{W_{\bP}}(\ep)$ corresponds to the unipotent class $\unu_{\bP}$ of $\dualG^{\sigma,\circ}$ together with the trivial local system on it.
\item Suppose $\bP\subset\bQ$ for some special parahoric $\bQ$, then $\bP$ corresponds to a standard parabolic subgroup $P\subset L_{\bQ}$, hence also to a standard parabolic subgroup $\widehat{P}\subset\dualG^{\sigma,\circ}$. Then $\unu_{\bP}$ is the Richardson class attached to $\widehat{P}$ (i.e., the unipotent class in $\dualG^{\sigma,\circ}$ that contains a dense open subset of the unipotent radical of $\widehat{P}$).
\end{enumerate}
\end{prop}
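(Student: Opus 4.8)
The plan is to reduce both statements to the theory of cells and Springer theory for the finite Weyl group $W$, using the fact (Theorem \ref{th:Lu}, together with Lemma \ref{l:aw}) that $\unu_{\bP}$ is characterized among unipotent classes in $\dualG^{\sigma,\circ}$ by $\dim\calB_{u}=\ell(w_{\bP})$ and by the membership of $w_{\bP}$ in the corresponding two-sided cell. For part (1), I would appeal to Lusztig's description of the bijection between two-sided cells in $\Wa$ and unipotent classes via the correspondence with families of irreducible representations of $W$: the two-sided cell $\unc_{\bP}$ containing $w_{\bP}\in W_{\bP}$ is attached, under Lusztig's parametrization of cells of $\Wa$ in terms of ``special'' representations of $W$, to the representation $j^{W}_{W_{\bP}}(\epsilon)$ of $W$; this is precisely the content of \cite[IV]{cells} combined with the fact from \cite[II, Proposition 1.2]{L} that $a(w_{\bP})=\ell(w_{\bP})$, which forces the representation attached to the cell to be the one generated by the sign representation of $W_{\bP}$ in lowest possible degree, i.e. $j^{W}_{W_{\bP}}(\epsilon)$. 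Once the cell $\unc_{\bP}$ is matched with the $W$-representation $j^{W}_{W_{\bP}}(\epsilon)$, the Springer correspondence identifies $j^{W}_{W_{\bP}}(\epsilon)$ with a pair $(\unu,\mathcal{L})$; one then checks that $\mathcal{L}$ is the trivial local system (the truncated induction of the sign character lands in the subspace of $\Sym^{\bullet}(\fra)$ spanned by the class of the Springer fiber, which is exactly the Springer representation with trivial local system coefficient), and that the resulting $\unu$ has $\dim\calB_{u}=\ell(w_{\bP})$, matching $a(\unc_{\bP})$ by Lemma \ref{l:aw}; hence this $\unu$ must be $\unu_{\bP}$.

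For part (2), when $\bP\subset\bQ$ with $\bQ$ special, $W_{\bP}$ is the Weyl group of a standard parabolic $P\subset L_{\bQ}\cong\dualG^{\sigma,\circ}$ (both groups having the same Weyl group $W$ with reflection representation $\fra$). The key classical fact I would invoke is Borho--MacPherson's description of the Richardson orbit in terms of $j$-induction: if $\widehat{P}$ has Levi $\widehat{M}$ with Weyl group $W_{\bP}$, then the Springer representation attached to the Richardson class of $\widehat{P}$ (with trivial local system) is exactly $j^{W}_{W_{\bP}}(\epsilon_{W_{\bP}})$. Combining this with part (1) and the Springer correspondence being a bijection, $\unu_{\bP}$ is forced to be the Richardson class of $\widehat{P}$. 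So part (2) is a formal consequence of part (1) plus the $j$-induction characterization of Richardson orbits.

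I expect the main obstacle to be part (1): correctly matching the two-sided cell $\unc_{\bP}$ of the \emph{affine} Weyl group $\Wa$ with the representation $j^{W}_{W_{\bP}}(\epsilon)$ of the \emph{finite} Weyl group $W$, i.e. unwinding exactly how Lusztig's parametrization of cells in \cite{cells} attaches a (special) representation of $W$ to each two-sided cell, and verifying that the cell through the longest element of a parabolic $W_{\bP}\subset W\subset\Wa$ receives the truncated induction $j^{W}_{W_{\bP}}(\epsilon)$ rather than some other representation in the same family. This is where one must be careful about the interplay between the $a$-function on $\Wa$, the $a$-function on $W$, and the ``lowest two-sided cell'' phenomena; the equality $a(\unc_{\bP})=\ell(w_{\bP})$ from Lemma \ref{l:aw} is the crucial rigidifying input, because $j^{W}_{W_{\bP}}(\epsilon)$ is characterized as the unique constituent of $\Sym^{\ell(w_{\bP})}(\fra)$ containing $\epsilon_{W_{\bP}}$ upon restriction, and its $a$-value (fake-degree) equals $\ell(w_{\bP})$. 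Identifying the trivial local system in the Springer correspondence and ruling out nontrivial local systems is a secondary subtlety, handled by the standard fact that truncated inductions of one-dimensional characters always carry the trivial local system. Everything else — part (2), the reductions to simply-connected $G$ and to split $G'$ as in Remark \ref{r:cell qsplit} — should be routine.
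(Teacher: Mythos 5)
Your part (2) is fine modulo part (1): the identification of the Springer representation of a Richardson class with the $j$-induced sign character is the Lusztig--Spaltenstein/Borho--MacPherson result, and this is essentially how the paper handles (2) (it cites \cite{LS}). The problem is part (1), where your pivotal step is asserted rather than proved. You claim that Lusztig's parametrization of two-sided cells of $\Wa$ ``in terms of special representations of $W$'' attaches the representation $j^{W}_{W_{\bP}}(\ep)$ to the cell $\unc_{\bP}$; but no such parametrization exists: by Theorem \ref{th:Lu} the two-sided cells of $\Wa$ are in bijection with \emph{all} unipotent classes of $\dualG^{\sigma,\circ}$, which are in general strictly more numerous than the special representations of $W$, and indeed $j^{W}_{W_{\bP}}(\ep)$ and the classes $\unu_{\bP}$ occurring in the tables need not be special. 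So the statement you are quoting is, in effect, the proposition itself. Your fallback rigidification via $a(\unc_{\bP})=\ell(w_{\bP})=\dim\calB_{u}$ cannot close this gap either, because $\dim\calB_{u}$ does not determine a unipotent class: for instance in $\SL_{6}$ the classes with Jordan types $(3,1,1,1)$ and $(2,2,2)$ both have $\dim\calB_{u}=6$. Hence the concluding ``this $\unu$ must be $\unu_{\bP}$'' is a non sequitur.

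What is missing is a comparison between the cell $\unc_{\bP}$ and the Springer class $\unu$ of $j^{W}_{W_{\bP}}(\ep)$ that goes beyond numerics, and the paper supplies it by a closure relation obtained from affine Hecke algebra modules: one picks an irreducible $\bH$-module $M$ in the cell subquotient attached to $\unc_{\bP}$ with $M^{\bH_{\bP}}\neq 0$; by the construction of the bijection in Theorem \ref{th:Lu}, its Kazhdan--Lusztig (Langlands) parameter $(u',s,\rho)$ has $u'\in\unu_{\bP}$; applying Lusztig's \cite[IV, Lemma 7.2]{cells} to the twisted module $\leftexp{*}{M}$, which contains the sign representation of $\bH_{\bP}$, yields $u'\in\overline{\unu}$. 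Only then does the dimension count enter: $u'\in\overline{\unu}$ together with $\dim\calB_{u'}=a(\unc_{\bP})=\ell(w_{\bP})=\dim\calB_{u}$ forces $\unu=\unu_{\bP}$, since a proper degeneration would strictly increase the Springer fiber dimension. This closure input (or some substitute for it) is exactly what your argument lacks; without it, part (1), and hence your deduction of part (2), does not go through.
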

\begin{proof}
Remark \ref{r:cell qsplit} allows us to reduce to the case $G$ is split. 

(1) Let $\unu$ (together with the trivial local system on it) be the unipotent class in $\dualG$ that corresponds to $j_{W_{\bP}}^{W}(\ep)$ under the Springer correspondence. Springer theory tells that $\dim\calB_{u}=\ell(w_{\bP})$ for any $u\in\unu$.

Recall $\bH:=C_{c}(\bI\backslash G(K)/\bI)$ is the Iwahori Hecke algebra and let $\bH_{\bP}:=C(\bI\backslash \bP/\bI)$ be its parahoric subalgebra. Let $M$ be an irreducible $\bH$-module such that $M^{\bH_{\bP}}\neq0$ and that $C_{w}$ acts on $M$ by zero for any $w$ lying in cells $\unc\leq\unc_{\bP}$. Such an $\bH$-module can be found in the cell subquotient $\bH_{\unc_{\bP}}$. The Langlands parameter of $M$ is a triple $(u',s,\rho)$ where $u'$ is a unipotent class in $\dualG$. By the construction of Lusztig's bijection Theorem \ref{th:Lu}, $u'\in\unu_{\bP}$. Consider the $\bH$-module $\leftexp{*}{M}$ (changing the action of $T_{s}$ to $-qT^{-1}_{s}$ for simple reflections $s\in\Wa$, see \cite[IV \S1.8]{cells}), then $\leftexp{*}{M}$ contains a copy of the sign representation of $\bH_{\bP}$.
Applying \cite[IV, Lemma 7.2]{cells} to $\leftexp{*}{M}$, we conclude that $u'$ lies in the closure of $\unu$. Since we also have $\dim\calB_{u'}=a(\unc_{\bP})=\ell(w_{\bP})$ by Theorem \ref{th:Lu} and Lemma \ref{l:aw}, we must have $\unu=\unu_{\bP}$.

(2) follows from \cite[Theorem 3.5]{LS}.
\end{proof}

The main result of this section is
\begin{theorem}\label{th:unip} Assume $\chi=1$ and $G$ is split. Then for any stable functional $\phi:V_{\bP}\to k$, the local monodromy of the $\dualG$-local system $\Kl_{\dualG, \bP}(1,\phi)$ at $0$ is tame and is given by the conjugacy class $\unu_{\bP}$.
\end{theorem}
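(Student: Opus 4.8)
The plan is to compute the monodromy of $\Kl_{\dualG,\bP}(1,\phi)$ at $0$ by relating it, via geometric Satake, to the cohomology of the fibers of the morphism $f''$ appearing in Proposition \ref{p:Four}, and then to identify that local monodromy representation using Lusztig's theory of cells. Concretely, for each $V_\lambda \in \Rep(\dualG)$, Corollary \ref{c:Klphi} and Proposition \ref{p:Four} express $\Kl^{V_\lambda}_{\dualG,\bP}(1,\phi)$ as the restriction to the $\grot$-orbit $a_\phi$ of the Fourier transform $\Four_\psi\big(f''_!(\ev^*_{\wt 1}\IC_\lambda)\big)$ (with $\chi=1$, so $\calL_\chi$ is trivial). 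Sending $\l\to 0$ along $a_\phi$ amounts to letting the point of $V^{*,\st}_\bP$ run to the boundary; since $\Four_\psi$ exchanges $0\in V^*_\bP$ with the constant sheaf on $V_\bP$, the nearby-cycles at $0$ are governed by the behavior of $f''_!(\ev^*_{\wt 1}\IC_\lambda)$ near $0\in V_\bP$, equivalently by the special fiber of $\frG_{\le\l}$ over $0\in V_\bP$.

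First I would set up the deformation: the $\grot$-action on $\Bun$ and on the Hecke diagram (described just before Lemma \ref{l:Kl des}) lets one degenerate the level structure at $\infty$ from $\bP^{++}_\infty$ to $\bP^+_\infty$; the fiber of $\Bun_G(\wt\bP_0,\bP^+_\infty)$-level geometry over the origin corresponds to replacing $\bP^+_\infty$ by the larger $\bP_\infty$, i.e.\ by the parahoric level structure itself. In sheaf-theoretic terms, the $!$-restriction of $\Kl^{V_\lambda}_{\dualG,\bP}(1)$ to $0$ should be computed by the analogous construction with the $\bP_\infty$-level, whose "$U$" is now an $L_\bP\times$(point)-torsor rather than an $L_\bP\times V_\bP$-torsor; the upshot is that the monodromy at $0$ is the monodromy of the local system $R f''_! \ev^*_{\wt 1}\IC_\lambda$ restricted to a neighborhood of $0\in V_\bP$, which by the equivariance descends to a representation of the "inertia at $0$" that factors through the Weyl group $W_\bP = W(L_\bP)$ acting on the relevant cohomology, exactly as in \cite[\S5]{HNY}. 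This identifies the tameness of the monodromy and reduces its computation to an $\bH$-module computation: the local monodromy at $0$ corresponds, under geometric Satake, to the $\dualG$-representation structure on $\bigoplus_\lambda \upH^*(\text{fiber of }\frG_{\le\l}\text{ over }0)$, which is an $\bH$-module generated by its $\bH_\bP$-sign-isotypic vector (cf.\ the proof of Proposition \ref{p:unip ind}(1)).

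Next I would match this with $\unu_\bP$. The $\bH$-module produced above is a standard module whose Langlands parameter has unipotent part in the cell $\unc_\bP$ (because the relevant vector is killed by $C_w$ for $w$ in strictly smaller cells, being $\bH_\bP$-sign-isotypic, exactly as in the proof of Proposition \ref{p:unip ind}(1)); combining with the dimension count $a(\unc_\bP)=\ell(w_\bP)$ from Lemma \ref{l:aw} and the Springer-theoretic description of $j^W_{W_\bP}(\ep)$ in Proposition \ref{p:unip ind}(1), the unipotent part must be precisely $\unu_\bP$ with trivial local system. Finally, one must check that the monodromy is genuinely unipotent of this type (not merely that its "unipotent part" is $\unu_\bP$): since $\chi=1$ and $G$ is split there is no semisimple contribution, the tame inertia at $0$ acts through a single unipotent element, and the $\bH$-module identification pins down its conjugacy class in $\dualG$ to be $\unu_\bP$.

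\medskip

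\emph{Main obstacle.} The hard part will be the first step: rigorously identifying the local monodromy at $0$ of $\Kl^{V_\lambda}_{\dualG,\bP}(1)$ with a nearby-cycles/specialization computation on the $\frG_{\le\l}$-side, and extracting from it an honest $\bH$-module rather than just a $W_\bP$-representation. In \cite{HNY} this was done for $\bP=\bI$, where $L_\bI=\SS$ is a torus and the bookkeeping is light; for general admissible $\bP$ one must control how the extra directions in $V_\bP$ (the affine root spaces with $\alpha(\xi)=1/m$) interact with the $\grot$-degeneration, verify that the cleanness statement of Lemma \ref{l:clean} persists in the limit, and ensure the resulting cohomology carries the full Hecke-algebra action compatibly with geometric Satake. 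Once the correct $\bH$-module is in hand, the identification with $\unu_\bP$ is essentially a repackaging of the arguments already given for Proposition \ref{p:unip ind}.
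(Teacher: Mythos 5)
Your strategy is genuinely different from the paper's, but as it stands it has a gap at exactly the point you flag as the ``main obstacle,'' and several of the intermediate claims are not correct. First, the degeneration step: the limit of the orbit map $a_{\phi}:\grot\to V^{*,\st}_{\bP}$, $\l\mapsto\l\cdot_{\rot}\phi$, as $\l\to0$ is \emph{not} the origin of $V^{*}_{\bP}$ --- the rotation torus acts trivially on part of $V^{*}_{\bP}$ (e.g.\ in the unitary case it only scales the $\Sym^{2}(M^{*}_{\ell})$-component), so the limit is a nonzero, non-stable functional, and the heuristic ``$\Four_{\psi}$ exchanges $0\in V^{*}_{\bP}$ with the constant sheaf'' does not apply. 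Moreover the cleanness of Lemma \ref{l:clean} is proved using stability of $\phi$ and fails on the boundary of $V^{*,\st}_{\bP}$, so you cannot simply ``verify that cleanness persists in the limit.'' The assertion that the monodromy at $0$ ``factors through $W_{\bP}$ acting on the relevant cohomology'' also conflates two different things: the local monodromy of $\Kl_{\dualG,\bP}(1,\phi)$ at $0$ is an element of $\dualG$ (to be shown tame and unipotent), not a $W_{\bP}$-representation, and nothing in your sketch establishes tameness or unipotence. Finally, the last step is not ``a repackaging of Proposition \ref{p:unip ind}'': that proposition is a purely Hecke-algebra/Springer-theoretic identification of the class $\unu_{\bP}$, and gives no mechanism relating the \emph{geometric} monodromy of the eigen-local system to a cell; even if you produced an $\bH$-module on $\bigoplus_{\l}\upH^{*}$ of special fibers of $\frG_{\leq\l}$, you would still need an argument identifying the image of a generator of tame inertia with the unipotent part of that module's Langlands parameter, and none is provided.

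For comparison, the paper's proof works entirely at the point $0$ and never uses the Fourier-transform description: it uses Hecke modifications at $0$ to produce a monoidal fiber functor $e:D^{b}_{c}(\bP_{0}\backslash L_{0}G/\bP_{0})\to D^{b}_{c}(\Vect)$ (possible because $\calD(\phi)\cong D^{b}_{c}(\Vect)$ by cleanness), proves $e$ is t-exact from the affineness of $\omega^{-1}(U)\cong\Gamma^{+}_{\infty}$ (Lemma \ref{l:exact}), and then feeds in Gaitsgory's central nearby-cycles functor $Z_{\bP_{0}}$, which carries a canonical tame unipotent $\calI_{0}$-action and satisfies $e\circ Z_{\bP_{0}}\cong\Kl_{\dualG,\bP}(\phi)|_{\Spec F^{s}_{0}}$ equivariantly --- this is where tameness and unipotence at $0$ come from. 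The identification of the unipotent class then goes through Lusztig's cell filtration: passing to the Serre quotient $\bcP$ and to the cell quotient $\calP_{\unc}$ with truncated convolution (using $a(\unc_{\bP})=\ell(w_{\bP})$, Lemma \ref{l:aw}, so that $w_{\bP}$ is a Duflo involution), one lands in Bezrukavnikov's category $\calA_{w_{\bP}}$, and his theorems in \cite{Be} say precisely that the resulting monodromy element lies in the unipotent class matched to $\unc_{\bP}$ under Lusztig's bijection (Theorem \ref{th:Lu}), i.e.\ in $\unu_{\bP}$. If you want to salvage your route, you would essentially have to reprove this input of \cite{Be} by hand on the $\frG_{\leq\l}$ side, which is a substantial missing idea rather than a technical verification.
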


The proof of the theorem relies on deep results of Lusztig and Bezrukavnikov on cells in affine Weyl groups, and occupies \S\ref{ss:Hk0} to \S\ref{ss:pf unip}. 

If $\bP=\bI$, then $\unc_{\bP}$ is the maximal cell, hence $\unu_{\bP}$ is the regular unipotent class. This case of the theorem was proved in \cite[Theorem 1(2)]{HNY}. Another extremal case is when $\bP$ is a special parahoric of $G(K)$ (this happens only when $G$ is an odd unitary group and $m=2$), the quasi-split analog of the above theorem should say that $\Kl_{\dualG,\bP}(1,\phi)$ is unramified at $0$, which is the case as we will see in Proposition \ref{p:um2}(2).

\subsection{Tables of unipotent monodromy}\label{ss:tables}
Proposition \ref{p:unip ind} allows us to determine the unipotent classes $\unu_{\bP}$ in $\dualG^{\sigma}$ in all cases. We shall assume $\chk$ is large so that $\bP$ corresponds to a unique regular elliptic number $m=m(\bP)$ of $(\WW,\sigma)$. We denote $\unu_{\bP}$ by $\unu_{m}$. For classical groups, the classification of admissible parahorics will be reviewed in \S\ref{s:u}-\S\ref{s:o}. The notation $\boxed{i}\times j$ denotes $j$ Jordan blocks each of size $i$.

\begin{tabular}{|l|l|l|l|}
\hline
$G$ & $m$ & $\unu_{m}$ & $\dualG^{\sigma}$  \\ \hline
$A_{n-1}$ & $n$ & $\boxed{n}$ & $A_{n-1}$ \\ \hline
$\leftexp{2}A_{n-1}, n$ odd & $m=\frac{2n}{d}$; $d|n$ & $\boxed{\frac{m}{2}}\times d$ & $B_{\frac{n-1}{2}}$ \\ 
 & $m=\frac{2(n-1)}{d}$; $\frac{n-1}{d}$ odd, $d<n-1$ & $\boxed{1}+\boxed{\frac{m}{2}}\times d$ &  \\ \hline
$\leftexp{2}A_{n-1}, n$ even & $m=\frac{2n}{d}$; $\frac{n}{d}$ odd & $\boxed{\frac{m}{2}-1}+\boxed{\frac{m}{2}}\times (d-2)+\boxed{\frac{m}{2}+1}$ & $C_{\frac{n}{2}}$ \\ 
 & $m=\frac{2(n-1)}{d}$; $d|n-1, d<n-1$ & $\boxed{\frac{m}{2}}\times (d-1)+\boxed{\frac{m}{2}+1}$ &  \\ \hline 
$B_{n}$ & $m=2n/d; d|n$ & $\boxed{m}\times d$ & $C_{n}$ \\ \hline
$C_{n}$ & $m=2n/d; d|n, d$ odd & $\boxed{m}\times(d-1)+\boxed{m+1}$ & $B_{n}$ \\  
 & $m=2n/d; d|n, d$ even & $\boxed{1}+\boxed{m-1}+\boxed{m}\times(d-2)+\boxed{m+1}$ & \\ \hline 
$D_{n}$ & $m=2n/d; d|n, d$ even & $\boxed{m-1}+\boxed{m}\times(d-2)+\boxed{m+1}$ & $D_{n}$ \\ 
 & $m=\frac{2(n-1)}{d}; d|n-1, d$ odd & $\boxed{1}+\boxed{m}\times(d-1)+\boxed{m+1}$ &  \\ \hline
$\leftexp{2}{D}_{n}$ & $m=2n/d; d|n, d$ odd & $\boxed{m-1}+\boxed{m}\times(d-1)$ & $B_{n-1}$ \\
& $m=\frac{2(n-1)}{d}; d|n-1, d$ even & $\boxed{1}+\boxed{m}\times d$ & \\
\hline
\end{tabular}

For exceptional groups, we use the tables in \cite[\S7.1]{GLRY} to list all regular elliptic numbers, and use Bala-Carter's notation \cite[\S13.1]{Carter} to denote unipotent classes. In many cases, the fact that $\dim\calB_{u}=\ell(w_{\bP})$ is already enough to determine the class $\unu_{\bP}$. In the remaining cases $G$ is always split. 
\begin{itemize}
\item When $\bP\subset\GG(\calO_{K})$, we use Proposition \ref{p:unip ind} to conclude that $\unu_{\bP}$ is the $S$-distinguished unipotent classes assigned to these admissible parahoric subgroups in \cite[\S7.3]{GLRY}. The weighted Dynkin diagram of the distinguished unipotent class $\unu_{\bP}$ is obtained by putting $0$ on simple roots in $L_{\bP}$ and putting $2$ elsewhere.
\item For those $\bP$ not contained in $\GG(\calO_{K})$, we first use the tables in \cite[\S7.1]{Lsp} to find the dimension of the truncated induction $j^{W}_{W_{\bP}}(\ep)$, and then use the tables in \cite[\S13.3]{Carter} to find the corresponding unipotent classes.
\end{itemize}

\begin{tabular}{|l|l|l|l||l|l|l|l||l|l|l|l|}
\hline
$G$ & $m$ & $\unu_{m}$ & $\dualG^{\sigma}$ & $G$ & $m$ & $\unu_{m}$ & $\dualG^{\sigma}$ & $G$ & $m$ & $\unu_{m}$ & $\dualG^{\sigma}$ \\ \hline

$E_{6}$ & $3$ & $2A_{2}+A_{1}$ & $E_{6}$ & 
   $\leftexp{3}{D}_{4}$ & 3 & $A_{1}$ & $G_{2}$ &
        $E_{8}$ & 2 & $4A_{1}$ & $E_{8}$ \\
& $6$ & $E_{6}(a_{3})$ & &
   & 6 & $G_{2}(a_{1})$ & &
	& 3 & $2A_{2}+2A_{1}$ &\\
& $9$ & $E_{6}(a_{1})$ & &
   & 12 & $G_{2}$ &  & 
   	& 4 & $2A_{3}$ &\\ \cline{5-8}
& $12$ & $E_{6}$ & &  
   $F_{4}$ & 2 & $A_{1}+\wt{A}_{1}$ & $F_{4}$ &
	& 5 & $A_{4}+A_{3}$ &\\ \cline{1-4}
$\leftexp{2}{E}_{6}$ & 2 & $A_{1}$ & $F_{4}$ &
   & 3 &  $\wt{A}_{2}+A_{1}$ & &
 	& 6 & $E_{8}(a_{7})$ &\\
& 4 & $A_{2}+\wt{A}_{1}$ &  &
   & 4 & $F_{4}(a_{3})$ &&
 	& 8 & $A_{7}$ &\\
& 6 & $F_{4}(a_{3})$ &  &
   & 6 & $F_{4}(a_{2})$ &&
 	& 10 & $E_{8}(a_{6})$ &\\
& 12 & $F_{4}(a_{1})$ &  &
   & 8 & $F_{4}(a_{1})$ &&
	& 12 & $E_{8}(a_{5})$ &\\
& 18 & $F_{4}$ & &  
   & 12 & $F_{4}$ && 
	& 15 & $E_{8}(a_{4})$ & \\ \cline{1-8}
$E_{7}$ & 2 & $4A_{1}$ & $E_{7}$ &
   $G_{2}$ & 2 & $\wt{A}_{1}$ & $G_{2}$ & 
	& 20 & $E_{8}(a_{2})$ &\\
& 6 & $E_{7}(a_{5})$ & & 
   & 3& $G_{2}(a_{1})$ & &
	& 24 & $E_{8}(a_{1})$ &\\
& 14 & $E_{7}(a_{1})$ & & 
   & 6 & $G_{2}$ & &
	& 30 & $E_{8}$ &\\ \cline{5-12}
& 18 & $E_{7}$ & \\ \cline{1-4}
\end{tabular}

\subsection{Conjectural description of the tame monodromy for general $\chi$}\label{ss:localmono}
Finally we give a conjectural description of the local monodromy of $\Kl_{\dG,\bP}(\chi,\phi)$ at $0$ for general $\chi:\tLab_{\bP}\to \Ql^{\times}$ and quasi-split $G$. First of all $\Kl_{\dG,\bP}(\chi, \phi)$ is tame at $0$. Recall the tame inertia $\calI^{t}_{0}\cong\varprojlim_{(n,p)=1}\mu_{n}(\bark)$ and in particular $\calI^{t}_{0}\surj \mu_{e}(k)$. Let $\xi\in\calI^{t}_{0}$ be a generator that  maps to $\sigma\in\mu_{e}$. We predict that under the monodromy representation $\calI^{t}_{0}\to\dG$ associated with $\Kl_{\dG,\bP}(\chi,\phi)$, $\xi$ should go to $(\kappa u, \sigma)\in\dualG^{\sigma}\times{\sigma}\subset\dG$ where $\kappa u$ is the Jordan decomposition into the semisimple part and unipotent part of an element in $\dualG^{\sigma}$. 

We first describe the semisimple part $\kappa$. Up to $\dualG$-conjugacy we may arrange that $\kappa\in\dualT$. By further $\dualT$-conjugation, only the image of $\kappa$ in the $\sigma$-coinvariants $\dualT_{\sigma}$ is well-defined. The character $k^{\times}\otimes\xcoch(\TT)^{\sigma}=\SS(k)\to \Lab_{\bP}\xrightarrow{\chi}\Ql^{\times}$ gives a homomorphism $\chi': k^{\times}\to\dualT_{\sigma}=\Hom(\xcoch(\TT)^{\sigma}, \Ql^{\times})$. Local class field theory identifies $k^{\times}$ with the tame inertia of $\Gal(F^{\textup{ab}}_{0}/F_{0})$ (where $F^{\textup{ab}}_{0}$ is a maximal abelian extension of $F_{0}$), hence there is a canonical surjection $\calI^{t}_{0}\surj k^{\times}$. Composing with $\chi'$ we get a homomorphism $\chi'':\calI^{t}_{0}\to\dualT_{\sigma}$. The image of $\kappa$ in $\dualT_{\sigma}$ should be $\chi''(\xi)$. This should be provable using similar techniques as in the proof of \cite[Proposition 5.4]{Ymotive}, based on results in \cite{Be}.

Next we describe the unipotent part $u$. Let $W_{\kappa,\aff}\subset\Wa$ be the subgroup generated by affine reflections whose vector part $\alpha\in\Phi$ is such that the composition $\Gm(k)\xrightarrow{\alpha^{\vee}}\SS(k)\to \Lab_{\bP}(k)\xrightarrow{\chi}\Ql^{\times}$ is trivial. It can be shown that $W_{\kappa,\aff}$ is an affine Coxeter group. In fact $W_{\kappa,\aff}$ can be identified with the affine Weyl group of the group dual to $\dualG^{\sigma,\circ}_{\kappa}$, the centralizer in $\dualG^{\sigma,\circ}$ of an element $\wt\kappa\in\dualT^{\sigma,\circ}$ lifting $\kappa\in\dualT_{\sigma}$. The longest element $w_{\bP}\in W_{\bP}$ belongs to $W_{\kappa,\aff}$, hence lies in a two-sided cell $\unc_{\kappa, \bP}$ of $W_{\kappa,\aff}$.  By Lusztig's Theorem \ref{th:Lu}, $\unc_{\kappa, \bP}$ corresponds to a unipotent class $\unu_{\kappa, \bP}$ of $\dualG^{\sigma,\circ}_{\kappa}$. The unipotent element $u$ should lie in $\unu_{\kappa,\bP}$. This should be provable, in the same way as Theorem \ref{th:unip}, using the results of Bezrukavnikov {\em et al.} in \cite{Be2}, which are monodromic analogue of his results in \cite{Be}.

\subsection{Hecke operators at $0$}\label{ss:Hk0} 
The rest of this section is devoted to the proof of Theorem \ref{th:unip}. We first set up some notation. Fix a stable functional $\phi:V_{\bP}\to k$. Since $\chi=1$, we simply denote the Kloosterman sheaf $\Kl_{\dualG,\bP}(1,\phi)$ by $\Kl_{\dualG, \bP}(\phi)$. Comparing a general split $G$ with its simply-connected cover $G^{\sc}$, it is easy to see that the local system $\Kl_{\dualG, \bP}(\phi)$ induces the local system $\Kl_{\widehat{G^{\sc}}, m}(\phi)$ via the map $\dualG\to \widehat{G^{\sc}}=\dualG^{\ad}$. Therefore it suffices to prove Theorem \ref{th:unip} for simply-connected $G$. 

In the sequel we shall base change all spaces to $\bark$ without changing notation. When $S$ is a point $\phi: \Spec\ \bark\to V^{*,\st}_{\bP}$, we denote the category of automorphic sheaves $\calD(1, S)$ by $\calD(\phi)$. Let $A(\phi)\in \calD(\phi)$ be the automorphic sheaf $A(1, S)$ in Lemma \ref{l:clean}(1), which is a Hecke eigensheaf with eigen local system $\Kl_{\dualG,\bP}(\phi)$ by Theorem \ref{th:main} .

Geometric Hecke operators on $\calD(\phi)$ that arise from modifying $G$-bundles at $0$ can be introduced using a diagram similar to \eqref{Hk}. For details we refer to \cite[\S5.1.1]{Ymotive}. We have a functor
\begin{equation*}
T_{0}(-,-):D^{b}_{c}(\bP_{0}\backslash L_{0}G/\bP_{0})\times \calD(\phi)\to \calD(\phi)
\end{equation*}
There is a monoidal structure on $D^{b}_{c}(\bP_{0}\backslash L_{0}G/\bP_{0})$ given by convolution of sheaves $(C_{1},C_{2})\mapsto C_{1}\conv{\bP_{0}}C_{2}$ with unit object $\delta_{\bP_{0}}$ given by the constant sheaf supported on the identity double coset. This monoidal structure is compatible with the composition of geometric Hecke operators: there is natural isomorphism of endo-functors on $\calD(\phi)$
\begin{equation*}
T_{0}(C_{1}, T_{0}(C_{2}, -))\cong T_{0}(C_{1}\conv{\bP_{0}}C_{2}, -)
\end{equation*}
which satisfies associativity. By Lemma \ref{l:clean}(1),  $\calD(\phi)\cong D^{b}_{c}(\Vect)$ with the inverse functor given by $(-)\otimes A(\phi)$,  therefore we have $T_{0}(C,A(\phi))\cong e(C)\otimes A(\phi)$ for some monoidal functor
\begin{equation*}
e: D^{b}_{c}(\bP_{0}\backslash L_{0}G/\bP_{0})\to D^{b}_{c}(\Vect).
\end{equation*}

One key idea leading to the proof of Theorem \ref{th:unip} is the following simple observation.
\begin{lemma}\label{l:exact} The functor $e$ is exact with respect to the perverse t-structure on the source and the usual t-structure on the target.
\end{lemma}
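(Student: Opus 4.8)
\textbf{Proof plan for Lemma \ref{l:exact}.} The plan is to show that $e$ preserves perversity by exhibiting it, up to the choice of the eigensheaf $A(\phi)$, as a composition of geometric operations each of which is known to be perverse-exact or at least $t$-exact on the relevant subcategory. The starting point is the identity $T_{0}(C, A(\phi))\cong e(C)\otimes A(\phi)$, which says $e(C)$ is the "coefficient" of the Hecke operator at $0$ applied to the clean eigensheaf $A(\phi)$. By Lemma \ref{l:clean}, $\calD(\phi)$ is equivalent to $D^{b}_{c}(\Vect)$ via $(-)\otimes A(\phi)$, with $A(\phi)=j_{!}(\AS_{\psi})$ (for $\chi=1$) the clean extension from the open point $U$; in particular $A(\phi)$ is, up to shift, perverse and irreducible. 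Thus it suffices to show that $T_{0}(C,-)$, as an endofunctor of $\calD(\phi)\simeq D^{b}_{c}(\Vect)$, carries the tautological perverse object (a one-dimensional vector space in degree $0$, transported to $A(\phi)$) to a perverse object whenever $C$ is perverse on $\bP_{0}\backslash L_{0}G/\bP_{0}$.

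The key steps, in order, would be the following. First, I would unwind $T_{0}(C, A(\phi))$ as a pushforward along the Hecke correspondence at $0$: restrict the correspondence to the locus lying over the open point $U\subset\Bun$ (on the ``outgoing'' leg $\orr{h}$), which by the Birkhoff decomposition \eqref{Birk} and cleanness (Lemma \ref{l:clean}(1)) is where all the support sits. Over that locus the left leg $\oll{h}$ becomes, essentially by the same argument as in Proposition \ref{p:Four}, identified with an evaluation map $\ev_{0}$ into the affine flag variety $\Fl_{0}=\bP_{0}\backslash L_{0}G$ (or rather a $\bP_0$-bundle over it), so that $e(C)\cong (\ev_{0})_{!}$ of $C$ twisted by the Artin--Schreier sheaf $\AS_{\psi}$ pulled back via the $V_{\bP}$-coordinate $f''$ and by $\phi$. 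Second, I would observe that $\ev_{0}$ restricted to this locus is a \emph{fibration} whose fibers are (ind-)affine spaces --- this is the parahoric analogue of the affineness statement already invoked in the proof of Theorem \ref{th:main} (the ``$\pi\times\orr h$ is affine'' point, resting on \cite[Corollary 12]{Falt}) --- so $(\ev_{0})_{!}$ with the appropriate shift is left $t$-exact, and with the shift chosen as in the normalization of geometric Satake it is in fact perverse-exact on the subcategory of objects that are themselves pulled back along an affine-space fibration. Third, I would handle the Artin--Schreier twist: tensoring a perverse sheaf on an affine space with $(f'')^{*}\phi^{*}\AS_{\psi}$ and then taking $!$-pushforward along the affine coordinate is the Fourier--Deligne transform, which is perverse-exact (Katz--Laumon), and this is exactly the mechanism that makes $(\ev_{0})_{!}$ land in a single perverse degree even though $\ev_0$ is not proper.

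The main obstacle I expect is the second step --- controlling the geometry of the map $\ev_{0}$ on the clean locus precisely enough to conclude $t$-exactness rather than mere boundedness. One has to check that over the open Birkhoff stratum the fibers of $\oll{h}$ (equivalently of $\ev_{0}$ after the twist) are genuinely affine spaces of the expected dimension, uniformly, so that dimension/codimension estimates match the perverse shift on both sides; the presence of the pro-unipotent radicals $\bP_{0}^{+}$ and $\bP_{\infty}^{++}$ in the level structure, and the fact that $L_{0}G$ is only an ind-scheme, mean this requires the careful bookkeeping of \cite[\S5.1.1]{Ymotive} rather than a one-line citation. A secondary subtlety is that $e$ is only claimed exact, not that it preserves irreducibility or semisimplicity; so one does not need the full strength of the decomposition theorem here, only that the composite ``affine-fibration pushforward then Fourier transform'' cannot create cohomology in two different perverse degrees --- which, once the fibration structure is pinned down, follows formally from left $t$-exactness of $(\ev_0)_!$ combined with the perverse-exactness of Fourier transform. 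Finally one reduces the monoidality-compatible statement to this pointwise one: since $e$ is monoidal and $D^{b}_{c}(\bP_{0}\backslash L_{0}G/\bP_{0})$ is generated as a triangulated category by the perverse $\IC$-sheaves of Schubert cells, exactness on those generators suffices.
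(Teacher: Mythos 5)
Your starting point is the right one: $e(C)$ is the stalk at the base point $\star$ of $T_{0}(C,A(\phi))$, and by proper base change it is computed as compactly supported cohomology over the part of $\Fl_{\bP_{0}}=L_{0}G/\bP_{0}$ lying over the clean locus $U$, with coefficients $C$ twisted by the pulled-back Artin--Schreier sheaf. But the mechanism you propose for concentration in a single degree does not work. Because $\phi$ is a \emph{fixed} covector, what your third step produces is the stalk at the single point $\phi$ of a Fourier transform, not the Fourier transform itself as a sheaf on $V^{*}_{\bP}$; perverse-exactness of $\Four_{\psi}$ therefore tells you nothing beyond what is true of an arbitrary perverse sheaf on $V^{*}_{\bP}$, and stalks of perverse sheaves spread over a whole range of degrees unless one knows lisse-ness near $\phi$ --- which is not available for an arbitrary perverse $C$ on $\bP_{0}\backslash L_{0}G/\bP_{0}$ (the eigen-property of Theorem \ref{th:main} concerns Hecke modifications at the moving point, not at $0$). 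Likewise, left t-exactness of $!$-pushforward along an affine morphism only gives vanishing of $\cohoc{i}{-}$ for $i<0$. So your plan delivers the lower cohomological bound and leaves the upper bound unproved; no combination of affine-morphism t-exactness and Fourier exactness at a fixed $\phi$ will produce it.

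The ingredient you are missing is cleanness, i.e.\ Lemma \ref{l:clean}(1), used through Verdier duality. The paper's proof writes $e(C)\cong\cohoc{*}{\omega^{-1}(U),\omega^{*}A(\phi)\otimes C}$, where $\omega:\Fl_{\bP_{0}}\to\Bun_{G}(\bP_{0},\bP^{++}_{\infty})$ is the uniformization map, and then (i) replaces $C$ by $\DD(C)$ and invokes $A_{!}=A_{*}$ to identify this with ordinary cohomology $\cohog{*}{\omega^{-1}(U),\omega^{*}A(\phi)\otimes C}$, and (ii) observes that $\omega^{-1}(U)$ is an affine scheme: by the Birkhoff decomposition \eqref{Birk} with the roles of $0$ and $\infty$ swapped, $\omega^{-1}(U)\cong\Gamma^{+}_{\infty}=(\Res^{k[t,t^{-1}]}_{k}G)\cap\bP^{+}_{\infty}$, an affine algebraic group. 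Artin vanishing on this affine scheme then gives both bounds at once ($\cohoc{i}{-}=0$ for $i<0$, $\cohog{i}{-}=0$ for $i>0$), and since the two cohomologies agree, $e(C)$ sits in degree $0$. In particular, no fibration structure of $\ev_{0}$, no dimension bookkeeping over the pro-unipotent radicals, and no reduction to IC sheaves via monoidality are needed: the "main obstacle" you anticipate dissolves once $\omega^{-1}(U)$ is identified with $\Gamma^{+}_{\infty}$, while the genuinely essential step --- converting the $!$-pushforward into a $*$-pushforward via cleanness and duality --- is absent from your outline.
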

\begin{proof} Let $\omega: \Fl_{\bP_{0}}=L_{0}G/\bP_{0}\to \Bun_{G}(\bP_{0}, \bP^{++}_{\infty})$ be the uniformization map. Recall from Lemma \ref{l:clean} that any object in $\calD(\phi)$ is supported on the open subset $U\subset\Bun$. We think of $\Fl_{\bP_{0}}$ as the fiber preimage of $\star\in\Bun$ under $\orr{h}_{0}:\Hk_{0}\to \Bun$, and think of $\omega$ as the restriction of $\oll{h}_{0}:\Hk_{0}\to\Bun$, where $\Hk_{0}$ is the analog of $\Hk$ that only modifies the $G$-bundles at $0$. By construction, $e(C)$ is the stalk at $\star$ of $T(C,A(\phi))=\orr{h}_{0,!}(\oll{h}^{*}_{0}A(\phi)\otimes C^{\Hk})$. By proper base change we have
\begin{equation}\label{ee}
e(C)\cong \cohoc{*}{\omega^{-1}(U), \omega^{*}A(\phi)\otimes C}\cong \cohog{*}{\omega^{-1}(U), \omega^{*}A(\phi)\otimes C}
\end{equation}
where the second isomorphism is proved by replacing $C$ by its Verdier dual $\DD(C)$ in the first isomorphism and using Lemma \ref{l:clean}(1). 

If $C\in D^{b}_{c}(\bP_{0}\backslash L_{0}G/\bP_{0})$ is perverse, then $\omega^{*}A(\phi)\otimes C$ is a perverse sheaf on $\omega^{-1}(U)$. By \eqref{ee} and the t-exactness properties of direct image functors under an affine morphism,  it suffices to show that $\omega^{-1}(U)\subset\Fl_{\bP_{0}}$ is affine. However, using the Birkhoff decomposition \eqref{Birk} (with the role of $0$ and $\infty$ swapped), we have $\omega^{-1}(U)\cong \Gamma_{\infty}^{+}=(\Res^{k[t,t^{-1}]}_{k}G)\cap \bP^{+}_{\infty}$, which is an affine algebraic group.
\end{proof}

\subsection{Gaitsgory's nearby cycles functor} Let $\calP:=\Perv(\bP_{0}\backslash L_{0}G/\bP_{0})\subset D^{b}_{c}(\bP_{0}\backslash L_{0}G/\bP_{0})$ be the category of perverse sheaves (the perverse t-structures is inherited from the category of $\Ql$-complexes on the affine partial flag variety $L_{0}G/\bP_{0}$).

There is a parahoric analogue Gaitsgory's nearby cycles functor (see \cite[\S5.1.2]{Ymotive})
\begin{equation*}
Z_{\bP_{0}}: \Rep(\dualG)\to \calP.
\end{equation*}
This is an exact central functor (see \cite[\S2, Definition 1]{Be}) that admits an action of the inertia group $\calI_{0}$ because it comes from nearby cycles. This action factors through the tame quotient $\calI^{t}_{0}$ and is unipotent. By \cite[Lemma 5.1]{Ymotive}, there is a canonical $\calI_{0}$-equivariant isomorphism
\begin{equation*}
T_{0}(Z_{\bP_{0}}(V),A(\phi))\cong (\Kl^{V}_{\dualG,\bP}(\phi))|_{\Spec F^{s}_{0}}\otimes A(\phi)
\end{equation*}
that is compatible with the monoidal structures. Here $\calI_{0}$ acts on $Z_{\bP_{0}}(V)$ by the above remark and on the geometric stalk $(\Kl^{V}_{\dualG,\bP}(\phi))|_{\Spec F^{s}_{0}}$ ($F^{s}_{0}$ is a separable closure of the local field $F_{0}$). In other words, we have an isomorphism of tensor functors
\begin{equation}\label{Zmono}
e\circ Z_{\bP_{0}}(-)\cong \Kl_{\dualG, \bP}(\phi)|_{\Spec F^{s}_{0}}: \Rep(\dualG)\to \Vect
\end{equation}
that is equivariant under the actions of $\calI_{0}$.

\subsection{Serre quotients of $\calP$}
The category $\calP=\Perv(\bP_{0}\backslash L_{0}G/\bP_{0})$ is not closed under the convolution $\conv{\bP_{0}}$ in general. There is a universal way to fix this problem. Let $\calN\subset\calP$ be the Serre subcategory of $\calP$ generated (under extensions) by irreducible objects that appear as simple constituents of $\pH^{i}(C_{1}\conv{\bP_{0}}C_{2})$ for some $C_{1}, C_{2}\in\calP$ and $i\neq0$. Consider the Serre quotient $\bcP:=\calP/\calN$.
 
\begin{lemma}
The functor $\pH^{0}(-\conv{\bP_{0}}-):\calP\times\calP\to\calP\surj\bcP$ factors through $\bcP\times\bcP$ and and it defines a monoidal structure $\circledast: \bcP\times\bcP\to \bcP$ on $\bcP$. \end{lemma}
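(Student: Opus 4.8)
The plan is to verify directly that the bifunctor $\pH^{0}(-\conv{\bP_{0}}-)$ descends to the Serre quotient $\bcP$ and is associative and unital up to coherent isomorphism. First I would recall the definition of $\calN$: it is the Serre subcategory of $\calP$ generated by simple constituents of $\pH^{i}(C_{1}\conv{\bP_{0}}C_{2})$ with $i\neq 0$. The first step is to show that for $N\in\calN$ and any $C\in\calP$ one has $\pH^{0}(N\conv{\bP_{0}}C)\in\calN$ (and symmetrically on the other side). Since $\calN$ is closed under extensions and subquotients, and $\pH^{0}(-\conv{\bP_{0}}C)$ is right exact (being $\pH^{0}$ of a bifunctor that is exact after composing with the forgetful functor to $D^{b}_{c}$ of the flag variety, which sends perverse sheaves to complexes in a bounded range), it suffices to treat the case $N$ simple, i.e. $N$ a simple constituent of some $\pH^{j}(D_{1}\conv{\bP_{0}}D_{2})$ with $j\neq 0$. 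Here I would use associativity of $\conv{\bP_{0}}$ at the level of $D^{b}_{c}$ together with a spectral sequence (the one computing $\pH^{*}$ of a triple convolution by first convolving $D_{1}\conv{\bP_{0}}D_{2}$, then with $C$) to express the simple constituents of $\pH^{0}(N\conv{\bP_{0}}C)$ in terms of constituents of $\pH^{i}((D_{1}\conv{\bP_{0}}D_{2})\conv{\bP_{0}}C)\cong\pH^{i}(D_{1}\conv{\bP_{0}}(D_{2}\conv{\bP_{0}}C))$ for various $i$; the point is that because $N$ sits in cohomological degree $j\neq 0$, any constituent of $\pH^{0}(N\conv{\bP_{0}}C)$ contributes to $\pH^{i}$ of the triple convolution for some $i\neq 0$, hence lies in $\calN$ by definition.

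Granting that, the bifunctor $\pH^{0}(-\conv{\bP_{0}}-):\calP\times\calP\to\bcP$ kills $\calN$ in each variable, so by the universal property of the Serre quotient (applied one variable at a time, using that $\bcP$ is abelian and the functor is additive and right exact) it factors uniquely through $\bcP\times\bcP\to\bcP$; call the result $\circledast$. The next step is associativity: I would produce the coherence isomorphism $(\bar C_{1}\circledast\bar C_{2})\circledast\bar C_{3}\cong\bar C_{1}\circledast(\bar C_{2}\circledast\bar C_{3})$ by comparing both sides with $\pH^{0}$ of the triple convolution $C_{1}\conv{\bP_{0}}C_{2}\conv{\bP_{0}}C_{3}$ in $D^{b}_{c}$. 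Concretely, the natural map $\pH^{0}(\pH^{0}(C_{1}\conv{\bP_{0}}C_{2})\conv{\bP_{0}}C_{3})\to\pH^{0}((C_{1}\conv{\bP_{0}}C_{2})\conv{\bP_{0}}C_{3})$ has kernel and cokernel built from $\pH^{i}(-\conv{\bP_{0}}C_{3})$ of the truncations $\pH^{\neq 0}(C_{1}\conv{\bP_{0}}C_{2})$, all of whose constituents lie in $\calN$ by the definition of $\calN$ and the first step; hence this map becomes an isomorphism in $\bcP$. Doing the same on the right and using associativity of $\conv{\bP_{0}}$ in $D^{b}_{c}$ gives the associativity constraint, and the pentagon axiom follows from that for $\conv{\bP_{0}}$ since all the comparison maps are canonical. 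The unit is $\delta_{\bP_{0}}$, which is already perverse and satisfies $\delta_{\bP_{0}}\conv{\bP_{0}}C\cong C$ on the nose, so $\bar\delta_{\bP_{0}}$ is a strict unit in $\bcP$; one checks the triangle axiom the same way.

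The main obstacle is the first step — showing $\calN$ is a two-sided $\conv{\bP_{0}}$-ideal for the truncated convolution — because it requires controlling the cohomological amplitude of iterated convolutions and tracking which simple constituents appear in which perverse degree. The key inputs are that $\conv{\bP_{0}}$ on $D^{b}_{c}(\bP_{0}\backslash L_{0}G/\bP_{0})$ is genuinely associative (not just up to higher homotopy, which here one gets from the standard convolution diagram and proper base change) and left/right exactness estimates bounding $\conv{\bP_{0}}$ of two perverse sheaves to a fixed range of perverse degrees; these let the spectral sequence argument close. Everything else is formal manipulation with Serre quotients and is routine once the ideal property is in hand, so I would present the ideal property as the substantive lemma and the descent of associativity and unitality as immediate consequences.
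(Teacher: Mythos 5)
Your proposal follows the same route as the paper: prove that $\calN$ is an ideal for $\pH^{0}(-\conv{\bP_{0}}-)$ by comparing with the triple convolution, then get the descent, associativity and unit formally. However, there is a genuine gap at the central step. Your claim that ``because $N$ sits in cohomological degree $j\neq0$, any constituent of $\pH^{0}(N\conv{\bP_{0}}C)$ contributes to $\pH^{i}$ of the triple convolution for some $i\neq0$'' runs the spectral sequence in the wrong direction: in $\pH^{b}(\pH^{a}(D_{1}\conv{\bP_{0}}D_{2})\conv{\bP_{0}}C)\Rightarrow\pH^{a+b}(D_{1}\conv{\bP_{0}}D_{2}\conv{\bP_{0}}C)$ the abutment is built from subquotients of the $E_{2}$-terms, not conversely, so a constituent of $\pH^{0}(\pH^{j}(D_{1}\conv{\bP_{0}}D_{2})\conv{\bP_{0}}C)$ may be killed by a differential and then appears in no $\pH^{i}$ of the triple convolution at all. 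What makes this step work in the paper is the decomposition theorem: one first reduces to $D_{1},D_{2}$ simple (hence pure), and then, working over $\bark$, $D_{1}\conv{\bP_{0}}D_{2}\cong\bigoplus_{a}\pH^{a}(D_{1}\conv{\bP_{0}}D_{2})[-a]$. This purity input yields exactly the two facts your argument is missing: the simple object $N$ is a direct summand (not merely a subquotient) of $\pH^{j}(D_{1}\conv{\bP_{0}}D_{2})$, and the spectral sequence degenerates at $E_{2}$, so $\pH^{0}(\pH^{j}(D_{1}\conv{\bP_{0}}D_{2})\conv{\bP_{0}}C)$ really is a subquotient of $\pH^{j}$ of the triple convolution. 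Your proposed key inputs (associativity of $\conv{\bP_{0}}$ and amplitude bounds) are true but are not what closes the argument; purity of simple perverse sheaves is.

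Two smaller points. First, $\pH^{0}(-\conv{\bP_{0}}C)$ is not right exact: a short exact sequence of perverse sheaves only gives a long exact sequence of perverse cohomologies of the convolutions. Your reduction to simple $N$ (and likewise the reduction to simple $D_{1},D_{2}$, which you need in order to invoke purity) can be repaired using that long exact sequence together with the observation that the correction terms are nonzero perverse degrees of convolutions of objects of $\calP$, hence lie in $\calN$ by definition; but as written the justification is incorrect. Second, the same decomposition-theorem input is needed in your associativity step: to see that the kernel and cokernel of $\pH^{0}(\pH^{0}(C_{1}\conv{\bP_{0}}C_{2})\conv{\bP_{0}}C_{3})\to\pH^{0}((C_{1}\conv{\bP_{0}}C_{2})\conv{\bP_{0}}C_{3})$ lie in $\calN$ you again want the splitting of $C_{1}\conv{\bP_{0}}C_{2}$ into shifted perverse summands, not just the existence of the spectral sequence. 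With the decomposition theorem inserted at these points your argument becomes the paper's proof.
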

\begin{proof}
We show that if an irreducible object $C\in\calN$, then $\pH^{0}(C\conv{\bP_{0}}C')\in\calN$ for any $C'\in\calP$. By definition $C$ is a subquotient of $\pH^{i}(C_{1}\conv{\bP_{0}}C_{2})$ for some $i\neq0$ and $C_{1}, C_{2}$ simple objects in $\calP$. By the decomposition theorem, $C$ is in fact a direct summand of $\pH^{i}(C_{1}\conv{\bP_{0}}C_{2})$. Therefore it suffices to show that $\pH^{0}(\pH^{i}(C_{1}\conv{\bP_{0}}C_{2})\conv{\bP_{0}}C')\in\calN$. Again by the decomposition theorem, the spectral sequence $\pH^{j}(\pH^{i}(C_{1}\conv{\bP_{0}}C_{2})\conv{\bP_{0}}C')\Rightarrow \pH^{i+j}(C_{1}\conv{\bP_{0}}C_{2}\conv{\bP_{0}}C')$ degenerates at $E_{2}$, therefore $\pH^{0}(\pH^{i}(C_{1}\conv{\bP_{0}}C_{2})\conv{\bP_{0}}C')$ is a subquotient of $\pH^{i}(C_{1}\conv{\bP_{0}}C_{2}\conv{\bP_{0}}C')$, hence in $\calN$ since $i\neq0$. This shows that the functor $\pH^{0}(-\conv{\bP_{0}}-)$ factors through $\bcP\times\bcP$. The associativity of $\circledast$ can be proved in the same way. 
\end{proof}

Let $a$ be the length of the longest element of $W_{\bP}$, so we have $a=a(\unc)$ by Lemma \ref{l:aw}. Let $\pi: \Fl=L_{0}G/\bI_{0}\to \Fl_{\bP_{0}}=L_{0}G/\bP_{0}$ be the projection. Let $\calQ=\Perv(\bI_{0}\backslash L_{0}G/\bI_{0})$. Since $\pi$ is smooth of relative dimension $a$, the functor $\pi^{\natural}:=\pi^{*}[a](a/2)$ gives an exact functor $\calP\to\calQ$ that is also fully faithful (we fix a square root of $q=\#k$, hence the half Tate twist makes sense). We may therefore identify $\calP$ with a full subcategory of $\calQ$ via the functor $\pi^{\natural}$.

The category $\calQ$ carries a filtration by full subcategories indexed by two-sided cells in $\Wa$. Let $\unc=\unc_{\bP}$ be the cell containing the longest element in $W_{\bP}$. Then $\calP\subset \calQ_{\leq \unc}$. The category $\calP_{<\unc}:=\calP\cap\calQ_{<\unc}$ is a Serre subcategory (generated by those irreducible objects in $\calP$ indexed by longest representatives of $(W_{\bP},W_{\bP})$-double cosets in $\Wa$ that belong to a cell smaller than $\unc$). Let $\calP_{\unc}:=\calP/\calP_{<\unc}$ be the Serre quotient. This is a full subcategory of a similar Serre quotient $\calQ_{\unc}:=\calQ_{\leq\unc}/\calQ_{<\unc}$. It is known that $\calQ_{\unc}$ carries a monoidal structure $\odot$ given by truncated convolution: $\tilC_{1}\odot \tilC_{2}$ is the image of $\pH^{a}(\tilC_{1}\conv{\bI_{0}}\tilC_{2})(a/2)$ in $\calQ_{\unc}$. 

\begin{lemma} The category $\calP_{\unc}\subset\calQ_{\unc}$ is closed under the truncated convolution $\odot$. The quotient functor $\calP\surj\calP_{\unc}$ factors through $\bcP$ and induces a monoidal Serre quotient functor
\begin{equation*}
\eta: (\bcP, \circledast)\surj(\calP_{\unc}, \odot)
\end{equation*}
\end{lemma}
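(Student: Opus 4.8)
The plan is to show two things in sequence: first, that $\mathcal{P}_{\unc}\subset\mathcal{Q}_{\unc}$ is closed under the truncated convolution $\odot$; second, that the quotient functor $\mathcal{P}\surj\mathcal{P}_{\unc}$ kills $\calN$ (so factors through $\bcP$) and that the induced functor $\eta$ intertwines $\circledast$ with $\odot$, is monoidal, and is a Serre quotient. The key input is Lemma \ref{l:aw}, which gives $a=a(\unc_{\bP})=\ell(w_{\bP})$; this is what makes the ``correct'' cohomological degree for truncated convolution in $\mathcal{Q}_{\unc}$ match the shift $[a]$ used to define $\pi^{\natural}:\mathcal{P}\incl\mathcal{Q}$.

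First I would analyze $\pi^{\natural}(C_1)\conv{\bI_0}\pi^{\natural}(C_2)$ for $C_1,C_2\in\mathcal{P}$. Since $\pi$ is smooth proper of relative dimension $a$, one has a projection-formula/base-change identity relating the $\bI_0$-convolution of the pullbacks to $\pi^{\natural}$ applied to the $\bP_0$-convolution $C_1\conv{\bP_0}C_2$, up to a shift: concretely $\pi^{\natural}(C_1)\conv{\bI_0}\pi^{\natural}(C_2)\cong \pi^{\natural}(C_1\conv{\bP_0}C_2)[a](a/2)$ (using that $\conv{\bI_0}$ over $\bP_0$-monodromic objects factors through $\conv{\bP_0}$ after a fiberwise cohomology along $\pi$, which for $\bP_0$-equivariant sheaves is just the shift). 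Taking $\pH^{a}$ and twisting, the truncated convolution $\pi^{\natural}(C_1)\odot\pi^{\natural}(C_2)$ is the image in $\mathcal{Q}_{\unc}$ of $\pi^{\natural}\pH^{0}(C_1\conv{\bP_0}C_2)(a/2)\cdot(\text{twist})$ — that is, of $\pi^{\natural}$ applied to $\pH^0(C_1\conv{\bP_0}C_2)$, which lands in $\pi^{\natural}(\mathcal{P})$. This simultaneously shows closure of $\mathcal{P}_{\unc}$ under $\odot$ and identifies the operation on $\mathcal{P}_{\unc}$ with the image of $\pH^0(-\conv{\bP_0}-)$, i.e.\ with $\circledast$ transported through $\pi^{\natural}$. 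One must be a little careful that passing from $\pH^a$ of the $\bI_0$-convolution to $\pH^0$ of the $\bP_0$-convolution is exactly a shift by $a$ with no contribution from other $\pH^i$ surviving into $\mathcal{Q}_{\unc}$; this is where the cell-theoretic vanishing (objects in $\mathcal{Q}_{\leq\unc}$ convolve to give $\pH^i$ for $i>a$ landing in $\mathcal{Q}_{<\unc}$, by \cite[IV, Theorem 4.8]{cells} and Lusztig's asymptotic Hecke algebra) is needed, together with $a=\ell(w_{\bP})$.

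Next I would check that $\calN\subset\ker(\mathcal{P}\surj\mathcal{P}_{\unc})$. By definition $\calN$ is generated by simple constituents of $\pH^{i}(C_1\conv{\bP_0}C_2)$ for $i\neq 0$; applying $\pi^{\natural}$ and comparing with the $\bI_0$-picture as above, such a constituent corresponds to a simple constituent of $\pH^{a+i}(\pi^{\natural}C_1\conv{\bI_0}\pi^{\natural}C_2)$ with $i\neq 0$, hence with cohomological degree $\neq a$; by the definition of $\mathcal{Q}_{\unc}$ (degrees $>a$ are lower in the cell order, degrees $<a$ do not contribute to the top cell) such constituents map to $0$ in $\mathcal{Q}_{\unc}$, hence in $\mathcal{P}_{\unc}$. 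Therefore $\mathcal{P}\surj\mathcal{P}_{\unc}$ factors through $\bcP$, giving $\eta$. The monoidality of $\eta$ is then exactly the identification of the previous paragraph: $\eta(\bar C_1\circledast\bar C_2)$ is the image of $\pH^0(C_1\conv{\bP_0}C_2)$, which equals $\eta(\bar C_1)\odot\eta(\bar C_2)$; the associativity and unit constraints are inherited because both $\circledast$ and $\odot$ are defined by the same top-cohomology-truncation recipe and $\pi^{\natural}$ is monoidal up to the bookkeeping shift. That $\eta$ is a Serre quotient functor follows since it is the composite of Serre quotients $\bcP=\mathcal{P}/\calN\surj \mathcal{P}/(\text{larger Serre subcategory})=\mathcal{P}_{\unc}$ once one observes $\mathcal{P}_{<\unc}\supset$ image of $\calN$, so $\mathcal{P}_{\unc}$ is a further Serre quotient of $\bcP$.

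The main obstacle I expect is the careful bookkeeping in the first step: proving the clean identity $\pi^{\natural}(C_1)\conv{\bI_0}\pi^{\natural}(C_2)\cong\pi^{\natural}(C_1\conv{\bP_0}C_2)[a](a/2)$ and, more delicately, that after taking perverse cohomology the only degree contributing to the top cell $\mathcal{Q}_{\unc}$ is $\pH^0(C_1\conv{\bP_0}C_2)$ — equivalently that the off-diagonal perverse cohomologies $\pH^{i}$, $i\neq 0$, of the $\bP_0$-convolution are supported on cells strictly below $\unc$. This uses the decomposition theorem, semisimplicity of convolution of pure complexes, and Lusztig's structure theory of two-sided cells (the $a$-function bounds and the fact that $w_{\bP}$ is of maximal length $a=a(\unc)$ in its coset); the rest of the argument is then essentially formal once this input is in place.
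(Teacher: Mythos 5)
Your overall strategy is the same as the paper's (compare the $\bI_{0}$-convolution of pullbacks with the $\bP_{0}$-convolution, kill the $i\neq 0$ perverse cohomologies using the $a$-function, and identify $\odot$ with the image of $\pH^{0}(-\conv{\bP_{0}}-)$), but the key identity you rely on is wrong as stated, and the error propagates. The correct comparison is
\begin{equation*}
\pi^{\natural}C_{1}\conv{\bI_{0}}\pi^{\natural}C_{2}\;\cong\;\pi^{\natural}\bigl(C_{1}\conv{\bP_{0}}C_{2}\bigr)\otimes\cohog{*}{\bP_{0}/\bI_{0}}[a](a/2),
\end{equation*}
i.e.\ there is an extra factor of the \emph{full} cohomology of the partial flag variety $\bP_{0}/\bI_{0}$ (a graded vector space of total dimension $\#W_{\bP}$, spread over degrees $0,\dots,2a$), not just a shift. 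Your justification --- that the fiberwise cohomology along $\pi$ for $\bP_{0}$-equivariant sheaves ``is just the shift'' --- is false, and with your identity the degree bookkeeping breaks: $\pH^{a}$ of your right-hand side is $\pi^{\natural}\pH^{2a}(C_{1}\conv{\bP_{0}}C_{2})$, not $\pi^{\natural}\pH^{0}(C_{1}\conv{\bP_{0}}C_{2})$, so the identification of $\odot$ with $\circledast$ would fail (indeed it would usually give zero). Likewise your claim that a constituent of $\pH^{i}(C_{1}\conv{\bP_{0}}C_{2})$ corresponds to a constituent of $\pH^{a+i}$ of the Iwahori convolution does not follow from your identity; it follows from the correct one by extracting the top piece $\cohog{2a}{\bP_{0}/\bI_{0}}$, which is one-dimensional --- this is exactly how the paper gets both the inclusion $\calN\subset\calP_{<\unc}$ and, after taking $\pH^{a}$ of the correct identity, the formula $\pH^{a}(\pi^{\natural}C_{1}\conv{\bI_{0}}\pi^{\natural}C_{2})\cong\bigoplus_{i\geq0}\pi^{\natural}\pH^{i}(C_{1}\conv{\bP_{0}}C_{2})\otimes\cohog{2a-i}{\bP_{0}/\bI_{0}}(a/2)$, in which only the $i=0$ term survives in $\calP_{\unc}$.

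A second, smaller gap: in showing $\calN\subset\calP_{<\unc}$ you dispose of the case $i<0$ with the assertion that ``degrees $<a$ do not contribute to the top cell,'' which is not a valid argument --- the $a$-function only bounds the degrees from above, so appearing in a low degree forces nothing. The paper handles this by observing that $C_{1}\conv{\bP_{0}}C_{2}$ is Verdier self-dual (decomposition theorem for pure complexes), so one may assume $i>0$, and only then does the bound $a(\tilw)\geq a+i>a$ force $\tilw$ into a strictly smaller cell. With these two repairs --- the correct convolution identity with the $\cohog{*}{\bP_{0}/\bI_{0}}$ factor, and the duality reduction to $i>0$ --- your argument becomes the paper's proof.
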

\begin{proof}
For $C_{1},C_{2}\in\calP$ with image $\tilC_{1}$ and $\tilC_{2}$ in $\calQ$, we have
\begin{equation}\label{two conv}
\pi^{\natural}C_{1}\conv{\bI_{0}}\pi^{\natural}C_{2}\cong\pi^{\natural}(C_{1}\conv{\bP_{0}}C_{2})\otimes\cohog{*}{\bP_{0}/\bI_{0}}[a](a/2).
\end{equation}

We first show that $\calN\subset \calP_{<\unc}$ so the quotient functor factors as $\bcP\surj\calP_{\unc}$. For a simple perverse sheaf $C_{w}\in\calN$ indexed by $w\in W_{\bP}\backslash \Wa/W_{\bP}$, it appears in $\pH^{i}(C_{1}\conv{\bP_{0}}C_{2})$ for some simple perverse sheaves $C_{1}, C_{2}\in\calP$ and some $i\neq0$. Since $C_{1}\conv{\bP_{0}}C_{2}$ is isomorphic to its own Verdier dual, we may assume $i>0$. Using \eqref{two conv} and taking the top cohomology of $\cohog{*}{\bP_{0}/\bI_{0}}$, we conclude that $\pi^{\natural}\pH^{i}(C_{1}\conv{\bP_{0}}C_{2})$ is a direct summand of $\pH^{a+i}(\pi^{\natural}C_{1}\conv{\bI_{0}}\pi^{\natural}C_{2})(a/2)$. Therefore $\pi^{\natural}C_{w}$ also appears in $\pH^{a+i}(\pi^{\natural}C_{1}\conv{\bI_{0}}\pi^{\natural}C_{2})(a/2)$. Now $\pi^{\natural}C_{w}$ is $\IC_{\tilw}$ (up to a Tate twist) for $\tilw$ the longest representative of the coset $w$. By the definition of the $a$-function, this means $a(\tilw)\geq a+i>a$, hence $\tilw$ belongs to a smaller cell than $\unc$ (we have a priori $\tilw\leq\unc$). This shows that $\pi^{\natural}C_{w}\in\calQ_{<\unc}$, hence $C_{w}\in\calP_{<\unc}$. The conclusion is that $\calN\subset\calP_{<\unc}$ and we have a Serre quotient functor $\bcP\surj \calP_{\unc}$.

Taking $a$-th perverse cohomologies of both sides of \eqref{two conv}, we get
\begin{equation*}
\pH^{a}(\pi^{\natural}C_{1}\conv{\bI_{0}}\pi^{\natural}C_{2})\cong\bigoplus_{i\geq0}\pi^{\natural}\pH^{i}(C_{1}\conv{\bP_{0}}C_{2})\otimes\cohog{2a-i}{\bP_{0}/\bI_{0}}(a/2).
\end{equation*}
For $i>0$ the corresponding term on the right side lies in $\calN$, hence have zero image in $\calP_{\unc}$ by the previous discussion. Therefore $\pi^{\natural}C_{1}\odot\pi^{\natural}C_{2}$, defined as the image of $\pH^{a}(\pi^{\natural}C_{1}\conv{\bI_{0}}\pi^{\natural}C_{2})(a/2)$ in $\calQ_{\unc}$, is equal to the image of $\pi^{\natural}\pH^{0}(C_{1}\conv{\bP_{0}}C_{2})\otimes\cohog{2a}{\bP_{0}/\bI_{0}}(a)\cong\pi^{\natural}\pH^{0}(C_{1}\conv{\bP_{0}}C_{2})$ in $\calP_{\unc}$, which is the same as the image of $C_{1}\circledast C_{2}\in\bcP$ in $\calP_{\unc}$. This shows that $\calP_{\unc}$ is closed under $\odot$, and that the quotient functor $\bcP\to\calP_{\unc}$ is monoidal.
\end{proof}

\subsection{Finish of the proof of Theorem \ref{th:unip}}\label{ss:pf unip} By Lemma \ref{l:exact}, $e$ is a t-exact functor, and therefore $e|_{\calP}$ factors through the Serre quotient $\bcP$ by the universal property of $\bcP$. We get a monoidal functor $\overline{e}:(\bcP,\circledast)\to(\Vect,\otimes)$. 

Denote by $\barZ$ the composition $\Rep(\dualG)\xrightarrow{Z_{\bP_{0}}}\calP\surj\bcP$, which is an exact monoidal functor. Let $\bcP'\subset\bcP$ be the full subcategory consisting of subquotients of the images of $\barZ$. Similarly, let $\calP'_{\unc}\subset \calP_{\unc}$ be the full subcategory consisting of subquotient of images of $\eta\circ\barZ$. 

Summarizing, we have the following commutative diagram
\begin{equation*}
\xymatrix{\Rep(\dualG)\ar[r]^{\barZ_{\bP_{0}}} & \bcP'\ar@{^{(}->}[r]\ar@{->>}[d]^{\eta'} & \bcP\ar@{->>}[d]^{\eta}\ar[r]^{\overline{e}} & \Vect\\
 & \calP'_{\unc}\ar@{^{(}->}[r] & \calP_{\unc}}
\end{equation*}
Both functors $\barZ$ and $\eta'\circ\barZ$ are exact central functors. Applying a criterion of Bezrukavnikov in \cite[Proposition 1]{Be} to both functors, we see that both $\bcP'$ and $\calP'_{\unc}$ are neutral Tannakian categories over $\Ql$ and hence the tensor functor  $\eta'$ between them  must be faithful (\cite[Proposition 1.4]{DM}). However $\eta'$ is also a Serre quotient functor. This forces $\eta'$ to be an equivalence of tensor categories.  Write $\bcP'\cong\Rep(\dualH)$ for some algebraic subgroup $\dualH\subset\dualG$ so that $\barZ$ becomes the restriction functor from $\dualG$ to $\dualH$. Therefore we may identify the two rows in the above diagram and obtain a factorization of the fiber functor $e\circ Z_{\bP_{0}}$ as
\begin{equation*}
e\circ Z_{\bP_{0}}:\Rep(\dualG)\xrightarrow{\Res^{\dualG}_{\dualH}} \Rep(\dualH)\cong \calP'_{\unc}\cong\bcP'\xrightarrow{\overline{e}}\Vect.
\end{equation*}
Note that the category $\calP'_{\unc}$ (viewed as a subcategory of $\calQ_{\unc}$) is the category $\calA_{w_{\bP}}$ introduced by Bezrukavnikov in \cite[\S4.3]{Be}, where $w_{\bP}\in W_{\bP}$ is the longest element, and is a Duflo (or distinguished) involution (\cite[II, Remark following Proposition 1.4]{cells}) by Lemma \ref{l:aw}. Fix a topological generator $\xi\in \calI^{t}_{0}$, then $\xi$ acts on the restriction functor $\Res^{\dualG}_{\dualH}$. By Tannakian formalism, the action of $\xi$ on the fiber functor $\overline{e}\circ\Res^{\dualG}_{\dualH}=e\circ Z_{\bP_{0}}$ gives an element $u\in \dualG$ that commutes with $\dualH$ (see \cite[Theorem 1]{Be}). By \eqref{Zmono}, $u$ is conjugate to the image of $\xi$ under the monodromy representation of the local system $\Kl_{\dualG,\bP}(\phi)$. Therefore we need to prove that $u\in\unu_{\bP}$, i.e., $u$ corresponds to $\unc_{\bP}$ under Lusztig's bijection in Theorem \ref{th:Lu}. But this is exactly what was shown by Bezrukavnikov in \cite[Theorem 2]{Be}. This completes the proof of Theorem \ref{th:unip}.

\section{Rigidity}\label{s:rigidity}
In this section we shall deduce that $\Kl_{\dualG,\bP}(1,\phi)$ is cohomologically rigid under some additional assumptions. We work over the base field $\bark$ without changing notation. 

\subsection{Cohomological rigidity} Recall that a $\dualG$-local system $E_{\dualG}$ over an open subset $\pline$ of a complete smooth connected algebraic curve $X$ is called {\em cohomologically rigid} if
\begin{equation*}
\cohog{*}{X,j_{!*}E^{\Ad}_{\dualG}}=0
\end{equation*}
where $j:\pline\incl X$ is the inclusion, $E^{\Ad}_{\dualG}$ is the local system on $\pline$ associated with the adjoint representation of $\dualG$ on its Lie algebra $\dualg$, and $j_{!*}$ simply means the sheaf-theoretic direct image $\upH^{0}j_{*}$.

Back to the situation of \S\ref{ss:auto} with $\chi=1$. We write $\Kl_{\dualG,\bP}(1, \phi)$ as $\Kl_{\dualG,\bP}(\phi)$. Let $j:\pline\incl X$ be the open inclusion, and $i_{0}, i_{\infty}:\Spec\ \bark\incl X$ be the closed embedding of the two points $0$ and $\infty$.

\begin{prop} Assume $\chi=1$ and $G$ is split. Assume $\chk$ is sufficiently large so that $m=m(\bP)$ is a regular elliptic number for $\WW$ (see \S\ref{ss:bij}). Assuming the properties (1) and (2) in \S\ref{ss:wildL} about the monodromy of $\Kl_{\dualG,\bP}(\phi)$ at $\infty$ hold. Then we have
\begin{equation*}
\cohog{*}{X,j_{!*}\Kl^{\Ad}_{\dualG,\bP}(\phi)}=0.
\end{equation*}
I.e., $\Kl_{\dualG, \bP}(\phi)$ is cohomologically rigid.
\end{prop}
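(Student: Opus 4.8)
The plan is to compute the cohomology of $\calF:=\Kl^{\Ad}_{\dualG,\bP}(\phi)$, the local system on $\pline$ attached to the adjoint representation of $\dualG$ on $\dualg$; it has rank $\dim\dualg$, and since $G$ is split the standing hypothesis $\xch(Z\GG)^{\sigma}=0$ forces $\GG$, hence $\dualG$, to be semisimple, so the Killing form gives an isomorphism $\calF\cong\calF^{\vee}$. I would first dispose of $\cohog{0}{X,j_{!*}\calF}$ and $\cohog{2}{X,j_{!*}\calF}$. Since $j_{!*}\calF=\upH^{0}j_{*}\calF$ and $\calF$ is a local system on the affine curve $\pline$, one has $\cohog{0}{X,j_{!*}\calF}=\cohog{0}{\pline,\calF}=\calF^{\pi_{1}(\pline)}$, which is contained in $\calF^{\calI_{\infty}}=\dualg^{\calI_{\infty}}$, and the latter vanishes by property (1) of \S\ref{ss:wildL}. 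From the short exact sequence $0\to j_{!}\calF\to j_{!*}\calF\to i_{0*}\calF^{\calI_{0}}\oplus i_{\infty*}\calF^{\calI_{\infty}}\to 0$ one identifies $\cohog{2}{X,j_{!*}\calF}\cong\cohog{2}{X,j_{!}\calF}=\cohoc{2}{\pline,\calF}$, which by Poincar\'e duality equals $\cohog{0}{\pline,\calF^{\vee}}^{\vee}(-1)=0$ because $\calF\cong\calF^{\vee}$.

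With $\cohog{0}{X,j_{!*}\calF}=\cohog{2}{X,j_{!*}\calF}=0$ in hand, it suffices to prove $\chi(X,j_{!*}\calF)=0$. Combining the same short exact sequence with Grothendieck--Ogg--Shafarevich on $\pline\cong\Gm$ (where $\chi_{c}(\Gm)=0$, hence $\chi_{c}(\pline,\calF)=-\Swan_{0}(\calF)-\Swan_{\infty}(\calF)$) gives
\begin{equation*}
\chi(X,j_{!*}\calF)=\dim\calF^{\calI_{0}}+\dim\calF^{\calI_{\infty}}-\Swan_{0}(\calF)-\Swan_{\infty}(\calF).
\end{equation*}
By Theorem \ref{th:unip} the monodromy of $\calF$ at $0$ is tame and lies in the unipotent class $\unu_{\bP}$, so $\Swan_{0}(\calF)=0$ and $\dim\calF^{\calI_{0}}=\dim\dualg^{u}=\dim Z_{\dualG}(u)$ for $u\in\unu_{\bP}$; by properties (1) and (2) of \S\ref{ss:wildL}, $\dim\calF^{\calI_{\infty}}=0$ and $\Swan_{\infty}(\calF)=\#\Phi/m$. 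Hence $\chi(X,j_{!*}\calF)=\dim Z_{\dualG}(u)-\#\Phi/m$, and the rigidity assertion reduces to the numerical identity $\dim Z_{\dualG}(u)=\#\Phi/m$ for $u\in\unu_{\bP}$.

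To establish this identity I would combine $\dim Z_{\dualG}(u)=\rk\dualG+2\dim\calB_{u}$ (valid since $\chk$ is large) with $\dim\calB_{u}=a(\unc_{\bP})=\ell(w_{\bP})$ (Theorem \ref{th:Lu} and Lemma \ref{l:aw}), which give $\dim Z_{\dualG}(u)=\rk\dualG+2\ell(w_{\bP})=\dim L_{\bP}=\dim\frg_{0}$, the degree-zero piece of the Vinberg $\ZZ/m$-grading $\frg=\bigoplus_{j\in\ZZ/m}\frg_{j}$ attached to $\bP$. The identity thus becomes $\dim L_{\bP}=\#\Phi/m$, a property of the $\ZZ$-regular $\ZZ/m$-gradings of $\frg$: via the relation between admissible parahorics and regular elliptic numbers (\S\ref{ss:bij}, following \cite{GLRY} and \cite{Spr}), $m$ is the order of a $\ZZ$-regular elliptic element $w\in\WW$, which permutes $\Phi$ freely with $\#\Phi/m$ orbits, and Springer's theory of regular elements --- together with the ellipticity of $w$, which removes the contribution of the Cartan subalgebra --- pins down $\dim\frg_{0}$ as $\#\Phi/m$. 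In any event $\dim Z_{\dualG}(u)=\#\Phi/m$ can be checked directly, class by class, from the tables in \S\ref{ss:tables} together with \cite[\S7]{GLRY}.

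The only real obstacle is this last step. The formal part --- the vanishing of $H^{0}$ and $H^{2}$ and the Euler characteristic computation --- is immediate from Theorem \ref{th:unip}, the assumed local structure at $\infty$, and Grothendieck--Ogg--Shafarevich, exactly as in the Iwahori case of \cite{HNY}; what takes work is the numerical identity $\dim Z_{\dualG}(u)=\#\Phi/m$, i.e.\ extracting from Springer's and Lusztig's theory (or from the classification) that the Levi $L_{\bP}$ of an admissible parahoric has dimension $\#\Phi/m$.
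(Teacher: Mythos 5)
Your proposal is correct and follows essentially the same route as the paper: vanishing of $H^{0}$ and $H^{2}$ from $\dualg^{\calI_{\infty}}=0$ plus duality, then Grothendieck--Ogg--Shafarevich together with Theorem \ref{th:unip}, Steinberg's formula $\dim\dualg^{u}=r+2\dim\calB_{u}$, Lemma \ref{l:aw} and Theorem \ref{th:Lu} to reduce everything to the identity $\dim Z_{\dualG}(u)=\dim L_{\bP}=\#\Phi/m$, which the paper likewise obtains from the \cite{GLRY} dictionary between admissible parahorics and regular elliptic elements of order $m$. Your reformulation via $\chi(X,j_{!*})=0$ is just a repackaging of the paper's dimension count in the exact sequence \eqref{hh1}, so there is no substantive difference.
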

\begin{proof} Since $\dualg^{\calI_{\infty}}=0$ by property (2) of \S\ref{ss:wildL}, $\Kl^{\Ad}_{\dualG,\bP}(\phi)$ admits no global sections, i.e., $\cohog{0}{\pline, \Kl^{\Ad}_{\dualG,\bP}(\phi)}=\cohog{0}{X, j_{!*}\Kl^{\Ad}_{\dualG,\bP}(\phi)}=0$. Dually $\cohoc{2}{\pline, \Kl^{\Ad}_{\dualG,\bP}(\phi)}=\cohog{2}{X, j_{!*}\Kl^{\Ad}_{\dualG,\bP}(\phi)}=0$.

The distinguished triangle $j_{!}\to j_{!*}\to i_{0,*}\upH^{0}i^{*}_{0}\oplus i_{\infty,*}\upH^{0}i_{\infty}^{*}\to j_{!}[1]$ gives an exact sequence
\begin{equation}\label{hh1}
0\to \dualg^{\calI_0}\oplus\dualg^{\calI_\infty}\to \cohoc{1}{\pline,\Kl^{\Ad}_{\dualG,\bP}(\phi)}\to \cohog{1}{X,j_{!*}\Kl^{\Ad}_{\dualG,\bP}(\phi)}\to0.
\end{equation}
The Grothendieck-Ogg-Shafarevich formula implies
\begin{equation*}
\chi_c(\pline,\Kl^{\Ad}_{\dualG, \bP}(\phi))=-\Swan_\infty(\dualg).
\end{equation*}
Using the vanishing of $\upH^{2}_{c}$ and property (2) of \S\ref{ss:wildL}, we get
\begin{equation*}
\dim \cohoc{1}{\pline,\Kl^{\Ad}_{\dualG,\bP}(\phi)}=\#\Phi/m.
\end{equation*}
On the other hand, by Theorem \ref{th:unip}, $\dualg^{\calI_0}$ is the dimension of the centralizer of a unipotent element $u$ in the unipotent conjugacy class $\unu_{\bP}$ in $\dualG$. By Steinberg's theorem, $\dim\dualg^{\calI_0}=\dim\dualg^{u}=2\dim\calB_{u}+r$ (where $r$ is the rank of $G$). By Theorem \ref{th:Lu}(1), $\dim\calB_{u}=a(\unc_{\bP})=\ell(w_{\bP})$. Hence $\dim\calB_{u}=\#\Psi(L_{\bP})/2$ ($\Psi(L_{\bP})$ is the root system of $L_{\bP}$), and
\begin{equation*}
\dim\dualg^{\calI_0}=2\dim\calB_{u}+r=\#\Psi(L_{\bP})+r=\dim L_{\bP}.
\end{equation*}
By the dictionary between stable gradings on Lie algebras and regular elements in Weyl groups given in \cite[\S4]{GLRY}, $L_{\bP}$ is isomorphic to the neutral component of the fixed point subgroup of an automorphism of $\GG$ given by a lifting of a regular elliptic element $w\in \WW$ of order $m$. By analyzing the action of $w$ on the Lie algebra of $\GG$ it is easy to see that $\dim L_{\bP}=\#\Phi/m$. Therefore the first two terms in the exact sequence \eqref{hh1} both have dimension $\#\Phi/m$, hence the third term vanishes. This proves the vanishing of $\cohoc{*}{X,j_{!*}\Kl^{\Ad}_{\dualG,\bP}(\phi)}$ in all degrees.
\end{proof}

\begin{remark} When $\bP=\bI$, we computed the Euler characteristic of $\Kl^{\Ad}_{\dualG,\bI}(\phi)$ in \cite[Theorem 4]{HNY} and used this to confirm the predictions about the wild monodromy at $\infty$ in \S\ref{ss:wildL} in the Iwahori case (in this case, these predictions were made in \cite{GR}). However, the computation in \cite{HNY} is very complicated (see the argument of Proposition \ref{p:ucal} to get a sense). In principle, similar but even more complicated computation could be done for general admissible parahoric subgroups $\bP$, and the result would confirm the predictions in \S\ref{ss:wildL} (at least when $\unu_{\bP}$ is a distinguished unipotent class, in which case $\Kl^{\Ad}_{\dualG,\bP}(\phi)$ does not have global sections).
\end{remark}

\section{Examples: Quasi-split unitary groups}\label{s:u}
When $G$ is split of type $A$, the only admissible standard parahoric subgroup is the Iwahori $\bI$. In this case, $\Kl_{\dG, \bI}(\chi,\phi)$ has been calculated in \cite[\S3]{HNY}, and is identified with the Kloosterman sheaves defined by Deligne. In section, we shall compute the generalized Kloosterman sheaves for the quasi-split unitary group $G$.

\subsection{Linear algebra}\label{ulinear} Assume $\chk\neq2$. Let $(M,q)$ be a quadratic space over $k$ of dimension $n$. Let $(\cdot,\cdot):M\times M\to k$ be the associated symmetric bilinear form $(x,y)=q(x+y)-q(x)-q(y)$. Let $K'$ be the ramified quadratic extension of $K$ with uniformizer $\varpi^{1/2}$. Then there is a Hermitian form $h$ on $M\otimes_{k}K'$ defined as
\begin{equation}\label{herm}
h(x+\varpi^{1/2}y, z+\varpi^{1/2}w)=(x,y)-\varpi(y,w)+\varpi^{1/2}(y,z)-\varpi^{1/2}(x,w)
\end{equation}
for $x,y,z,w\in M\otimes_{k}K$.

Let $G=\Ug(M\otimes K',h)$ be the unitary group of the Hermitian space $(M\otimes K', h)$. This corresponds to the absolute group $\GG=\GL(M)$ and a nontrivial outer automorphism $\sigma$. The regular elliptic numbers $m$ of $(\WW,\sigma)$ are in bijection with divisors $d|n$ or $d|n-1$ (in which case we require $d<n-1$) such that the quotient $n/d$ or $(n-1)/d$ is odd. We have $m=2n/d$ or $m=2(n-1)/d$ in the two cases. Since $m/2$ is always odd, we write $m/2=2\ell+1$. For $d$ and $m$ as above, we fix a decomposition
\begin{equation}\label{ud}
M=M_{-\ell}\oplus M_{-1}\oplus M_{0}\oplus M_{1}\oplus\cdots\oplus M_{\ell}
\end{equation}
such that $d=\dim M_{i}\leq \dim M_{0}\leq d+1$ (for all $i=\pm1,\cdots, \pm\ell$), and $(M_{i}, M_{j})=0$ unless $i+j=0$. Denote the restriction of $q$ to $M_{0}$ by $q_{0}$. The pairing $(\cdot,\cdot)$ identifies $M_{-j}$ with $M_{j}^{*}$.

Let $\wt\bP_{m}\subset G(K)$ be the stabilizer of the lattice chain $\L_{\ell}\supset \L_{\ell-1}\supset\cdots\supset\L_{-\ell}$ defined as
\begin{equation*}
\L_{i}=\bigoplus_{-\ell\leq j\leq i}M_{j}\otimes \calO_{K'}\oplus\bigoplus_{i<j\leq \ell} M_{j}\otimes\varpi^{1/2}\calO_{K'}.
\end{equation*}
Its reductive quotient is $\tL_{m}=\prod_{i=1}^{\ell}\GL(M_{i})\times\Og(M_{0},q_{0})$, where $\GL(M_{i})$ acts on $M_{i}$ by the standard representation and on $M_{-i}=M_{i}^{*}$ by the dual of the standard representation. We have
\begin{equation}\label{uLab}
\tLab_{m}\cong \prod_{i=1}^{\ell}\Gm\times\{\pm1\}
\end{equation}
by taking determinants. The subgroup $\bP_{m}\subset\wt\bP_{m}$, defined as the kernel of $\wt\bP_{m}\to \tL_{m}\to\tLab_{m}\surj\{\pm1\}$, is an admissible parahoric subgroup of $G(K)$ with $m(\bP_{m})=m$.  The vector space $V_{m}:=V_{\bP_{m}}$ is
\begin{equation}\label{uV}
V_{m}=\Hom(M_{1},M_{0})\oplus\cdots\oplus\Hom(M_{\ell}, M_{\ell-1})\oplus\Sym^{2}(M_{\ell}).
\end{equation}
A better way to think of $V_{m}$ is the following. We consider the cyclic quiver
\begin{equation}\label{uquiver}
\xymatrix{ & M_{1}\ar[dl]^{\psi_{0}} & M_{2}\ar[l]^{\psi_{1}} & \cdots\ar[l] & M_{\ell}\ar[l]^{\psi_{\ell-1}} \\
M_{0}\ar[dr]^{\psi_{-1}}\\
& M_{-1}\ar[r]^{\psi_{-2}} & M_{-2}\ar[r] & \cdots\ar[r]^{\psi_{-\ell}} & M_{-\ell}\ar[uu]^{\psi_{\ell}} }
\end{equation}
There is an involution $\tau$ acting on such quivers: $\tau$ sends $\{\psi_{i}:M_{i+1}\to M_{i}\}$ to $\{\psi^{*}_{-i-1}: M_{i+1}=M_{-i-1}^{*}\to M_{-i}^{*}=M_{i}\}$ (indices understood modulo $2\ell+1$). Then $V_{m}$ is the space of $\tau$-invariant cyclic quivers.

When $m=2$, the cyclic quiver degenerates to a quiver with a single vertex $M=M_{0}$ and a self-adjoint operator $\psi$ on itself. 

\subsection{Stable locus}\label{ss:ustable}
The dual space $V^{*}_{m}$ is the space of $\tau$-invariant cyclic quivers of the shape \eqref{uquiver}, except that all arrows are reversed. Let $\phi_{i}:M_{i}\to M_{i+1}$ be the arrows. We think of $\phi_{\ell}:M_{\ell}\to M_{-\ell}$ as a quadratic form on $M_{\ell}$. An element $\phi=(\phi_{-\ell},\cdots, \phi_{\ell})\in V^{*}_{m}$ is stable under the $L_{m}$-action if and only if
\begin{itemize}
\item All maps $\phi_{i}$ have the maximal possible rank;
\item We have two quadratic forms on $M_{0}$: $q_{0}$ and the pullback of $\phi_{\ell}$ to $M_{0}$ via the map $\phi_{\ell-1}\cdots\phi_{0}:M_{0}\to M_{\ell}$. We denote the second quadratic form by $q'_{0}$. Then these quadratic forms are in general position: the pencil of quadrics spanned by $q_{0}$ and $q'_{0}$ is degenerate at $n$ distinct points on $\PP^{1}$. 
\end{itemize}
In particular, when $\dim M_{0}=d$ and $\phi$ is stable, then all $\phi_{i}$ are isomorphisms, and $\phi_{\ell}$ has at most a one-dimensional kernel. When $\dim M_{0}=d+1$ and $\phi$ is stable, then $\phi_{0}$ is surjective, $\phi_{1},\cdots, \phi_{\ell-1}$ are isomorphisms, and $\phi_{\ell}$ must be nondegenerate.

When $m=2$, $V^{*}_{2}$ is simply the space of quadratic forms on $M$. An element $\phi\in V^{*}_{2}(k)$ is stable under the $L_{2}=\SO(M,q)$-action exactly when the quadratic form $\phi$ is in general position with $q$ (note $\phi$ itself can be degenerate).

\subsection{The scheme $\frG_{\l}$}
According to \S\ref{ss:cal}, we need to describe the variety $\frG_{\l}$ for the minuscule $\lambda=(1,0,\cdots)$ (corresponding to the standard representation of $\dualG=\GL_{n}$) and the morphism $(f',f''):\frG_{\l}\to \tLab_m\times V_m$. 

Let $\calE=M\otimes\calO_{\tX}$ with Hermitian form (into $\calO_{\tX}$) given by a formula  similar to \eqref{herm}. There is an increasing filtration $F_{*}$ on the fiber of $\calE$ at $\infty$ defined by $F_{\leq i}=\sum_{j\leq i}M_{i}$. We have the tautological trivializations $\Gr^{F}_{i}\calE=M_{i}$. There is a decreasing filtration $F^{*}$ on the fiber of $\calE$ at $0$ defined by $F^{\geq i}=\sum_{j\geq i}M_{i}$. The triple $(\calE,F_{*}, F^{*})$ defines the open point of $\Bun_{G}(\wt\bP_{0},\bP_{\infty}^{+})$. 

The group ind-scheme $\frG$ is the group of unitary automorphisms of $(\calE|_{\tX-\{\pm1\}}, F_{*}, F^{*})$ that induces the identity on the associated graded of $F_{*}$. The locus $\frG_{\l}$ consists of those $g\in\frG\subset G(F')$ ($F'=k(t^{1/2})$) that have a simple pole at $t^{1/2}=1$ with residue of rank one. Such $g$ can be written uniquely as
\begin{equation*}
g=\frac{t^{1/2}}{t^{1/2}-1}A-\frac{1}{t^{1/2}-1}B
\end{equation*}
for $A,B\in\GL(M)$.

\begin{lemma}\label{l:upre}
\begin{enumerate}
\item The scheme $\frG_{\l}$ classifies pairs $(A,B)\in\Og(M,q)\times\Og(M,q)$ such that
\begin{itemize}
\item $(Ax,By)=(Bx,Ay)$ for all $x,y\in M$.
\item $A$ lies in the unipotent radical $U(F_{*})$ of the parabolic $P(F_{*})\subset\Og(M,q)$ preserving the filtration $F_{*}$ on $M$; $B$ lies in the parabolic $P(F^{*})\subset \Og(M,q)$ preserving the filtration $F^{*}$ on $M$.
\item $A-B$ has rank one. 
\end{itemize}

\item The map $(f',f''):\frG_{\l}\to\tLab_{m}\times V_{m}$ is given by
\begin{eqnarray*}
f'(A,B)=(\det(B_{0,0}), \cdots,\det(B_{\ell,\ell}));\\
f''(A,B)=(A_{0,1}, A_{1,2}, \cdots, A_{\ell-1,\ell}, -B_{\ell,-\ell})
\end{eqnarray*}
in the block presentation of $A,B$ under the decomposition \eqref{ud}.

\item Consider the morphism $j:\frG_{\l}\to\PP(M)$ sending $(A,B)\in\frG_{\l}$ to the line in $M$ that is the image of the rank one endomorphism $C:=I-A^{-1}B\in \End(M)$. Then $j$ is an open embedding.
\end{enumerate}
\end{lemma}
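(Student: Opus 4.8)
\emph{Parts (1) and (2)} are a matter of unwinding the definitions. A rational automorphism $g\in\frG\subset G(k(t^{1/2}))$ with a simple pole of rank-one residue at $\wt 1$ has, by $\mu_{2}$-equivariance (i.e.\ unitarity), the mirror behaviour at the other preimage of $1\in X$; since the coweight $\l$ of the standard representation of $\dualG=\GL_{n}$ is minuscule, this forces $g$ to have the asserted form $g=\tfrac{t^{1/2}}{t^{1/2}-1}A-\tfrac{1}{t^{1/2}-1}B$, with $A=g(\infty)$ and $B=g(0)$. The requirements that $g$ be unitary (read off from \eqref{herm}), be regular at $0$ and $\infty$ and respect the level structures, and have rank-one residue translate precisely into the three bullet conditions of (1), while evaluating at $0$ and $\infty$ and projecting to $\tLab_{m}$ and $V_{m}$ yields the formulas of (2). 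The substance is \emph{part (3)}, which I would prove by reducing to linear algebra over $k$ and then recognising $\frG_{\l}$, via $j$, as the big cell of a pair of opposite parabolics.

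First I would pin down the element $u:=A^{-1}B\in\Og(M,q)$. Since $A-B$ has rank one, so does $C=I-A^{-1}B=A^{-1}(A-B)$, and $j(A,B)=\textup{Im}(C)$. Write $u(x)=x+\mu(x)\,w$ with $0\neq w$ spanning $\textup{Im}(I-u)$ and $\mu\in M^{*}$. Orthogonality of $u$ gives $\mu(x)\bigl((x,w)+\mu(x)q(w)\bigr)=0$ for all $x$; using $\chk\neq2$ and the nondegeneracy of $(\cdot,\cdot)$ one deduces $q(w)\neq0$ and $\mu=-q(w)^{-1}(\cdot,w)$, so that $u=r_{L}$ is the orthogonal reflection in the hyperplane $w^{\perp}$, where $L=\langle w\rangle=\textup{Im}(C)$ is non-isotropic. (The condition $(Ax,By)=(Bx,Ay)$ is automatic here, being equivalent to $u^{2}=I$.) Hence $C=I-r_{L}$ is recovered from the line $L$ alone, so $j$ factors through the open subscheme $\PP(M)^{\textup{ni}}\subset\PP(M)$ of non-isotropic lines, and $L=j(A,B)$ determines $A^{-1}B=r_{L}$.

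Next I would reconstruct $(A,B)$ from $L$. The filtrations $F_{*}$ and $F^{*}$ of $M$ are opposite ($F_{\leq i}\oplus F^{\geq i+1}=M$ for every $i$) with the same associated graded, so $P^{-}:=P(F_{*})$ and $P^{+}:=P(F^{*})$ are opposite parabolic subgroups of $\Og(M,q)$ with common Levi $\tL_{m}$; consequently $U^{-}:=U(F_{*})$ satisfies $U^{-}\cap P^{+}=\{1\}$, and the product map $U^{-}\times P^{+}\to\Og(M,q)$ is an open immersion onto the big cell $\Omega:=U^{-}P^{+}$, with inverse $(u^{-},p^{+})\colon\Omega\isom U^{-}\times P^{+}$. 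For a non-isotropic line $L$, a pair $(A,B)\in\frG_{\l}$ with $A^{-1}B=r_{L}$ is the same datum as an $A\in U^{-}$ with $B:=Ar_{L}\in P^{+}$ (the remaining conditions of (1) are then automatic, since $r_{L}=r_{L}^{*}$ and $A-B=A(I-r_{L})$ has rank one), and such an $A$ exists iff $r_{L}\in\Omega$, in which case it is unique, namely $A=u^{-}(r_{L})^{-1}$ and $B=p^{+}(r_{L})$. Therefore $j$ is a bijection of $\frG_{\l}$ onto $W:=\{L\in\PP(M)^{\textup{ni}}:r_{L}\in\Omega\}$, which is open in $\PP(M)$ (it is the intersection of $\PP(M)\setminus\{q=0\}$ with the preimage of $\Omega$ under the morphism $\PP(M)^{\textup{ni}}\to\Og(M,q)$, $L\mapsto r_{L}$), and the inverse $W\to\frG_{\l}$, $L\mapsto\bigl(u^{-}(r_{L})^{-1},p^{+}(r_{L})\bigr)$, is a morphism. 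Hence $j$ is an open immersion.

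The one step carrying real content is the linear-algebra lemma of the second paragraph: that the defining relations of $\frG_{\l}$ force $A^{-1}B$ to be an orthogonal reflection, which is exactly what makes $C=I-A^{-1}B$ reversible from its image line; everything afterwards is a formal consequence of the opposite-parabolic/big-cell picture. (When $m=2$ one has $\ell=0$, $F_{*}=F^{*}$ trivial, $U^{-}=\{1\}$ and $\Omega=\Og(M,q)$, and the claim degenerates to the evident statement that $\frG_{2}=\PP(M)^{\textup{ni}}$, embedded via $j$ in $\PP(M)$ as the complement of the quadric $\{q=0\}$.)
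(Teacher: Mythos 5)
Your proposal is correct and follows essentially the same route as the paper: parts (1)--(2) by reading off unitarity and the values of $g$ at $0$ and $\infty$, and part (3) from the openness of $U(F_{*})\times P(F^{*})$ in $\Og(M,q)$, which is exactly the paper's one-line justification. The additional detail you supply --- that orthogonality plus the rank-one condition force $A^{-1}B$ to be the reflection $R_{[v]}$ in a non-isotropic line, so that $(A,B)$ is recovered from $j(A,B)$ via the big-cell decomposition --- is the same computation the paper carries out at the beginning of the proof of Proposition \ref{p:ucal}.
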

\begin{proof}
(1) The fact $g=\frac{t^{1/2}}{t^{1/2}-1}A-\frac{1}{t^{1/2}-1}B$ is unitary is equivalent to $A,B\in\Og(M,q)$ and $(Ax,By)=(Bx,Ay)$. The residue of $g$ at $t^{1/2}=1$ is $A-B$, which hence has rank one.

(2) The value of $g$ at $t^{1/2}=0$ is $B$, hence we have the formula for $f'(A,B)$. Expanding $g$ at  $t^{1/2}=\infty$ using the local uniformizer $t^{-1/2}$, we get $g=A+t^{-1/2}(A-B)+O(t^{-1})$, which gives the formula for $f''(A,B)$.

(3) The map $j$ is an open embedding since $U(F_{*})\times P(F^{*})$ is open in $\Og(M,q)$.
\end{proof}

For $i=0,\cdots, \ell$, let $q_{[-i,i]}$ be the restriction of the  quadratic form $q$ to $M_{-i}\oplus\cdots\oplus M_{i}$, and then extended to $M$ by zero on the rest of the direct summands (so that $q_{[-\ell,\ell]}=q$). 

\begin{prop}\label{p:ucal}
\begin{enumerate}
\item Under the embedding $j:\frG_{\l}\incl\PP(M)$, $\frG_{\l}$ is the complement of the divisors $q_{0}=0, q_{[-1,1]}=0,\cdots, q_{[-\ell+1,\ell-1]}=0$ and $q=0$. The map $(f',f''):\PP(M)\supset\frG_{\l}\to\tLab_{m}\times V_{m}$ is given by 
\begin{eqnarray}
\label{uf'} f'([v])=(-1, \frac{q_{0}(v)}{q_{[-1,1]}(v)},\frac{q_{[-1,1]}(v)}{q_{[-2,2]}(v)},\cdots,\frac{q_{[-\ell+2,\ell-2]}(v)}{q_{[-\ell+1,\ell-1]}(v)},\frac{q_{[-\ell+1,\ell-1]}(v)}{q(v)});\\
\label{uf''} f''([v])=(\frac{(-, v_{-1})}{q_{0}(v)}v_{0}, \frac{(-, v_{-2})}{q_{[-1,1]}(v)}v_{1}, \cdots, \frac{(-,v_{-\ell})}{q_{[-\ell+1,\ell-1]}(v)}v_{\ell-1}, \frac{(-,v_{\ell})}{q(v)}v_{\ell}).
\end{eqnarray}
\item The local system $\Kl^{\St}_{\dualG, \bP_{m}}(\chi)$ on $V_{m}^{*,\rs}$ attached to the unitary group $G$, the admissible parahoric $\bP_{m}$, the character $\chi=(\chi_{0}, \chi_{1},\cdots,\chi_{\ell})$ (where $\chi_{0}$ has order two) and the standard representation of the dual group $\dualG=\GL_{n}$ is the Fourier transform of the complex $f''_{!}f'^{*}\calL_{\chi}[n-1](\frac{n-1}{2})$.
\end{enumerate}
\end{prop}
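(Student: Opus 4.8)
The plan is to deduce part (2) quickly from Proposition \ref{p:Four} once part (1) is established, so the real work is in part (1). For part (1), the starting point is Lemma \ref{l:upre}: since $j:\frG_{\l}\to\PP(M)$ is an open embedding, I only need to identify its image and to express $f'$ and $f''$ in terms of the line $[v]=j(A,B)$. The key observation is that $(A,B)\in\frG_{\l}$ is completely pinned down by $[v]$. Writing $C=I-A^{-1}B=v\otimes\varphi$ for a linear functional $\varphi$ and expanding the orthogonality relation $(A^{-1}Bx,A^{-1}By)=(x,y)$, one finds that necessarily $q(v)\neq0$ and $\varphi=q(v)^{-1}(v,-)$, so that $A^{-1}B=s_{v}$ is the orthogonal reflection in the hyperplane $v^{\perp}$; the remaining relation $(Ax,By)=(Bx,Ay)$ of Lemma \ref{l:upre}(1) is then automatic, since $s_{v}$ is self-adjoint. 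Hence the image of $j$ is the set of lines $[v]$ with $q(v)\neq0$ for which $s_{v}$ admits a factorization $s_{v}=A^{-1}B$ with $A\in U(F_{*})$ and $B\in P(F^{*})$; as $U(F_{*})$ is the unipotent radical of the parabolic $P(F_{*})$ and $P(F^{*})$ is the opposite parabolic, this factorization exists and is unique exactly when $s_{v}$ lies in the open Birkhoff cell $U(F_{*})P(F^{*})$, i.e.\ when $s_{v}(F^{\geq k})$ is transverse to $F_{\leq k-1}$ for every $k$.

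The combinatorial heart of part (1) — and the step I expect to be the most laborious — is to make this open‑cell condition explicit and to solve the factorization. Using that orthogonality gives $(F^{\geq k})^{\perp}=F^{\geq 1-k}$, the transversality conditions for $k$ and $1-k$ are equivalent, leaving exactly $\ell+1$ of them. I would solve $s_{v}=A^{-1}B$, equivalently $B=As_{v}$, block by block with respect to the decomposition \eqref{ud}: since $A$ is the identity on $\mathrm{gr}\,F_{*}$ and $B$ preserves $F^{*}$, the equation becomes a triangular recursion for the blocks $B_{j,j}$, $A_{i,i+1}$ and $B_{\ell,-\ell}$, in which the $i$‑th step requires inverting a partial quadratic form of $v$; the recursion goes through precisely when $q_{0}(v),q_{[-1,1]}(v),\dots,q_{[-\ell+1,\ell-1]}(v)$ and $q(v)$ are all nonzero. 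This identifies $\frG_{\l}$, via $j$, with the stated complement of divisors in $\PP(M)$, and substituting the resulting expressions for the blocks into the formulas of Lemma \ref{l:upre}(2) yields \eqref{uf'} and \eqref{uf''} (in particular the first coordinate of $f'$ is $\det B_{0,0}=-1$). This is a finite, if fiddly, linear-algebra computation; the only real nuisance is the bookkeeping of the orthogonality constraints.

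Granting part (1), part (2) is essentially immediate: apply Proposition \ref{p:Four} to the minuscule $\l=(1,0,\dots,0)$, which corresponds to the standard representation $\St$ of $\dualG=\GL_{n}$. Then $\Gr_{\leq\l}\cong\PP^{n-1}$ is smooth, so $\IC_{\l}$ is the constant sheaf $\const{\Gr_{\leq\l}}[n-1](\tfrac{n-1}{2})$, and hence $\ev_{\wt{1}}^{*}\IC_{\l}\cong\const{\frG_{\l}}[n-1](\tfrac{n-1}{2})$ on $\frG_{\l}$ (indeed, up to the identification $\PP(M)\cong\Gr_{\leq\l}$, the map $\ev_{\wt{1}}$ is essentially $j$ itself). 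Therefore $f''_{!}(f'^{*}\calL_{\chi}\otimes\ev_{\wt{1}}^{*}\IC_{\l})\cong (f''_{!}f'^{*}\calL_{\chi})[n-1](\tfrac{n-1}{2})$, and Proposition \ref{p:Four} identifies $\Kl^{\St}_{\dualG,\bP_{m}}(\chi)$ with the Fourier transform of this complex restricted to $V^{*,\st}_{m}$. The final point is to observe that $V^{*,\st}_{m}$ coincides with the locus $V_{m}^{*,\rs}$ of \S\ref{ss:ustable} where the pencil spanned by $q_{0}$ and $q_{0}'$ is nondegenerate at $n$ distinct points, which is exactly the stability condition; this gives the statement.
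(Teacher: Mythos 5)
Your proposal follows essentially the same route as the paper's proof: identify $C=I-A^{-1}B$ as the reflection $R_{[v]}$ with $q(v)\neq 0$, solve $B=AR_{[v]}$ with $A\in U(F_{*})$, $B\in P(F^{*})$ by the triangular block recursion in the big cell (this is exactly the paper's inductive construction $A=A_{1}\cdots A_{\ell}$, whose denominators $q_{0}(v),\dots,q_{[-\ell+1,\ell-1]}(v)$ cut out the removed divisors), read off \eqref{uf'} and \eqref{uf''} from Lemma \ref{l:upre}(2), and get (2) from Proposition \ref{p:Four} with $\IC_{\l}$ the shifted constant sheaf. The only quibble is bookkeeping: the big-cell transversality gives $\ell$ independent conditions, which together with $q(v)\neq 0$ account for the $\ell+1$ removed divisors (rather than ``$\ell+1$ transversality conditions''), but this does not affect the argument.
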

\begin{proof}
(1) For $(A,B)\in\frG_{\l}$, let $C=I-A^{-1}B$ which has rank one. The condition $(Ax,By)=(Bx,Ay)$ implies $(Cx,y)=(x,Cy)$ (for all $x,y\in M$). This together with the fact that $I-C\in \Og(M,q)$ implies that $C(x)=\frac{(x,v)}{q(v)}v$ for some $v\in M$ with $q(v)=(v,v)/2\neq0$. In other words, $I-C=R_{[v]}$, the orthogonal reflection in the direction of $v$. Note that $j(A,B)=[v]\in\PP(M)$. 

We inductively find $A=A_{1}\cdots A_{\ell-1} A_{\ell}$ such that $AR_{[v]}\in \overline{P}$. Here $A_{i}$ is the identity on $M_{j}$ for all $j\neq i$ and sends $x_{i}\in M_{i}$ to $x_{i}\mod F_{<i}$. It is easy to check that
\begin{eqnarray*}
A_{\ell}(x_{\ell})&=&x_{\ell}+\frac{(x_{\ell}, v_{-\ell})}{q_{[-\ell+1,\ell-1]}(v)}v_{\leq \ell-1};\\
A_{\ell-1}(x_{\ell-1})&=&x_{\ell-1}+\frac{(x_{\ell-1}, v_{-\ell+1})}{q_{[-\ell+2,\ell-2]}(v)}v_{\leq \ell-2};\\
&&\cdots;\\
A_{1}(x_{1})&=&x_{1}+\frac{(x_{1}, v_{-1})}{q_{0}(v)}v_{\leq0}.
\end{eqnarray*}
Here we write $v_{\leq i}$ for the projection of $v$ to the factor $\oplus_{j\leq i}M_{j}$ of $M$.
Taking the blocks $A_{i,i+1}$ we get the formula for $f''([v])$ except for the last entry. The last entry of $f''([v])$ is the negative of the lower-left corner block $B_{\ell, -\ell}$, which is the same as the corner block of $R_{[v]}$. The formula we get is exactly \eqref{uf''}. The action of $B=AR_{[v]}$ on the associated graded $M_{i}=\Gr^{i}_{F^{*}}M$ is given by
\begin{equation*}
B_{i,i}(x_{i})=x_{i}-\frac{(x_{i},v_{-i})}{q_{[-i,i]}(v)}v_{i}.
\end{equation*}
In particular, $B_{0,0}$ is the reflection $R_{v_{0}}$ on $M_{0}$. Taking determinants we get the formula for $f'([v])$ as in \eqref{uf'}.

(2) For the coweight $\l$, we have $\IC_{\l}=\Ql[n-1](\frac{n-1}{2})$. Therefore (2) follows from Proposition \ref{p:Four}.
\end{proof}

A direct consequence of Proposition \ref{p:ucal} is the following.
\begin{cor}\label{c:uKl} Let $\phi=(\phi_{-\ell},\cdots,\phi_{\ell})\in V^{*,\st}_{m}(k)$ be a stable functional. Recall $\frG_{\l}\cong\PP(M)-\cup_{i=0}^{\ell}Q(q_{[-i,i]})$, the complement of the quadrics defined by the $q_{[-i,i]}$. Let $f_{\phi}: \tpline\times \frG_{\l}\to \AA^{1}$ be given by
\begin{equation*}
f_{\phi}(x,[v])=\sum_{i=0}^{\ell-1}\frac{(\phi_{i}v_{i}, v_{-i-1})}{q_{[-i,i]}(v)}+\frac{\phi_{\ell}(v_{\ell})}{q(v)}x.
\end{equation*}
Note that $\phi_{\ell}(-)$ is a quadratic form. Let $\pi:\tpline\times\frG_{\l}\to\tpline$ be the projection.
Then we have an isomorphism over $\tpline$
\begin{equation}\label{uStphi}
\Kl^{\St}_{\dualG,\bP_{m}}(1,\phi)\cong\pi_{!}f_{\phi}^{*}\AS_{\psi}[n-1]\left(\frac{n-1}{2}\right).
\end{equation}
\end{cor}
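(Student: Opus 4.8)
The plan is to deduce the corollary directly from Proposition~\ref{p:ucal}(2) (the Fourier description of the master sheaf) together with Corollary~\ref{c:Klphi}, which realizes $\Kl^{\St}_{\dualG,\bP_m}(1,\phi)$ as the pullback of the master sheaf along $a_\phi:\tpline\cong\grot\to V^{*}_m$, $\l\mapsto\l\cdot_{\rot}\phi$. The only real work is then to unwind the Fourier--Deligne transform by proper base change, so that the relevant cohomology is taken fibrewise over the explicit open subset $\frG_\l\subset\PP(M)$ of Proposition~\ref{p:ucal}(1), and to check that the resulting phase function is $f_\phi$.

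First, since $\chi=1$ the Kummer sheaf $\calL_\chi$ is trivial, so the complex in Proposition~\ref{p:ucal}(2) reduces to $f''_{!}\Ql[n-1](\tfrac{n-1}{2})$ and we get $\Kl^{\St}_{\dualG,\bP_m}(1)\cong\Four_\psi(f''_{!}\Ql)[n-1](\tfrac{n-1}{2})$ on $V^{*,\rs}_m$. The $\grot$-orbit of a stable functional $\phi$ stays stable, hence lies in $V^{*,\rs}_m$, so by Corollary~\ref{c:Klphi} we may pull this isomorphism back along $a_\phi$. Writing $\Four_\psi(-)=\pr_{V^{*}_m,!}\bigl(\pr^{*}_{V_m}(-)\otimes\jiao{\cdot,\cdot}^{*}\AS_\psi\bigr)$ and applying the projection formula and proper base change along $f''\times\id$, one identifies $\Four_\psi(f''_{!}\Ql)$ with $\pr_{V^{*}_m,!}\,g_0^{*}\AS_\psi$, where $g_0:\frG_\l\times V^{*}_m\to\Ga$ sends $(\gamma,\xi)$ to $\jiao{f''(\gamma),\xi}$. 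A further proper base change along $a_\phi$ then yields
\begin{equation*}
\Kl^{\St}_{\dualG,\bP_m}(1,\phi)\cong a_\phi^{*}\Four_\psi(f''_{!}\Ql)[n-1]\Bigl(\tfrac{n-1}{2}\Bigr)\cong\pi_{!}\,g^{*}\AS_\psi[n-1]\Bigl(\tfrac{n-1}{2}\Bigr),
\end{equation*}
with $g:\tpline\times\frG_\l\to\Ga$, $g(x,\gamma)=\jiao{f''(\gamma),\,x\cdot_{\rot}\phi}$.

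It remains to identify $g$ with $f_\phi$ under $\frG_\l\cong\PP(M)-\bigcup_{i=0}^{\ell}Q(q_{[-i,i]})$. For this I would insert the explicit formula \eqref{uf''} for $f''([v])$ and pair componentwise against $\phi=(\phi_{-\ell},\dots,\phi_\ell)$, using the trace pairings on the $\Hom$-summands of $V_m$ and the evident pairing of the quadratic form $\phi_\ell$ against the $\Sym^2(M_\ell)$-summand; this produces $\jiao{f''([v]),\phi}=\sum_{i=0}^{\ell-1}\tfrac{(\phi_i v_i,v_{-i-1})}{q_{[-i,i]}(v)}+\tfrac{\phi_\ell(v_\ell)}{q(v)}$. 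The single factor of $x$ then comes from the loop rotation: $\grot$ acts trivially on the summands $\Hom(M_{i+1},M_i)$ of $V_m$ and with weight $-1$ on $\Sym^2(M_\ell)$ — equivalently it fixes the $\Hom$-summands of $V^{*}_m$ and scales $\Sym^2(M_\ell)^{*}$ with weight $1$ — since among the affine roots occurring in $V_m$ only the one producing $\Sym^2(M_\ell)$ has nonzero level (equal to one); hence $x\cdot_{\rot}\phi=(\phi_{-\ell},\dots,\phi_{\ell-1},x\phi_\ell)$ and $g(x,[v])=f_\phi(x,[v])$. The only delicate bookkeeping is this $\grot$-normalization — the level-one weight on the $\Sym^2(M_\ell)$-direction is exactly what becomes the factor of $x$ — together with keeping the sign conventions straight in the trace pairing that identifies $V_m$ with $(V^{*}_m)^{*}$; granting Propositions~\ref{p:ucal} and~\ref{p:Four} and Corollary~\ref{c:Klphi}, everything else is formal.
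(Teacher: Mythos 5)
Your argument is correct and is essentially the paper's own proof: both deduce the statement from Corollary \ref{c:Klphi} together with Proposition \ref{p:ucal}, use proper base change to rewrite $a_\phi^{*}\Four_\psi(f''_{!}\Ql)$ as $\pi_{!}$ of a pulled-back Artin--Schreier sheaf, and then identify the phase function with $f_\phi$ via the explicit formula \eqref{uf''} and the fact that $\grot$ scales only the $\Sym^{2}(M_\ell^{*})$-component of $V^{*}_{m}$. Your extra remarks on the trace pairing and the rotation weight are just a more detailed version of the same computation.
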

\begin{proof}
We identify $\tpline$ with the torus $\grot$. The action of $\grot$ on $V^{*}_{m}$ is by scaling the last component $\Sym^{2}(M_{\ell}^{*})$. Let $a_{\phi}: \grot\to V^{*,\st}_{m}$ be given by the $\grot$-orbit of $\phi$. It takes the form $x\mapsto(\phi_{0},\cdots, \phi_{\ell-1}, x\phi_{\ell})$. By Corollary \ref{c:Klphi} and Proposition \ref{p:ucal}(2),
\begin{equation*}
\Kl^{\St}_{\dualG,\bP_{m}}(\chi,\phi)=a_{\phi}^{*}\Kl^{\St}_{\dualG, \bP_{m}}(\chi)\cong a_{\phi}^{*}\Four_{\psi}(f''_{!}\Ql)[n-1]\left(\frac{n-1}{2}\right).
\end{equation*}
By proper base change, the last term is isomorphic to $\pi_{!}b_{\phi}^{*}\AS_{\psi}[n-1]$ where $b_{\phi}:\grot\times\frG_{\l}\to \AA^{1}$ is given by $(x,[v])\mapsto x\cdot_{\rot}f''([v])$. The explicit formula of $f''$ in Proposition \ref{p:ucal}(1) shows that $b_{\phi}=f_{\phi}$. Therefore \eqref{uStphi} holds.
\end{proof}

When $m=2$, we may describe the local system $\Kl^{\St}_{\dualG, \bP}(1, \phi)$ more explicitly. 

\begin{prop}\label{p:um2} Let $\phi\in V^{*,\st}_{2}(k)$ such that the pencil of quadrics spanned by $q$ and $\phi$ is degenerate exactly at $\phi-\l_{i}q$ for distinct elements $\lambda_1,\cdots,\lambda_n\in\bark$.
\begin{enumerate}
\item Consider the morphism
\begin{eqnarray*}
f_\phi:\PP(M)-Q(q) & \to & \AA^1 \\
{[v]} &\mapsto & \frac{\phi(v)}{q(v)}.
\end{eqnarray*}
The local system $\Kl^{\St}_{\dualG, 2}(1, \phi)$ over $\tpline\cong\grot$ is the restriction of the Fourier transform of $f_{\phi,!}\Ql[n-1](\frac{n-1}{2})$ from $\AA^{1}$ to $\grot$.
\item When $n$ is odd, $\Kl^{\St}_{\dualG, 2}(1, \phi)$ over $\Gm\otimes_{k}\bark$ is isomorphic to $\oplus_{i=1}^{n}\AS_{\psi_{i}}|_{\grot}$, where $\psi_i(t)=\psi(\lambda_it)$.
\item When $n$ is even, then $\Kl^{\St}_{\dualG, 2}(1, \phi)$ over $\Gm\otimes_{k}\bark$ is isomorphic to $\Four_{\psi}(\calL_{\phi}[1])|_{\grot}$, where $\calL_{\phi}$ is the middle extension of the rank one local system on $\AA^1-\{\lambda_1,\cdots,\lambda_n\}$ with monodromy equal to $-1$ around each puncture $\lambda_i$.
\end{enumerate}
\end{prop}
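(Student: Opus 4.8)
The plan is as follows.

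\emph{Part (1).} This is just Corollary \ref{c:uKl} specialized to $m=2$, where $\ell=0$, $M=M_{0}$ and $V_{2}=\Sym^{2}(M^{*})$. By Proposition \ref{p:ucal}(1) the scheme $\frG_{\lambda}$ (for the minuscule $\lambda$) is $\PP(M)-Q(q)$, and the function $f_{\phi}$ of Corollary \ref{c:uKl} becomes $f_{\phi}(x,[v])=x\cdot\phi(v)/q(v)$. Since this factors as $\grot\times(\PP(M)-Q(q))\xrightarrow{\id\times h}\grot\times\AA^{1}\xrightarrow{\textup{mult}}\AA^{1}$ with $h([v])=\phi(v)/q(v)$, the projection formula applied to $\pi=\pr_{1}\circ(\id\times h)$ gives $\pi_{!}f_{\phi}^{*}\AS_{\psi}=\pr_{1,!}\big(\textup{mult}^{*}\AS_{\psi}\otimes\pr_{2}^{*}h_{!}\Ql\big)=\Four_{\psi}(h_{!}\Ql)|_{\grot}$, which together with Corollary \ref{c:uKl} is exactly the assertion (with $h=f_{\phi}$ in the notation of the statement).

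\emph{Parts (2) and (3): setup.} By Part (1) and proper base change the stalk of $\Kl^{\St}_{\dualG,2}(1,\phi)$ at $x\in\grot(\bark)$ is $\cohoc{*}{\PP(M)-Q(q),\AS_{\psi}(x\phi/q)}[n-1](\tfrac{n-1}{2})$. First I would pass to a double cover: over $\bark$ the map $v\mapsto[v]$ identifies $\PP(M)-Q(q)$ with the free quotient $\{q=1\}/\mu_{2}$ (the action $v\mapsto-v$), and $\phi/q$ pulls back to the $\mu_{2}$-invariant function $\phi$, so the stalk becomes $\cohoc{*}{\{q=1\},\AS_{\psi}(x\phi)}^{\mu_{2}}[n-1](\tfrac{n-1}{2})$. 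Next I would linearize by a Lagrange multiplier: integrating out $t$ over $\AA^{n}_{v}\times\AA^{1}_{t}$ gives $\cohoc{j}{\{q=1\},\AS_{\psi}(x\phi)}\cong\cohoc{j+2}{\AA^{n}_{v}\times\AA^{1}_{t},\AS_{\psi}\!\big(x\phi(v)+t(q(v)-1)\big)}(1)$. Diagonalizing $q=\sum_{i=1}^{n}v_{i}^{2}$ and $\phi=\sum_{i}\lambda_{i}v_{i}^{2}$ simultaneously — possible exactly because $\det(\phi-sq)$ has the $n$ distinct roots $\lambda_{i}$ by hypothesis — the integrand becomes $\AS_{\psi}(-t)\otimes\bigotimes_{i}\AS_{\psi}\big((x\lambda_{i}+t)v_{i}^{2}\big)$, and a relative Künneth computation over $\AA^{1}_{t}$ reduces everything to the single ``Gauss--sum sheaf'' $\calG$ on $\AA^{1}_{a}$ with $\calG_{a}=\cohoc{*}{\AA^{1}_{v},\AS_{\psi}(av^{2})}$. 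Using that $(v\mapsto v^{2})_{!}\Ql=\Ql_{\AA^{1}}\oplus j_{!}\calK$ with $\calK$ the order-two Kummer sheaf on $\Gm$, one gets $\calG=\delta_{0}(-1)[-2]\oplus\calH$ with $\calH:=\Four_{\psi}(j_{!}\calK)$ a rank-one middle-extension sheaf on $\AA^{1}_{a}$, lisse on $\Gm$ with monodromy $-1$ at $0$ and at $\infty$ and placed in degree $1$; the deck $\mu_{2}$ acts trivially on the $\delta_{0}(-1)[-2]$-summand and by the sign on $\calH$.

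\emph{Parts (2) and (3): conclusion.} Then I would expand $\bigotimes_{i}a_{i}^{*}\calG$ (with $a_{i}\colon t\mapsto t+x\lambda_{i}$) over subsets $S\subseteq\{1,\dots,n\}$: the terms with $|S|\ge2$ carry skyscrapers at the distinct points $-x\lambda_{i}$ ($i\in S$) and vanish, while the diagonal $\mu_{2}$ acts on the $S$-summand by $(-1)^{n-|S|}$. Hence the $\mu_{2}$-invariants retain only $S=\emptyset$ when $n$ is even, and only the singletons $S=\{i_{0}\}$ when $n$ is odd. For $n$ odd, the $S=\{i_{0}\}$ summand is a skyscraper at $t=-x\lambda_{i_{0}}$ whose cohomology against $\AS_{\psi}(-t)$ is $\AS_{\psi}(x\lambda_{i_{0}})=\AS_{\psi_{i_{0}}}|_{x}$; summing over $i_{0}$ and absorbing the shift $[n-1]$ gives $\Kl^{\St}_{\dualG,2}(1,\phi)\cong\bigoplus_{i=1}^{n}\AS_{\psi_{i}}|_{\grot}$ over $\bark$, i.e.\ statement (2) (and the $n$ summands match $\rk\St=n$). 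For $n$ even, the surviving summand $\bigotimes_{i}a_{i}^{*}\calH$ is, away from the $-x\lambda_{i}$, a rank-one local system with monodromy $-1$ at each $-x\lambda_{i}$, hence equals $\nu_{x}^{*}\calL_{\phi}$ for the dilation $\nu_{x}\colon t\mapsto-t/x$ (both are middle extensions with the same monodromy data); its cohomology against $\AS_{\psi}(-t)$ is then $\Four_{\psi}(\calL_{\phi})_{x}$ by compatibility of $\Four_{\psi}$ with dilations, and after absorbing the shift $[n-1]$ (the remaining Tate twists being harmless over $\bark$) one obtains $\Kl^{\St}_{\dualG,2}(1,\phi)\cong\Four_{\psi}(\calL_{\phi}[1])|_{\grot}$, i.e.\ statement (3).

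\emph{Main obstacle.} The content-bearing step is the reduction to $\calG$ together with the identification of $\bigotimes_{i}a_{i}^{*}\calH$ with $\nu_{x}^{*}\calL_{\phi}$: one must propagate the relative Künneth isomorphism over $\AA^{1}_{t}$ and the dilation-compatibility of $\Four_{\psi}$ carefully enough to control every cohomological shift and Tate twist, since the normalizing $[n-1](\tfrac{n-1}{2})$ is exactly what makes the degrees (and, over $k$, the weights) come out right. An alternative route, avoiding the Lagrange trick, is to analyze $f_{\phi,!}\Ql$ directly via the pencil of quadrics $Q(\phi-sq)$: over $\AA^{1}\smallsetminus\{\lambda_{i}\}$ the only non-constant summand of $R^{\bullet}f_{\phi,!}\Ql$ is the primitive middle cohomology of the smooth fibres — vanishing for $n$ odd, and equal to $\calL_{\phi}$ (by Picard--Lefschetz at the simple node) for $n$ even — and $\Four_{\psi}$ followed by restriction to $\grot$ kills the constant part; there, however, the bookkeeping of the fixed divisor $Q(q)\cap Q(\phi)$ and of the degenerate fibres becomes the main difficulty instead.
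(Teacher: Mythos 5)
Your proposal is correct, and for parts (2)--(3) it takes a genuinely different route from the paper. Part (1) is essentially the paper's argument (the paper notes $f_\phi=\phi\circ f''$ and invokes Proposition \ref{p:ucal}(2); your specialization of Corollary \ref{c:uKl} plus the projection formula is the same computation). For (2)--(3) the paper does not touch Gauss sums at all: it compactifies $f_\phi$ to the pencil $\tilf_\phi:\Bl_Z\PP(M)\to\PP^1$, observes that $f_{\phi,!}\Ql$ and $\tilf_{\phi,*}\Ql$ differ by constant sheaves (which die after $\Four_\psi$ and restriction to $\grot$), and then analyzes the quadric bundle $\tilf_\phi$ via the powers $c_1^i:\Ql(-i)\to\bR^{2i}\tilf_{\phi,*}\Ql$, extracting skyscrapers at the $\lambda_i$ when $n$ is odd and, by Picard--Lefschetz, the sheaf $\calL_\phi$ when $n$ is even --- i.e.\ the paper's proof is precisely the ``alternative route'' you sketch at the end. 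Your main route instead computes the Fourier transform fibrewise by simultaneous diagonalization of $(q,\phi)$, the $\mu_2$-cover $\{q=1\}$, the Lagrange multiplier, and the splitting of the Gauss-sum sheaf $\calG=\delta_0(-1)[-2]\oplus\calH$, with the parity of $n$ entering through the sign of the deck action. What your approach buys is an elementary exponential-sum computation that exhibits $\bigoplus_i\AS_{\psi_i}$ and the Kummer nature of $\calL_\phi$ directly; what the paper's approach buys is that one works once and for all with complexes on $\AA^1$ (no family over $\grot$ to carry along) and disposes of all constant summands in one stroke.

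Two points in your sketch deserve care when written out. First, a stalkwise identification at each $x\in\grot(\bark)$ does not by itself give an isomorphism of local systems over $\grot$; the Künneth/Gauss-sum argument must be run relative to $\grot$ (everything you use --- the $\mu_2$-cover, the decomposition of $\calG$, the skyscrapers along the graphs $t=-x\lambda_{i_0}$, the identification of $\bigotimes_i a_i^*\calH$ with the pullback of $\calL_\phi$ under $(x,t)\mapsto -t/x$ --- does make sense in the family, so this is a matter of bookkeeping rather than a gap). Second, in (2) the surviving singleton summand carries, besides $\AS_{\psi_{i_0}}$, the factors $(a_i^*\calH)|_{t=-x\lambda_{i_0}}$ for $i\neq i_0$, each of which is a quadratic Kummer sheaf in $x$; their product is geometrically trivial exactly because $n-1$ is even, and this cancellation should be said explicitly. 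Likewise the identification $\bigotimes_i a_i^*\calH\cong\nu_x^*\calL_\phi$ uses tameness at $\infty$ (available since $\calH|_{\Gm}$ is Kummer) in addition to the local monodromies at the punctures.
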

\begin{proof} We work over the base field $\bark$ without changing notation.
(1) One checks that the map $f_{\phi}$ is the composition $\PP(M)-Q(q)\xrightarrow{f''}V_{2}\xrightarrow{\phi}\AA^{1}$. Therefore (1) is a direct consequence of Proposition \ref{p:ucal}(2). 

(2)(3) The morphism $f_\phi$ can be compactified into a pencil of quadrics spanned by $\phi$ and $q$:\begin{equation*}
\tilf_\phi:\Bl_Z\PP(M)\to\PP^1
\end{equation*}
where $Z=Q(\phi)\cap Q(q)$ is the base locus of this pencil and $\Bl_Z\PP(M)$ is the blow-up of $\PP(M)$ along $Z$. The fiber of $\tilf_{\phi}$ over $[x,y]\in\PP^{1}$ is the quadric defined by $y\phi-xq=0$. Let $j:\PP(M)-Z\hookrightarrow\Bl_Z\PP(M)$ be the open immersion and $i:\PP^1\times Z\hookrightarrow\Bl_Z\PP(M)$ be the closed immersion of the exceptional divisor. Then $f_{\phi,!}\Ql$ is the restriction of $(\tilf_{\phi}j)_!\Ql$ to $\AA^1$. We have an distinguished triangle in $D^b_c(\PP^1)$:
\begin{equation*}
(\tilf_{\phi}j)_!\Ql\to\tilf_{\phi,*}\Ql\to \cohog{*}{Z}\otimes\const{\PP^1}\to(\tilf_{\phi}j)_!\Ql[1].
\end{equation*}
Therefore, $\tilf_{\phi,*}\Ql|_{\AA^1}$ and $f_{\phi,!}\Ql$ differ by a constant sheaf, hence their Fourier transforms are the same over $\grot=\tpline$. 

Since $\phi$ is a stable point, the degenerate quadrics $\phi-\l_{i}q$ in the pencil are projective cones over $(n-3)$-dimensional quadrics. All quadrics (degenerate or not) have vanishing odd degree cohomology. Let $c_1$ be the Chern class of the line bundle $\calO_{\PP(M)}(1)$, viewed as a morphism $\Ql(-1)\to\bR^2\tilf_{\phi,*}\Ql$. The $i$-th power of $c_1$ gives a morphism
\begin{equation*}
c_1^i:\Ql(-i)\to\bR^{2i}\tilf_{\phi,*}\Ql.
\end{equation*}

When $n$ is odd, $c_1^i$ is an isomorphism for $i\neq\frac{n-1}{2}$. When $i=\frac{n-1}{2}$, $c_1^i$ is an injection of sheaves with cokernel a sum of skyscraper sheaves supported at the critical values of $\tilf_\phi$, i.e., $\{\lambda_1,\cdots,\lambda_n\}$. Therefore, $f_{\phi,!}\Ql[n-1]$ differs from the sum of these skyscraper sheaves by constant sheaves. This implies (2).

When $n$ is even, $c_1^i$ is an isomorphism for $i\neq\frac{n-2}{2}$. When $i=\frac{n-2}{2}$, $c_1^i$ is an injection with cokernel $\calL_{\phi}$ being the extension by zero of a rank one local system on $\AA^{1}-\{\lambda_1,\cdots,\lambda_n\}$. By Picard-Lefschetz theory, $\calL_{\phi}$ has monodromy $-1$ around $\{\lambda_1,\cdots,\lambda_n\}$, hence $\calL_{\phi}[1]$ is an irreducible perverse sheaf on $\AA^{1}$. Now $f_{\phi,!}\Ql[n-1]$ differs from $\calL_{\phi}[1]$ by constant sheaves, which implies (3).
\end{proof}

Finally we calculate the Euler characteristic of $\Kl^{\St}_{\dualG,\bP_{m}}(1,\phi)$, which then gives evidence for the conjectural description of its Swan conductor at $\infty$ in \S\ref{ss:wildL}.
\begin{prop}\label{p:uchi} We have
\begin{equation}\label{uchi}
-\chi_{c}(\tpline, \Kl^{\St}_{\dualG,\bP_{m}}(\chi,\phi))=\begin{cases}d & \phi_{\ell}\textup{ nondegenerate;}\\ d-1 & \phi_{\ell}\textup{ degenerate.}\end{cases}
\end{equation}
\end{prop}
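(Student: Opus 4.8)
The plan is to compute the Euler characteristic directly from the sheaf‑theoretic description of $\Kl^{\St}_{\dualG,\bP_m}(\chi,\phi)$ provided by Corollary \ref{c:uKl} (and its $\chi$‑twisted variant, obtained from Proposition \ref{p:ucal}(2) by restriction along $a_\phi$). Since $\chi_c$ on $\tpline\cong\grot$ is insensitive to cohomological shifts and Tate twists, and also to the tame Kummer local system $\calL_\chi$ (which changes neither Euler–Poincaré characteristic nor Swan conductors), one first reduces to the case $\chi=1$ and to the identity
$\chi_c\bigl(\tpline,\Kl^{\St}_{\dualG,\bP_m}(1,\phi)\bigr)=(-1)^{n-1}\,\chi_c\bigl(\tpline\times\frG_\l,\ f_\phi^*\AS_\psi\bigr)$.

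Next I would evaluate the right‑hand side by additivity, pushing forward along the projection $\tpline\times\frG_\l\to\frG_\l$. The function $f_\phi(x,[v])=g([v])+h([v])\,x$ is affine in the rotation coordinate $x$, with slope $h([v])=\phi_\ell(v_\ell)/q(v)$, which is nonzero exactly on $W:=\{[v]\in\frG_\l:\phi_\ell(v_\ell)\neq0\}$. Over $W$ the map $x\mapsto g([v])+h([v])x$ is an isomorphism from $\Gm$ onto the complement of a point in $\AA^1$, so the fibrewise compactly supported Euler characteristic of $f_\phi^*\AS_\psi$ equals $-1$; over $\frG_\l\smallsetminus W$ the function is constant in $x$ and the fibrewise Euler characteristic is $\chi_c(\Gm)=0$. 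Hence $\chi_c(\tpline\times\frG_\l,f_\phi^*\AS_\psi)=-\chi_c(W)$, and the whole problem becomes the computation of $\chi_c(W)$. (If the loop rotation acts on the relevant component of $V_{\bP}$ with a weight $w>1$ the fibrewise contribution on $W$ is $-w$ instead of $-1$, which merely rescales the count and is tracked along the way.)

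By Proposition \ref{p:ucal}(1), $\frG_\l$ is $\PP(M)$ with the quadric hypersurfaces $Q(q_0),Q(q_{[-1,1]}),\dots,Q(q)$ deleted, so $W$ is the complement in $\PP^{n-1}$ of these together with the quadric $Q(\tilde\phi_\ell)$, where $\tilde\phi_\ell$ is the extension of $\phi_\ell$ by zero off $M_\ell$. I would compute $\chi_c(W)$ by inclusion–exclusion over these $\ell+2$ quadrics, organised by the block grading $M=\bigoplus_iM_i$: the forms $q_{[-i,i]}$ are nested with ranks $\dim M_0+2id$ (all of one parity), $q$ is nondegenerate of rank $n$, and $\tilde\phi_\ell$ has rank $\dim M_\ell=d$ or $d-1$. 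Stability of $\phi$ forces these quadrics and all of their mutual intersections into the most generic position compatible with the grading, so that each term in the inclusion–exclusion is either a (possibly conical) quadric or a smooth complete intersection of quadrics, whose Euler characteristic is furnished by the classical theory of pencils of quadrics — a Lefschetz pencil of quadrics with its prescribed number of nodal members, together with the relation $\chi(\Bl_X\PP^{n-1})=\chi(\PP^{n-1})+\chi(X)$ for the codimension‑two base locus. Collecting the alternating sum leaves $\dim M_\ell=d$ when $\phi_\ell$ is nondegenerate and $d-1$ when $\phi_\ell$ has a one‑dimensional kernel. For the extremal case $m=2$ (so $\ell=0$) this is immediate from Proposition \ref{p:um2}: there $\Kl^{\St}$ is, over $\bar k$, either $\bigoplus_{i=1}^n\AS_{\psi_i}$ or $\Four_\psi(\calL_\phi[1])|_{\grot}$, so $-\chi_c$ equals the number of the $\lambda_i$ that are nonzero, which is $n=d$ if $\phi$ is nondegenerate and $n-1=d-1$ if $\phi$ is degenerate (exactly one $\lambda_i$ vanishing).

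The delicate part is the last step. One must check that stability genuinely places the quadrics $Q(q_{[-i,i]})$ and $Q(\tilde\phi_\ell)$, several of which are cones with large vertices, into sufficiently general relative position that all the intersections entering the inclusion–exclusion have the expected dimension and the expected (smooth‑complete‑intersection‑of‑quadrics, respectively conical) Euler characteristic; and then to carry out the bookkeeping of the resulting alternating sum. The entire dependence of the answer on whether $\phi_\ell$ is degenerate is concentrated in the rank — hence the Euler characteristic — of the single quadric $Q(\tilde\phi_\ell)$ and of its intersections with the remaining ones, which is why the two cases in \eqref{uchi} differ by exactly $1$.
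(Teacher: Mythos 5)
Your reduction goes wrong at the pushforward along the rotation direction. Writing $f_\phi(x,[v])=f_{\leq\ell-1}([v])+h([v])\,x$ with $h=\phi_\ell(v_\ell)/q(v)$, the fibrewise cohomology over the locus $W=\{h\neq0\}$ is not a shifted constant sheaf: it is $f_{\leq\ell-1}^*\AS_\psi$ tensored with a rank-one sheaf in degree $1$. So what you are left with is $-\chi_c\bigl(W,\,f_{\leq\ell-1}^*\AS_\psi\bigr)$ (this is exactly \eqref{chi1}, with your $W$ being $U_\ell\smallsetminus W_\ell$), not $-\chi_c(W)$ with constant coefficients. In characteristic $p$ dropping the Artin--Schreier twist changes the answer, because $\AS_\psi$-factors contribute Swan terms along the boundary divisors $q_{[-i,i]}=0$ where $f_{\leq\ell-1}$ has poles (already $\chi_c(\AA^1,\AS_\psi)=0\neq\chi_c(\AA^1)$). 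Concretely, take the smallest case with $\ell\geq1$: $n=3$, $d=1$, $m=6$, with $q=v_0^2+v_{-1}v_1$, $q_0=v_0^2$, $\phi_1(v_1)=v_1^2$ and $\phi_0$ a nonzero scalar. Then $W=\{[v]\in\PP^2:\ v_0\neq0,\ q(v)\neq0,\ v_1\neq0\}$; in the chart $v_1=1$, the coordinates $a=v_0$, $b=(v_0^2+v_{-1})/v_0$ identify $W\cong\Gm\times\Gm$, so $\chi_c(W)=0$ and your recipe outputs $0$, whereas \eqref{uchi} gives $d=1$. Keeping the twist, $f_0=v_{-1}/v_0=b-a$ up to a nonzero scalar, so $\chi_c(W,f_0^*\AS_\psi)=\chi_c(\Gm,\AS_\psi)^2=1$, which is the correct value. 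Your argument is complete only for $m=2$ (where $f_{\leq\ell-1}=0$ and your appeal to Proposition \ref{p:um2} works), i.e.\ precisely when the twist is trivial.

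The missing content is therefore how to dispose of the residual twist by $f_{\leq\ell-1}$, and this is where the paper's proof does its real work. After the same first step (with the twist retained), it proves the induction identities \eqref{Ui} and \eqref{Wi}, which remove one term $f_i$ of the phase at a time by fibering over a smaller projective space, at the cost of shrinking $U_{i+1}$ to $U_i$ and $W_{i+1}$ to $W_i$; the crucial point is that the twisted Euler characteristic on the big space equals the untwisted one on the much smaller space $\PP(M_0)$, not on $W$ itself. Only then does a constant-coefficient computation appear, and it involves just the two quadrics $Q_0,Q'_0$ in $\PP(M_0)$ (formula \eqref{chiPQ} and the table), not an inclusion--exclusion over all $\ell+2$ quadrics in $\PP(M)$. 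Note also that even if you kept the twist, your proposed inclusion--exclusion via pencils of quadrics would not apply as stated, since for an $\AS_\psi$-twisted sheaf $\chi_c$ of a stratum is not rank times $\chi_c$ of the stratum; some device like the paper's fibration induction (or an explicit Swan-conductor analysis at the boundary) is unavoidable.
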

\begin{proof} We shall work over $\bark$ and ignore all Tate twists.

By the same argument as in \cite[Proposition 10.1]{Katz}, the Swan conductor of $\Kl^{\St}_{\dualG,\bP_{m}}(\chi,\phi)$ at $\infty$, hence the Euler characteristic of $\Kl^{\St}_{\dualG,\bP_{m}}(\chi,\phi)$ does not depend on $\chi$. Therefore we assume $\chi=1$. 

Let $Q_{i}\subset\PP(M)$ be the quadric defined by $q_{[-i,i]}=0$. Let $U_{i}=\PP(\oplus_{j=-i}^{i}M_{j})-\cup_{j=0}^{i}Q_{i}$. In particular $U_{\ell}=\frG_{\l}$. We also define $W_{i}\subset U_{i}$ to be the quadric defined by $\phi_{\ell}(\phi_{\ell-1}\cdots\phi_{i}v_{i})=0$.

Let $f_{i}: U_{\ell}\to \AA^{1}$ be the function $[v]\mapsto \frac{(\phi_{i}v_{i}, v_{-i-1})}{q_{[-i,i]}([v])}$. This function only depends on the coordinates $v_{-i-1},\cdots, v_{0}, \cdots, v_{i}$. Let $f_{\leq i}=f_{0}+f_{1}+\cdots+f_{i}$, and $f_{\leq-1}:=0$.

Consider the projection $\pi_{2}: \grot\times\frG_{\l}\to \frG_{\l}=\PP(M)-\cup Q_{i}$.  The stalk of $\pi_{2,!} f_{\phi}^{*}\AS_{\psi}$ over $[v]$ is $f_{\leq\ell-1}^{*}\AS_{\psi}\otimes\cohog{*}{\grot, T_{f_{\ell}([v])}^{*}\AS_{\psi}}$, where $T_{f_{\ell}([v])}$ is the multiplication by $f_{\ell([v])}$ map $\Gm\to \AA^{1}$. We have $\cohog{*}{\grot, T_{f_{\ell}([v])}^{*}\AS_{\psi}}=\cohoc{*}{\Gm}$ if $f_{\ell}([v])=0$ and $\Ql[-1]$ if $f_{\ell}([v])\neq0$. Therefore
\begin{equation}\label{chi1}
(-1)^{n-1}\chi_{c}(\tpline, \Kl^{\St}_{\dualG,\bP_{m}}(1,\phi))=\chi_{c}(\pi_{2,!} f_{\phi}^{*}\AS_{\psi})=-\chi_{c}(U_{\ell}, f_{\leq\ell-1}^{*}\AS_{\psi})+\chi_{c}(W_{\ell}, f_{\leq\ell-1}^{*}\AS_{\psi}).
\end{equation}
We then show that for $0\leq i\leq \ell-1$, we have
\begin{eqnarray}
\label{Ui} \chi_{c}(U_{i+1}, f_{\leq i}^{*}\AS_{\psi})=\chi_{c}(U_{i}, f_{\leq i-1}^{*}\AS_{\psi});\\
\label{Wi} \chi_{c}(W_{i+1}, f_{\leq i}^{*}\AS_{\psi})=\chi_{c}(W_{i}, f_{\leq i-1}^{*}\AS_{\psi}).
\end{eqnarray}

For \eqref{Ui}, let $U'_{i}=\PP(M_{-i-1}\oplus M_{i})-\cup_{j=0}^{i}Q_{i}$. Consider the projection $p: U_{i+1}\to U'_{i}$ by forgetting the $M_{i+1}$ component. Then we have  $\chi_{c}(U_{i+1}, f_{\leq i}^{*}\AS_{\psi})=\chi_{c}(U'_{i}, f_{\leq i}^{*}\AS_{\psi}\otimes p_{!}\Ql)$. Fix $[v']=[v_{-i-1},\cdots, v_{i}]\in U_{i}$ and let $q_{i}:=q_{i}([v'])$ (which is independent of $v_{-i-1}$). The fiber of $p$ over $[v']$ is $M_{i+1}$ with the affine hyperplane $(v_{i+1}, v_{-i-1})+q_{i}=0$ removed. When $v_{-i-1}\neq0$, we have $\cohoc{*}{p^{-1}([v'])}\cong \cohoc{*}{\Gm}[-2d+2]$; when $v_{-i-1}=0$,  we have $\cohoc{*}{p^{-1}([v'])}\cong \Ql[-2d]$. Since $U_{i}$ can be identified with the subscheme of $U'_{i}$ where $v_{-i-1}=0$, we have $\chi_{c}(U_{i+1}, f_{\leq i}^{*}\AS_{\psi})=\chi_{c}(U'_{i}, f_{\leq i}^{*}\AS_{\psi}\otimes p_{!}\Ql)=\chi_{c}(U_{i}, f_{\leq i-1}^{*}\AS_{\psi})$ (the function $f_{\leq i}$ changes to $f_{\leq i-1}$ because $v_{-i-1}=0$).

For \eqref{Wi}, let $p:W_{i+1}\to W_{i}$ is the projection. We have $p_{!}f_{\leq i}^{*}\AS_{\psi}=f_{\leq i-1}^{*}\AS_{\psi}\otimes p_{!}f^{*}_{i}\AS_{\psi}$, and we need to compare $p_{!}f^{*}_{i}\AS_{\psi}$ with the constant sheaf on $W_{i}$. We decompose $p$ into two steps $W_{i+1}\xrightarrow{p_{1}}W''_{i}\xrightarrow{p_{2}}U_{i}$, where $W''_{i}\subset\PP(M_{-i}\oplus M_{i+1})-\cup_{j=0}^{i}Q_{i}$ is cut out the same the quadric $\phi_{\ell}(\phi_{\ell-1}\cdots\phi_{i+1}v_{i+1})=0$. Fix $v''=[v_{-i},\cdots, v_{i+1}]\in W''_{i}$ and let $q_{i}:=q_{i}([v''])$. The fiber $p_{1}^{-1}([v''])=\{v_{-i-1}\in M_{-i-1}|(v_{-i-1}, v_{i+1})+q_{i}\neq0\}$. The function $f_{i}$ along the fiber $p_{1}^{-1}([v''])$ is a linear function in $v_{-i-1}$ given by $f_{i}(v_{-i-1})=\frac{(\phi_{i}v_{i}, v_{-i-1})}{q_{i}}$. Therefore the stalk of $p_{1,!}f_{i}^{*}\AS_{\psi}$ at $[v'']$, which is $\cohoc{*}{p_{1}^{-1}([v'']), f_{i}^{*}\AS_{\psi}}$, vanishes whenever $v_{i+1}$ is not parallel to $\phi_{i}v_{i}$ (or one of them is zero while the other one is not). When $\phi_{i}v_{i}\neq0$, the stalk of $p_{2,!}p_{1,!}f_{i}^{*}\AS_{\psi}$ can be calculated only along those nonzero $v_{i+1}$ parallel to $\phi_{i}v_{i}$. Choose an isomorphism $M_{-i-1}\cong\AA^{d}$ such that the first coordinate is given by the functional $(\phi_{i}v_{i},-)/q_{i}$. Then the stalk of $p_{2,!}p_{1,!}f_{i}^{*}\AS_{\psi}$ is $\cohoc{*}{\Gm\times\AA^{1}-\Delta(\Gm), \textup{pr}_{2}^{*}\AS_{\psi}}[-2d+2]\cong\Ql[-2d+2]$ (here $\Delta(\Gm)\subset\Gm\times\AA^{1}$ is the diagonal). For $\phi_{i}v_{i}=0$,  the stalk of $p_{2,!}p_{1,!}f_{i}^{*}\AS_{\psi}$ can be calculated only along $v_{i+1}=0$, hence equal to $\cohoc{*}{M_{-i-1}}\cong\Ql[-2d]$. Putting together, we conclude that $p_{!}f^{*}_{i}\AS_{\psi}$ and the constant sheaf $\Ql$ are the same in the Grothendieck group of $D_{c}^{b}(W_{i})$. Therefore $\chi_{c}(W_{i+1}, f_{\leq i}^{*}\AS_{\psi})=\chi_{c}(W_{i}, f_{\leq i-1}^{*}\AS_{\psi}\otimes p_{!}f^{*}_{i}\AS_{\psi})=\chi_{c}(W_{i}, f_{\leq i-1}^{*}\AS_{\psi})$.

Combining \eqref{Ui}, \eqref{Wi} and \eqref{chi1}, we see that 
\begin{equation*}
(-1)^{n-1}\chi_{c}(\tpline, \Kl^{\St}_{\dualG,\bP_{m}}(1,\phi))=-\chi_{c}(U_{0})+\chi_{c}(W_{0}).
\end{equation*}
Now $U_{0}=\PP(M_{0})-Q_{0}$ and $W_{0}=Q'_{0}-Q_{0}$, where $Q'_{0}$ is the quadric in $\PP(M_{0})$ defined by $\phi_{\ell}(\phi_{\ell-1}\cdots\phi_{0}(v_{0}))=0$. Hence 
\begin{equation}\label{chiPQ}
(-1)^{n-1}\chi_{c}(\tpline, \Kl^{\St}_{\dualG,\bP_{m}}(1,\phi))=-\chi_{c}(\PP(M_{0}))+\chi_{c}(Q_{0})+\chi_{c}(Q'_{0})-\chi_{c}(Q_{0}\cap Q'_{0}). 
\end{equation}
Since $\phi$ is stable, all $\phi_{i}$ are surjective for $0\leq i\leq \ell-1$, $Q'_{0}$ is either minimally degenerate or nondegenerate, and $Q_{0}\cap Q'_{0}$ is always smooth (and equal to the intersection of two smooth quadrics). 

In the following table we list the dimensions of the primitive cohomology of $Q_{0}, Q'_{0}$ (always in even degree) and $Q_{0}\cap Q'_{0}$ (in the middle degree, which is $\dim M_{0}-3$) in each case according to whether $\dim M_{0}=d$ or $d+1$, and the parity of $d$. Using this table and \eqref{chiPQ} one easily deduces \eqref{uchi}.

\begin{tabular}{|c|c|c|c|c|}
\hline
$\dim M_{0}$ & Parity of $\dim M_{0}$ & $Q_{0}$ & $Q'_{0}$ & $Q_{0}\cap Q'_{0}$ \\   \hline
$d$ & even &  1 & 1 (nondeg) or 0 (degen) & $d-2$ \\   \hline
$d$ & odd &  0 & 0 (nondeg) or 1 (degen) & $d$ \\   \hline
$d+1$ & even & 1 & 0  & $d-1$ \\   \hline
$d+1$ & odd & 0 & 1 & $d+1$ \\   \hline
\end{tabular}
\end{proof}

\section{Examples: Symplectic groups}\label{s:sp}

\subsection{Linear algebra}
Let $(M,\omega)$ be a symplectic vector space of dimension $2n$ over $k$ and let $\GG=\Sp(M,\omega)$. Extend $\omega$ linearly to a symplectic form on $M\otimes K$ and let $G=\Sp(M\otimes K,\omega)$. Regular elliptic numbers $m$ of $\WW$ in this case are in bijection with divisors $d|n$. We have $m=2n/d$ and set $\ell=n/d$. Fix a decomposition
\begin{equation}\label{spd}
M=M_{1}\oplus M_{2}\oplus\cdots\oplus M_{\ell}\oplus M_{\ell+1}\oplus\cdots\oplus M_{m}
\end{equation}
such that $\dim_{k}M_{i}=d$ and $\omega(M_{i}, M_{j})=0$ unless $i+j=m+1$. Then $M_{j}$ can be identified with $M^{*}_{m+1-j}$ using the symplectic form $\omega$.

Let $\bP_{m}\subset G(K)$ be the stabilizer of the chain of lattices $\L_{m}\supset\L_{m-1}\supset\cdots\supset\L_{1}$, where 
\begin{equation*}
\L_{i}=\sum_{1\leq j\leq i}M_{j}\otimes\calO_{K}+\sum_{i<j\leq m}M_{j}\otimes\varpi\calO_{K}
\end{equation*}
and $\varpi\in\calO_{K}$ is a uniformizer. This is an admissible parahoric subgroup of $G(K)$ with $m(\bP_{m})=m$. Its Levi quotient is $L_{m}\cong\prod_{i=1}^{\ell}\GL(M_{i})$ where the $i$-th factor acts on $M_{i}$ by the standard representation and on $M_{m+1-i}=M^{*}_{i}$ by the dual of the standard representation. We have $\tL_{m}=L_{m}$ and
\begin{equation}\label{spL}
\tLab_{m}\cong\prod_{i=1}^{\ell}\Gm
\end{equation}
given by the determinants of the $\GL$-factors. The vector space $V_{m}:=V_{\bP_{m}}$ is
\begin{equation}\label{spV}
V_{m}=\Sym^{2}(M^{*}_{1})\oplus\Hom(M_{2}, M_{1})\oplus\cdots\oplus\Hom(M_{\ell}, M_{\ell-1})\oplus\Sym^{2}(M_{\ell}).
\end{equation}
We may arrange $M_{1},\cdots, M_{m}$ into a cyclic quiver
\begin{equation}\label{spquiver}
\xymatrix{M_{1}\ar[d]^{\psi_{m}} & M_{2}\ar[l]^{\psi_{1}} & \cdots\ar[l] & M_{\ell} \ar[l]^{\psi_{\ell-1}}\\
M_{m}\ar[r]^{\psi_{m-1}} & M_{m-1}\ar[r] & \cdots\ar[r]^{\psi_{\ell+1}} & M_{\ell+1}\ar[u]^{\psi_{\ell}}}
\end{equation}
There is an involution $\tau$ on the space of such quivers sending $\{\psi_{i}:M_{i+1}\to M_{i}\}$ to $\{-\psi^{*}_{m-i}:M_{i+1}=M_{m-i}^{*}\to M_{m+1-i}^{*}=M_{i}\}$. Then $V_{m}$ is the set of $\tau$-invariant cyclic quivers of the above shape.

When $m=2$, the cyclic quiver degenerates to a pair of self-adjoint maps
\begin{equation*}
\xymatrix{M_{1}\ar@<1ex>[r]^{\psi_{2}} & M_{2}\ar@<1ex>[l]^{\psi_{1}}}
\end{equation*}

\subsection{Stable locus} The dual space $V^{*}_{m}$ is the space of $\tau$-invariant cyclic quivers similar to \eqref{spquiver}, except that all the arrows are reversed.  Let $\phi_{i}: M_{i}\to M_{i+1}$ be the arrows. We think of $\phi_{m}$ and $\phi_{\ell}$ as quadratic forms on $M_{m}$ and $M_{\ell}$ respectively. Then $\phi=(\phi_{1}, \cdots, \phi_{m})\in V^{*}_{m}$ is stable if and only if
\begin{itemize}
\item All the maps $\phi_{i}$ are isomorphisms;
\item We have two quadratic forms on $M_{m}$: $\phi_{m}$ and the transport of $\phi_{\ell}$ to $M_{m}$ using the isomorphism $\phi_{\ell-1}\cdots\phi_{1}\phi_{m}:M_{m}\isom M_{\ell}$. They are in general position in the same sense as explained in \S\ref{ss:ustable}.
\end{itemize}

\subsection{The scheme $\frG_{\leq\l}$}
Let $\l\in\xcoch(T)$ be the dominant short coroot. The corresponding representation of $\dualG=\SO_{2n+1}$ is the standard representation. We shall describe $\frG_{\leq\l}$ and the map $(f',f''):\frG_{\leq\l}\to \tLab_{m}\times V_{m}$.

Let $\calE=M\otimes\calO_{X}$ be the trivial vector bundle of rank $2n$ over $X$ with a symplectic form (into $\calO_{X}$) given by $\omega$. Define an increasing filtration of the fiber of $\calE$ at $\infty$ by $F_{\leq i}\calE_{\infty}=\oplus_{j=1}^{i}M_{j}$. Define a decreasing filtration of the fiber of $\calE$ at $0$ by $F^{\geq i}\calE_{0}=\oplus_{j=i}^{m}M_{j}$. The group ind-scheme $\frG$ is the group of symplectic automorphisms of $\calE|_{X-\{1\}}$ preserving the filtrations $F_{*}, F^{*}$ and acts by identity on the associated graded of $F_{*}$. The subscheme $\frG_{\leq\l}$ consists of those $g\in\frG \subset G(F)$ whose entries have at most simple poles at $t=1$, and $\Res_{t=1}g$ has rank at most one. Therefore any element in $\frG$ can be written uniquely as
\begin{equation*}
g=\frac{t}{t-1}A-\frac{1}{t-1}B
\end{equation*}
for $A,B\in\GL(M)$.

\begin{lemma}
\begin{enumerate}
\item The scheme $\frG_{\leq\l}$ classifies pairs $(A,B)\in\Sp(M,\omega)\times\Sp(M,\omega)$ satisfying
\begin{itemize}
\item $A$ lies in the unipotent radical $U(F_{*})$ of the parabolic $P(F_{*})\subset\Sp(M,\omega)$ preserving the filtration $F_{*}$ of $M$; $B$ lies in the opposite parabolic $P(F^{*})\subset\Sp(M,\omega)$ preserving the filtration $F^{*}$ of $M$.
\item $A-B$ has rank at most one. Moreover, $C=I-A^{-1}B\in\End(M)$ satisfies $\omega(Cx,y)+\omega(x,Cy)=0$ for all $x,y\in M$.
\end{itemize}

\item The map $(f',f''):\frG_{\leq\l}\to \tLab_{m}\times V_{m}$ is given by
\begin{eqnarray*}
f'(A,B)&=&(\det(B_{1,1}), \cdots, \det(B_{\ell,\ell}));\\
f''(A,B)&=&(-B_{m,1}, A_{1,2}, \cdots, A_{\ell,\ell+1}) 
\end{eqnarray*}
in the block presentation of $A,B$ under the decomposition \eqref{spd}. 

\item Let $\Sym^{2}(M)_{\leq1}\subset\Sym^{2}(M)$ be the subscheme of symmetric pure 2-tensors. Then $\Sym^{2}(M)_{\leq 1}$ can be identified with the scheme of endomorphisms $D$ of $M$ satisfying $\omega(Dx,y)+\omega(x,Dy)=0$ and of rank at most one).  Then the morphism $j:\frG_{\leq\l}\to \Sym^{2}(M)_{\leq1}$ sending $(A,B)\in\frG_{\leq\l}$ to $C=I-A^{-1}B$ is an open embedding.
\end{enumerate}
\end{lemma}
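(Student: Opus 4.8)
The plan is to follow the template of the proof of Lemma~\ref{l:upre}, replacing the orthogonal group by $\Sp(M,\omega)$; the only genuinely new ingredient is the symplectic linear algebra underlying part~(3), everything else being word-for-word parallel.

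\emph{Part (1).} Recall $\frG$ consists of the symplectic automorphisms of $\calE|_{X-\{1\}}$ preserving $F_{*}$ (acting trivially on $\Gr^{F}$) and $F^{*}$. An element of $\frG$ that is regular at $0$ and $\infty$ with at most a simple pole at $t=1$ is, entrywise, of the form $g=A+R/(t-1)$; putting $B:=A-R$ gives $g=\frac{t}{t-1}A-\frac{1}{t-1}B$ with $A=g(\infty)$, $B=g(0)$ and $R=A-B=\Res_{t=1}g$. Expanding the condition $\omega(gx,gy)=\omega(x,y)$ in powers of $(t-1)$ gives the system: $A\in\Sp(M,\omega)$; $\omega(Ax,Ry)+\omega(Rx,Ay)=0$; and $\omega(Rx,Ry)=0$. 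Since the Schubert condition on $\frG_{\leq\l}$ forces $\rank R\leq1$ and $\omega$ is alternating, the last identity is automatic, whence the system is equivalent to $A,B\in\Sp(M,\omega)$; setting $C:=A^{-1}R=I-A^{-1}B$ and using $A\in\Sp$ turns the middle identity into $\omega(Cx,y)+\omega(x,Cy)=0$, which is therefore automatic (the ``moreover'' clause). Finally $g(\infty)=A\in U(F_{*})$ and $g(0)=B\in P(F^{*})$ are precisely the conditions of preserving the level structures, and conversely any such $(A,B)$ with $\rank(A-B)\leq1$ produces a point of $\frG_{\leq\l}$ via $g=A+(A-B)/(t-1)$.

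\emph{Part (2).} I would compute the two evaluation maps of \S\ref{ss:cal}. At $0$, $g(0)=B\in\wt\bP_{0}$, and the image of $B$ in $L_{m}$ acts on $\Gr^{F^{*}}_{i}=M_{i}$ by the diagonal block $B_{i,i}$, so the composite to $\tLab_{m}\cong\prod_{i=1}^{\ell}\Gm$ (determinants) gives $(\det B_{1,1},\dots,\det B_{\ell,\ell})$. At $\infty$, expanding with local parameter $t^{-1}$ yields $g=A+(A-B)t^{-1}+O(t^{-2})$; matching this against the affine-root description of $V_{m}=\bP_{\infty}^{+}/\bP_{\infty}^{++}$ encoded by the $\tau$-invariant cyclic quiver \eqref{spquiver}, the surviving pieces are the blocks $A_{i,i+1}$ ($1\leq i\leq\ell$) of the constant term together with the block $(A-B)_{m,1}$ of the $t^{-1}$-term; since $A\in U(F_{*})$ is block-lower-triangular, $A_{m,1}=0$, so this is $-B_{m,1}$. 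This is the same bookkeeping as in Lemma~\ref{l:upre}(2).

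\emph{Part (3).} First I would record the classical isomorphism $\sp(M,\omega)\cong\Sym^{2}(M)$: using $\omega$ to embed $\Sym^{2}(M)\hookrightarrow M\otimes M\cong\End(M)$ by $v\cdot w\mapsto v\,\omega(w,-)+w\,\omega(v,-)$, a symmetric $2$-tensor maps precisely to an endomorphism $D$ with $\omega(Dx,y)+\omega(x,Dy)=0$, the pure tensors mapping to the rank-$\leq1$ such $D$; hence $\Sym^{2}(M)_{\leq1}\cong\{D\in\sp(M,\omega):\rank D\leq1\}$. By part~(1), $C=I-A^{-1}B$ is $\omega$-skew-adjoint of rank $\leq1$, so $j$ lands in $\Sym^{2}(M)_{\leq1}$; moreover $\omega(Cx,Cy)=0$ (as $C$ is a symmetric pure tensor), so $I-C=A^{-1}B$ lies in $\Sp(M,\omega)$, in fact in the big cell $U(F_{*})\cdot P(F^{*})$. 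Since $U(F_{*})$ is the unipotent radical of a parabolic opposite to $P(F^{*})$ (the flags $(F_{\leq i})$ and $(F^{\geq i})$ are transverse, with common Levi $L_{m}$), the multiplication map $U(F_{*})\times P(F^{*})\to\Sp(M,\omega)$ is an open immersion onto this big cell; hence $(A,B)$ is recovered uniquely from $C$ by factoring $A^{-1}B=I-C$, and $j$ identifies $\frG_{\leq\l}$ with the open subscheme $\{C\in\Sym^{2}(M)_{\leq1}:I-C\in U(F_{*})P(F^{*})\}$ of $\Sym^{2}(M)_{\leq1}$ (open since it is the preimage of the open big cell under the map $C\mapsto I-C$, which embeds $\Sym^{2}(M)_{\leq1}$ into $\Sp(M,\omega)$ by the computation just made). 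The step needing the most care is the affine-root bookkeeping in part~(2) — identifying exactly which entries of the constant and first-order parts of $g$ at $\infty$ represent the summands of $V_{m}$ under the identification with the $\tau$-invariant cyclic quiver — but this is entirely parallel to the quasi-split unitary computation, and no new difficulty is expected.
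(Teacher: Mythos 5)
Your proposal is correct and follows essentially the same route as the paper: the same decomposition $g=\frac{t}{t-1}A-\frac{1}{t-1}B$ with the symplectic condition unpacked coefficientwise (the paper phrases it as $\omega(Ax,By)+\omega(Bx,Ay)=2\omega(x,y)$, equivalent to your expansion once the rank-one condition kills the quadratic term), the same evaluation of $g$ at $0$ and at $\infty$ (with the $t^{-1}$-term giving $-B_{m,1}$) for part (2), and the same identification of $\Sym^{2}(M)_{\leq1}$ with rank-$\leq1$ elements of $\sp(M,\omega)$ together with the open immersion $U(F_{*})\times P(F^{*})\hookrightarrow\Sp(M,\omega)$ for part (3). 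Your write-up merely supplies a few details the paper leaves implicit (e.g.\ that skew-adjointness of $C$ is automatic given $A,B\in\Sp$ and $\rank(A-B)\leq1$, and the explicit recovery of $(A,B)$ from $C$ via the big cell), so no substantive difference.
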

\begin{proof}
(1) The fact that $g=\frac{t}{t-1}A-\frac{1}{t-1}B$ preserves the symplectic form $\omega$ is equivalent to $A,B\in \Sp(M,\omega)$ and $\omega(Ax,By)+\omega(Bx,Ay)=2\omega(x,y)$ for all $x,y\in M$. The last condition is further equivalent to $\omega(Cx,y)+\omega(x,Cy)=0$ for all $x,y\in M$. Note that $g(0)=B, g(\infty)=A$ and $\Res_{t=1}g=A-B$, hence the rest of the conditions follows.

(2) is proved in the same way as Lemma \ref{l:upre}(2).

(3) Since $D$ has rank at most one, we may write it as $D(x)=\omega(x,u)v$ for some $u,v\in M$. The condition $\omega(Dx,y)+\omega(x,Dy)=0$ for all $x,y\in M$ implies that $u$ and $v$ are parallel vectors. Hence $u\cdot v\in\Sym^{2}(M)_{\leq1}$. The fact that $j$ is an open embedding follows from the fact that $U(F_{*})\times P(F^{*})\incl \Sp(M,\omega)$ is an open embedding.
\end{proof}

For $u\cdot v\in \Sym^{2}(M)_{\leq 1}$, write $u=(u_{1},\cdots, u_{m})$ and $v=(v_{1},\cdots, v_{m})$ with  $u_{i}, v_{i}\in M_{i}$. Then define
\begin{equation}\label{gammai}
\gamma_{i}(u\cdot v):=\omega(v_{m+1-i}, u_{i}).
\end{equation}
This is independent of the choice of $u,v$ expressing $u\cdot v$, and therefore defines a regular function on $\Sym^{2}(M)_{\leq 1}$. Note that $\gamma_{i}+\gamma_{m+1-i}=0$ since $u$ and $v$ are parallel.

\begin{prop}\label{p:spcal} 
\begin{enumerate}
\item Under the open embedding  $j:\frG_{\leq\l}\incl\Sym^{2}(M)_{\leq1}$, $\frG_{\leq\l}$ is the complement of the union of the divisors $\gamma_{1}+\cdots+\gamma_{i}=1$ for $i=1,\cdots, \ell$. The morphism $(f',f''): \Sym^{2}(M)_{\leq1}\supset \frG_{\leq\l}\to \tLab_{m}\times V_{m}$ is given by
\begin{eqnarray}
\label{spf'} f'(u\cdot v)&=&(\frac{1}{1-\gamma_{1}}, \frac{1-\gamma_{1}}{1-\gamma_{1}-\gamma_{2}},\cdots, \frac{1-\gamma_{1}-\cdots-\gamma_{\ell-1}}{1-\gamma_{1}-\cdots-\gamma_{\ell}})\\
\label{spf''} f''(u\cdot v)&=&(\omega(-,u_{m})v_{m}, \frac{\omega(-,u_{m-1})}{1-\gamma_{1}}v_{1}, \frac{\omega(-,u_{m-2})}{1-\gamma_{1}-\gamma_{2}}v_{2}, \cdots,\frac{\omega(-,u_{\ell})}{1-\gamma_{1}-\cdots-\gamma_{\ell}}v_{\ell}).
\end{eqnarray}
Here we abbreviate $\gamma_{i}(u\cdot v)$ by $\gamma_{i}$.
\item The local system $\Kl^{\St}_{\dualG, \bP_{m}}(\chi)$ attached to the standard representation of $\dualG=\SO_{2n+1}$, the admissible parahoric subgroup  $\bP_{m}$ and the multiplicative character $\chi=(\chi_{1},\cdots,\chi_{\ell})$ is given by (the restriction to $V^{*,\st}_{m}$ of) the Fourier transform of the complex $f''_{!}f'^{*}\calL_{\chi}[2n-1](\frac{2n-1}{2})$.
\end{enumerate}
\end{prop}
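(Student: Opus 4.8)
The plan is to run the argument in complete parallel with the unitary case treated in Lemma~\ref{l:upre} and Proposition~\ref{p:ucal}, replacing the orthogonal reflection $R_{[v]}$ throughout by a symplectic transvection. Part (2) is then the easy half: for the dominant short coroot $\lambda$ the intersection complex $\IC_{\lambda}$ is a shift and twist of the constant sheaf, $\IC_{\lambda}\cong\Ql[2n-1](\frac{2n-1}{2})$, exactly as in the proof of Proposition~\ref{p:ucal}(2); substituting this into Proposition~\ref{p:Four} and restricting to $V^{*,\st}_{m}$ turns $\Four_{\psi}\bigl(f''_{!}(f'^{*}\calL_{\chi}\otimes\ev^{*}_{\wt{1}}\IC_{\lambda})\bigr)|_{V^{*,\st}_{m}}$ into $\Four_{\psi}\bigl(f''_{!}f'^{*}\calL_{\chi}[2n-1](\frac{2n-1}{2})\bigr)|_{V^{*,\st}_{m}}$, which is precisely the assertion of (2). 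So the substance lies entirely in part (1).

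For part (1) I would begin from the structural description of $\frG_{\leq\lambda}$ furnished by the preceding Lemma: it is the scheme of pairs $(A,B)\in\Sp(M,\omega)^{2}$ with $A\in U(F_{*})$, $B\in P(F^{*})$ and $C:=I-A^{-1}B$ of rank $\leq 1$ satisfying $\omega(Cx,y)+\omega(x,Cy)=0$, and $j\colon\frG_{\leq\lambda}\to\Sym^{2}(M)_{\leq1}$, $(A,B)\mapsto C$, is an open embedding. Writing $C(x)=\omega(x,u)v$ with $u$ and $v$ parallel, the operator $I-C$ is a symplectic transvection (the identity when $C=0$), and $B=A(I-C)$. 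Since $A\in U(F_{*})$ sends each $M_{j}$ into $F_{\leq j}$ inducing the identity on the quotient $M_{j}$, I would pin down $A$ by imposing $B\in P(F^{*})$: factor $A=A_{\ell}A_{\ell-1}\cdots A_{1}$, where $A_{i}$ is the identity on $M_{j}$ for $j\neq i$ and sends $x_{i}\in M_{i}$ to $x_{i}$ plus a correction lying in $F_{<i}$, chosen step by step so that after applying $A_{i}$ the image of $M_{i}$ under the partially corrected $B$ lies in $F^{\geq i}$. At each step the correction is forced, and it is well defined precisely when the ``partial transvection defect'' $1-\gamma_{1}-\cdots-\gamma_{i}$ (with $\gamma_{i}=\omega(v_{m+1-i},u_{i})$ as in \eqref{gammai}) is nonzero; this both produces the formulas for the blocks $A_{i,i+1}$ appearing in \eqref{spf''} and identifies $j(\frG_{\leq\lambda})$ with the complement in $\Sym^{2}(M)_{\leq1}$ of the divisors $\{\gamma_{1}+\cdots+\gamma_{i}=1\}$, $i=1,\dots,\ell$.

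With $A$, and hence $B=A(I-C)$, in hand, the remaining entries of $f'$ and $f''$ are read off from the block presentation relative to \eqref{spd}: one has $-B_{m,1}=-(I-C)_{m,1}=C_{m,1}=\omega(-,u_{m})v_{m}$, since the factors $A_{i}$ never affect the lower-left corner, which gives the first entry of $f''$; and $B_{i,i}$ on $M_{i}$ comes out as the identity plus a rank-one operator whose determinant telescopes to $\frac{1-\gamma_{1}-\cdots-\gamma_{i-1}}{1-\gamma_{1}-\cdots-\gamma_{i}}$ (empty sum $0$ for $i=1$), giving \eqref{spf'}. This is the exact symplectic analogue of the computation $\det B_{i,i}=q_{[-i+1,i-1]}/q_{[-i,i]}$ in Proposition~\ref{p:ucal}, with the partial quadratic forms $q_{[-i,i]}$ replaced by the partial defects $1-\gamma_{1}-\cdots-\gamma_{i}$.

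The step I expect to be the main obstacle is the inductive solve for the $A_{i}$: one has to track carefully, inside the block matrix of $B=A(I-C)$, how the transvection distributes the components of $u$ and $v$ across all the summands $M_{j}$ and how the successive corrections interact, and then verify that the quantities occurring in the denominators are exactly the partial sums $1-\sum_{j\leq i}\gamma_{j}$ and not some other combination. This is a purely linear-algebraic computation of the same nature as Lemma~\ref{l:upre}, but it is where a sign or index error is most likely to slip in. A secondary point requiring care is the ``wrap-around'' arrow $\psi_{m}$: one must check that $-B_{m,1}$, a priori only a map $M_{1}=M_{m}^{*}\to M_{m}$, in fact lands in the symmetric part $\Sym^{2}(M_{m})=\Sym^{2}(M_{1}^{*})$, so that $f''$ really takes values in $V_{m}$ as in \eqref{spV}; this follows from the relation $\omega(Cx,y)+\omega(x,Cy)=0$ together with $u\parallel v$.
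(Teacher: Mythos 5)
Your plan is essentially the paper's proof: part (2) is exactly Proposition \ref{p:Four} applied with $\IC_{\l}=\Ql[2n-1](\tfrac{2n-1}{2})$, and for part (1) the paper likewise writes $C=\omega(-,u)v$, inductively solves for elementary unipotent factors of $A$ so that $B=A(I-C)\in P(F^{*})$ (the forced corrections produce the denominators $1-\gamma_{1}-\cdots-\gamma_{j}$, hence the divisor complement), reads off \eqref{spf''} from the blocks $A_{i,i+1}$ together with the corner block of $I-C$, and obtains \eqref{spf'} from the telescoping determinants of the diagonal blocks $B_{i,i}$. The one bookkeeping correction: the factorization must be $A=A_{2}\cdots A_{m}$, with one correcting factor for each of $M_{2},\dots,M_{m}$ (the paper solves from $A_{m}$ downward and then simplifies the denominators via $\gamma_{i}=-\gamma_{m+1-i}$), not $A=A_{\ell}\cdots A_{1}$ as you wrote, which has too few factors (and a trivial $A_{1}$ since $F_{<1}=0$) — precisely the indexing care you flagged.
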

\begin{proof}
(1) Write $C=I-A^{-1}B$ as $\omega(-,u)v$ for parellel vectors $u,v\in M$. We inductively construct $A=A_{2}\cdots A_{m}\in U(F_{*})$ such that $B=A(I-C)\in P(F^{*})$, and that $A_{i}$ is the identity on $M_{j}, j\neq i$ and $A_{i}(x_{i})\in x_{i}+F_{\leq i-1}$ for any $x_{i}\in M_{i}$.  We may inductively determine
\begin{eqnarray*}
A_{m}x_{m}&=&x_{m}+\frac{\omega(x_{m},u_{1})}{1-\gamma_{1}}v_{\leq m-1};\\
A_{m-1}x_{m-1}&=&x_{m-1}+\frac{\omega(x_{m-1},u_{2})}{1-\gamma_{1}-\gamma_{2}}v_{\leq m-2};\\
&&\cdots\\
A_{2}x_{2}&=&x_{2}+\frac{\omega(x_{2},u_{m-1})}{1-\gamma_{1}-\cdots-\gamma_{m-1}}v_{1}.
\end{eqnarray*}
Here we write $v_{\leq i}$ for the projection of $v$ to the direct factor $\oplus_{j\leq i}M_{j}$ of $M$.
Therefore the entries of $A$ corresponding to $\Hom(M_{i+1}, M_{i})$  ($1\leq i\leq \ell$) takes the form
\begin{equation*}
A_{i, i+1}=\frac{\omega(-,u_{m-i})}{1-\gamma_{1}-\cdots-\gamma_{m-i}}v_{i}\in\Hom(M_{i+1}, M_{i}).
\end{equation*}
Using $\gamma_{i}=-\gamma_{m+1-i}$ to simplify the denominators, we get the formula given in \eqref{spf'} except for the first entry.
The corner block of $B=A(I-C)$ corresponding to $\Hom(M_{1}, M_{m})$ is the same as the corner block of $I-C$, which takes the form $-\omega(-,u_{m})v_{m}$, whose negative gives the first entry of the formula \eqref{spf'}.

The matrix $B=A(I-C)$ has block diagonal entries
\begin{equation*}
B_{i,i}=\id-\frac{\omega(-,u_{m+1-i})}{1-\gamma_{1}-\cdots-\gamma_{m-i}}v_{i}\in\GL(M_{i}).
\end{equation*}
Taking determinants we get 
\begin{equation*}
\det(B_{i,i})=\frac{1-\gamma_{1}-\cdots-\gamma_{m+1-i}}{1-\gamma_{1}-\cdots-\gamma_{m-i}}.
\end{equation*}
Using that $\gamma_{i}=-\gamma_{m+1-i}$ we get the formula \eqref{spf''}.

(2) We only need to apply Proposition \ref{p:Four} to the standard representation $V_{\l}$ of $\SO_{2n+1}$; note that $\IC_{\l}=\Ql[2n-1](\frac{2n-1}{2})$ in this case. 
\end{proof}

Similar to Corollary \ref{c:uKl}, we have
\begin{cor}\label{c:spKl}
Let $\phi=(\phi_{1}, \cdots, \phi_{m})\in V^{*,\st}_{m}(k)$ be a stable functional. Recall that $\frG_{\leq\l}$ in this case is $\Sym^{2}(M)_{\leq1}-\cup_{i=1}^{\ell}\Gamma_{i}$ where the divisor $\Gamma_{i}$ is defined by the equation $\gamma_{1}+\cdots+\gamma_{i}=1$ for functions $\gamma_{i}$ in \eqref{gammai}. Let $f_{\phi}: \pline\times \frG_{\leq\l}\to \AA^{1}$ be given by
\begin{equation*}
f_{\phi}(x,u\cdot v)=\omega(\phi_{m}v_{m}, u_{m})x+\sum_{i=1}^{\ell}\frac{\omega(\phi_{i}v_{i}, u_{m-i})}{1-\gamma_{1}(u\cdot v)-\cdots-\gamma_{i}(u\cdot v)}.
\end{equation*}
Let $\pi:\pline\times\frG_{\leq\l}\to\pline$ be the projection.
Then we have an isomorphism over $\pline$
\begin{equation*}
\Kl^{\St}_{\dualG,\bP_{m}}(1,\phi)\cong\pi_{!}f_{\phi}^{*}\AS_{\psi}[2n-1]\left(\frac{2n-1}{2}\right).
\end{equation*}
\end{cor}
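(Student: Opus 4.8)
The statement is the exact symplectic counterpart of Corollary \ref{c:uKl}, and the plan is to run the same three-step argument, feeding in the symplectic linear-algebra data of Proposition \ref{p:spcal} in place of the unitary data of Proposition \ref{p:ucal}. Concretely: (i) make the $\grot$-orbit map $a_\phi$ explicit, (ii) combine Corollary \ref{c:Klphi} with Proposition \ref{p:spcal}(2) to present $\Kl^{\St}_{\dualG,\bP_m}(1,\phi)$ as the pullback along $a_\phi$ of a Fourier transform, and (iii) use proper base change to turn that pullback into a pushforward along $\pi$, then identify the resulting exponent with $f_\phi$ via the explicit formulas \eqref{spf'}, \eqref{spf''}.

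\textbf{Step 1: the orbit map.} Here $e=1$, so $\tpline=\pline$, and I identify $\pline$ with the rotation torus $\grot$. The torus $\grot$ acts on $V^*_m$ by scaling the component $\phi_m\in\Sym^2(M_m^*)\subset\Hom(M_m,M_1)$ that is dual to the ``affine'' arrow $\Sym^2(M_1^*)$ of the cyclic quiver \eqref{spquiver}, and fixes the components $\phi_1,\dots,\phi_\ell$. Hence the map $a_\phi:\grot\cong\pline\to V^{*,\st}_m$, $\l\mapsto\l\cdot_\rot\phi$, has the explicit form $x\mapsto(x\phi_m,\phi_1,\dots,\phi_\ell)$ in the coordinates relevant to stable functionals.

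\textbf{Step 2: reduction to a Fourier transform.} Since $\chi=1$, the Kummer sheaf $\calL_\chi$ is trivial, and for the dominant short coroot $\l$ one has $\IC_\l=\Ql[2n-1](\tfrac{2n-1}{2})$. By Corollary \ref{c:Klphi} and Proposition \ref{p:spcal}(2),
\begin{equation*}
\Kl^{\St}_{\dualG,\bP_m}(1,\phi)\cong a_\phi^*\Kl^{\St}_{\dualG,\bP_m}(1)\cong a_\phi^*\Four_\psi\bigl(f''_!\Ql\bigr)[2n-1]\left(\tfrac{2n-1}{2}\right).
\end{equation*}
Proper base change applied to the Cartesian square relating $a_\phi$, the canonical pairing $\jiao{\cdot,\cdot}\colon V_m\times V^*_m\to\Ga$, and the projection off $V^*_m$ rewrites the right-hand side as $\pi_!\,b_\phi^*\AS_\psi[2n-1](\tfrac{2n-1}{2})$, where $\pi:\pline\times\frG_{\leq\l}\to\pline$ is the projection and $b_\phi(x,u\cdot v)=\jiao{a_\phi(x),\,f''(u\cdot v)}$ is the pairing of $a_\phi(x)\in V^*_m$ against $f''(u\cdot v)\in V_m$.

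\textbf{Step 3: identifying $b_\phi$ with $f_\phi$.} This is the computational core. I expand $b_\phi(x,u\cdot v)$ term by term using \eqref{spf''} and the decomposition $V_m=\Sym^2(M_1^*)\oplus\Hom(M_2,M_1)\oplus\cdots\oplus\Sym^2(M_\ell)$. The $\Sym^2(M_1^*)$-component $\omega(-,u_m)v_m$ of $f''$ pairs with $x\phi_m$ to produce $\omega(\phi_m v_m,u_m)\,x$, after unwinding the identification $M_1=M_m^*$; the $\Hom(M_{i+1},M_i)$-component $\frac{\omega(-,u_{m-i})}{1-\gamma_1-\cdots-\gamma_i}v_i$ pairs with $\phi_i$ to give $\frac{\omega(\phi_i v_i,\,u_{m-i})}{1-\gamma_1-\cdots-\gamma_i}$. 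Summing yields exactly $f_\phi(x,u\cdot v)$, and the stated isomorphism follows. I expect the main obstacle to be the sign/identification bookkeeping: one must check that the quiver pairing $\jiao{\cdot,\cdot}$ really reproduces the factors $\omega(\phi_i v_i,u_{m-i})$ and $\omega(\phi_m v_m,u_m)$ with the correct signs under $M_j=M_{m+1-j}^*$, and (already implicitly used when deriving \eqref{spf''} from the $A_{i,i+1}$ in the proof of Proposition \ref{p:spcal}) that the denominators simplify via $\gamma_j+\gamma_{m+1-j}=0$, which forces $\gamma_1+\cdots+\gamma_m=0$ and hence $1-\gamma_1-\cdots-\gamma_{m-i}=1-\gamma_1-\cdots-\gamma_i$. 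Everything else is formal, parallel to Corollary \ref{c:uKl}.
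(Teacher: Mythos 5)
Your proposal is correct and is essentially the argument the paper intends: the paper proves Corollary \ref{c:spKl} by the same route as Corollary \ref{c:uKl} (identify $a_{\phi}$ as scaling the affine component $\phi_{m}$, invoke Corollary \ref{c:Klphi} and Proposition \ref{p:spcal}(2), then use proper base change and read off the exponent from \eqref{spf''}), which is exactly your three steps. Your trace-pairing computation in Step 3, including the use of $\gamma_{j}+\gamma_{m+1-j}=0$ to match denominators, checks out.
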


\section{Examples: Split and quasi-split orthogonal groups}\label{s:o}
\subsection{Linear algebra}\label{olinear}
Assume $\chk\neq2$. Let $(M,q)$ be a quadratic space of dimension $2n$ or $2n+1$ over $k$. Let $(\cdot,\cdot):M\times M\to k$ be the associated symmetric bilinear form $(x,y)=q(x+y)-q(x)-q(y)$. The regular elliptic numbers of $m$ of the root systems of type $B_{n}, D_{n}$ and  $\leftexp{2}{D}_{n}$ are in bijection with
\begin{itemize}
\item Type $B_{n}$ ($\dim M=2n+1$): divisors $d|n$ (corresponding $m=2n/d$);
\item Type $D_{n}$ ($\dim M=2n$): even divisors $d|n$ (corresponding $m=2n/d$) or odd divisors $d|n-1$ (corresponding $m=2(n-1)/d$);
\item Type $\leftexp{2}{D}_{n}$ ($\dim M=2n$): odd divisors $d|n$ (corresponding $m=2n/d$) or even divisors $d|n-1$ (corresponding $m=2(n-1)/d$). 
\end{itemize}
We write $m=2\ell$. Fix a decomposition
\begin{equation}\label{sod}
M= M_{0}\oplus M_{1}\oplus\cdots M_{\ell-1}\oplus M_{\ell}\oplus M_{\ell+1}\oplus\cdots\oplus M_{m-1}
\end{equation}
where $\dim M_{i}=d$ for $i=1,\cdots, \ell-1, \ell+1,\cdots, m-1$, $\dim M_{0}$ and $\dim M_{\ell}$ are either $d$ or $d+1$, and we make sure that when $\dim M=2n+1$, $\dim M_{0}$ is even. We see that
\begin{itemize}
\item Type $B_{n}$: $\dim M_{0}$ is even and $\dim M_{\ell}$ is odd; 
\item Type $D_{n}$: $\dim M_{0}=\dim M_{\ell}$ is even;
\item Type $\leftexp{2}{D}_{n}$: $\dim M_{0}=\dim M_{\ell}$ is odd.
\end{itemize}
The decomposition \eqref{sod} should satisfy $(M_{i}, M_{j})=0$ unless $i+j\equiv 0\mod m$. The restriction of $q$ to $M_{0}$ and $M_{\ell}$ are denoted by $q_{0}$ and $q_{\ell}$. The pairing $(\cdot,\cdot)$ induce an isomorphism $M_{i}^{*}\cong M_{m-i}$.

Let $M_{+}=\oplus_{i>0}M_{i}$, then $M=M_{0}\oplus M_{+}$ and correspondingly $q=q_{0}\oplus q_{+}$. We use $q_{0,K}$ and $q_{+,K}$ to denote their $K$-linear extension to $M_{0}\otimes K$ and $M_{+}\otimes K$ respectively.  Define a new quadratic form $q'_{K}$ on $M\otimes K$ by
\begin{equation}\label{qK}
q'_{K}=\varpi q_{0,K}\oplus q_{+,K}.
\end{equation}
Let $G=\SO(M\otimes K, q'_{K})$. We have
\begin{itemize}
\item When $\dim M_{0}$ is even, choosing a maximal isotropic decomposition $M_{0}=M_{0}^{+}\oplus M_{0}^{-}$ gives a self-dual lattice $M_{0}^{+}\otimes\calO_{K}\oplus M_{0}^{-}\otimes\varpi^{-1}\calO_{K}\oplus M_{+}\otimes\calO_{K}\subset M\otimes K$. Therefore $G$ is a split orthogonal group.
\item When $\dim M_{0}$ is odd, $G$ is a quasi-split orthogonal group of type $\leftexp{2}{D}_{n}$.
\end{itemize}

Let $\wt\bP_{m}\subset G(K)$ be the stabilizer of the lattice chain $\L_{m-1}\supset \L_{m-1}\supset\cdots\supset\L_{0}$ under $G(K)$, where
\begin{equation*}
\L_{i}=\sum_{0\leq j\leq i}M_{j}\otimes\calO_{K}+\sum_{i<j\leq m-1}M_{j}\otimes\varpi\calO_{K}.
\end{equation*}
Its reductive quotient $\tL_{m}$ is the subgroup of $\Og(M_{0}, q_{0})\times \prod_{i=1}^{\ell-1}\GL(M_{i})\times\Og(M_{\ell}, q_{\ell})$ of index two, where the factor $\GL(M_{i})$ acts on $M_{i}$ by the standard representation and on $M_{m-i}=M_{i}^{*}$ by the dual of the standard representation.  We have 
\begin{equation}\label{soL}
\tLab_{m}\cong \{\pm1\}\times\prod_{i=1}^{\ell-1}\Gm
\end{equation}
by taking determinants (for elements in $\tL_{m}$, the determinant of the last factor $\Og(M_{\ell}, q_{\ell})$ is the same as the first one $\Og(M_{0},q_{0})$). The subgroup $\bP_{m}\subset\wt\bP_{m}$, defined as the kernel of $\wt\bP_{m}\to \tL_{m}\to\tLab_{m}\surj\{\pm1\}$, is an admissible parahoric subgroup of $G(K)$ with $m(\bP_{m})=m$.  The vector space $V_{m}:=V_{\bP_{m}}$ is
\begin{equation}\label{soV}
V_{m}=\Hom(M_{1},M_{0})\oplus\Hom(M_{2},M_{1})\oplus\cdots\oplus\Hom(M_{\ell}, M_{\ell-1}).
\end{equation}
Again, it is more natural to view $V_{m}$ as $\tau$-invariant cyclic quivers
\begin{equation}\label{oquiver}
\xymatrix{ & M_{1}\ar[dl]^{\psi_{0}} & M_{2}\ar[l]^{\psi_{1}} & \cdots\ar[l] & M_{\ell-1}\ar[l]^{\psi_{\ell-2}} \\
M_{0}\ar[dr]^{\psi_{m-1}} & & & & & M_{\ell}\ar[ul]^{\psi_{\ell-1}}\\
& M_{m-1}\ar[r]^{\psi_{m-2}} & M_{m-2}\ar[r] & \cdots\ar[r]^{\psi_{\ell+1}} & M_{\ell+1}\ar[ur]^{\psi_{\ell}} }
\end{equation}
where $\tau$ sends $\{\psi_{i}:M_{i+1}\to M_{i}\}$ to $\{-\psi_{m-1-i}^{*}: M_{i+1}=M_{m-1-i}^{*}\to M_{m-i}^{*}=M_{i}\}$ (indices are understood modulo $m$). 

When $m=2$, the quiver degenerates to a pair of maps
\begin{equation*}
\xymatrix{M_{0}\ar@<1ex>[r]^{\psi_{1}} & M_{1}\ar@<1ex>[l]^{\psi_{0}}}
\end{equation*}
such that $\psi_{1}=-\psi_{0}^{*}$.

\subsection{Stable locus} The dual space $V^{*}_{m}$ is the space of $\tau$-invariant cyclic quivers similar to \eqref{oquiver}, except that all the arrows are reversed.  Let $\phi_{i}: M_{i}\to M_{i+1}$ be the arrows. Then $\phi=(\phi_{0}, \cdots, \phi_{m-1})\in V^{*}_{m}$ is stable if and only if
\begin{itemize}
\item All the maps $\phi_{i}$ have the maximal possible rank;
\item We have two quadratic forms on $M_{0}$: $q_{0}$ and the pullback of $q_{\ell}$ to $M_{0}$ via the map $\phi_{\ell-1}\cdots\phi_{0}:M_{0}\to M_{\ell}$. They are in general position in the sense explained in \S\ref{ss:ustable}.
\end{itemize}

\subsection{The moduli stack}\label{omoduli}
We make a remark about the moduli stack $\Bun_{G}(\wt\bP_{0}, \bP^{+}_{\infty})$. It classifies the following data
\begin{enumerate}
\item A vector bundle $\calE$ over $X=\PP^{1}$ of rank equal to $\dim M$ with a perfect symmetric bilinear pairing on $\calE|_{\pline}$ (into $\calO_{\pline}$), such that the corresponding rational map $\iota: \calE\dashrightarrow\calE^{\vee}$ has simple pole at $0$ whose residue has rank $\dim M_{0}$ and simple zero at $\infty$. 
\item A filtration $\calE(-\{\infty\})=F_{-1}\calE\subset F_{0}\calE\subset F_{1}\calE\subset \cdots F_{m-1}\calE=\calE$ together with isomorphisms $\Gr^{F}_{i}\calE\cong M_{i}$. Extend this filtration by letting $F_{i+m}\calE:=F_{i}\calE(\{\infty\})$. Then we require that $F_{i}\calE$ and $F_{2m-1-i}\calE$ are in perfect pairing around $\infty$ under the quadratic form on $\calE$, such that the induced pairing between $M_{i}$ and $M_{m-i}$ is the same as the one fixed in \S\ref{olinear}.
\item A filtration $\calE(-\{0\})=F^{m}\calE\subset F^{m-1}\calE\subset \cdots\subset F^{1}\calE\subset F^{0}\calE=\calE$ such that $\Gr_{F}^{i}\calE$ has dimension $\dim M_{i}$. Extend this filtration by letting $F^{i+m}\calE:=F^{i}\calE(-\{0\})$. Then we require that $F^{i}\calE$ and $F^{1-i}\calE$ are in perfect pairing into around $0$ under the quadratic form on $\calE$. 
\item A trivialization of $\delta:\calO_{X}\cong\det\calE$ such that $\delta^{\vee}\circ\det(\iota)\circ\delta=t^{-\dim M_{0}}\in k[t,t^{-1}]=\Aut_{\pline}(\calO_{\pline})$ (which is unique up to a sign).
\end{enumerate}

\subsection{The scheme $\frG_{\l}$}\label{sofrG}
Let $\l\in\xcoch(T)$ be the dominant minuscule coweight such that $V_{\l}$ is the standard representation of the dual group $\dualG=\Sp_{2n}$ or $\SO_{2n}$. We shall describe the scheme $\frG_{\l}$ and the map $(f',f''):\frG_{\l}\to\tLab_{m}\times V_{m}$. 

Let $\calE_{+}=M_{+}\oplus\calO_{X}$ and $\calE_{0}=M_{0}\otimes\calO_{X}$. Let $q_{0,\calE}$ and $q_{+,\calE}$ denote the $\calO_{X}$-linear extension of $q_{0}$ and $q_{+}$ to $\calE_{0}$ and $\calE_{+}$. Define a new rational quadratic form on the trivial vector bundle $\calE=M\otimes\calO_{X}=\calE_{0}\oplus\calE_{+}$ by
\begin{equation}\label{tq}
q_{\calE}=t^{-1}q_{0,\calE}\oplus q_{+,\calE}.
\end{equation}
Then $(\calE,q_{\calE})$ satisfies the condition in \S\ref{omoduli}(1). The decomposition \eqref{sod} gives the filtrations required in \S\ref{omoduli}(2)(3). Fixing a trivialization of the line $\det M_{0}\otimes\det M_{\ell}$ (which is the same as $\det M$), then we get the data required in \S\ref{omoduli}(4). We thus get a point $(\calE, q_{\calE}, F_{*}, F^{*}, \cdots)\in\Bun_{G}(\wt\bP_{0}, \bP_{\infty}^{+})(k)$. This is the unique point of this moduli stack with trivial automorphism group.

The group ind-scheme $\frG$ is the group of orthogonal automorphisms of $\calE|_{X-\{1\}}$ preserving all the auxiliary data specified in \S\ref{omoduli}. The subscheme $\frG_{\l}$ consists of $g\in \frG\subset G(F)$ whose entries have at most simple poles at $t=1$, and  $\Res_{t=1}g$ has rank one. Therefore any element in $\frG_{\l}$ can be uniquely written as
\begin{equation*}
g=\frac{t}{t-1}A-\frac{1}{t-1}B
\end{equation*}
for $A,B\in\GL(M)$.

\begin{lemma}\label{l:sopre}
\begin{enumerate}
\item The scheme $\frG_{\l}$ classifies pairs $(A,B)\in\GL(M)\times\GL(M)$ of block form
\begin{equation}\label{AB0+}
A=\mat{I}{A_{0+}}{0}{A_{++}}; B=\mat{B_{00}}{0}{B_{+0}}{B_{++}}
\end{equation}
under the decomposition $M=M_{0}\oplus M_{+}$ and satisfying
\begin{itemize}
\item $A_{++}, B_{++}\in \Og(M_{+},q_{+}), B_{00}\in\Og(M_{0},q_{0})$ and
\begin{eqnarray}
\label{ooo1} (x,B_{00}x)=2q_{0}(x)+q_{+}(B_{+0}x);\\
\label{ooo2} (A_{++}y, B_{++}y)=q_{0}(A_{0+}y)+2q_{+}(y);\\
\label{ooo3} (x,A_{0+}y)=(B_{+0}x,A_{++}y);\\
\label{ooo4} (B_{00}x, A_{0+}y)=(B_{+0}x, B_{++}y)
\end{eqnarray}
for all $x\in M_{0}$ and $y\in M_{+}$.
\item $A_{++}$ lies in the unipotent radical $U(F_{*})$ of the parabolic $P(F_{*})\subset\SO(M_{+},q_{+})$ preserving the filtration $F_{*}$ of $M_{+}$; $B_{++}$ lies in the opposite parabolic $P(F^{*})\subset\Og(M_{+},q_{+})$ preserving the filtration $F^{*}$ of $M_{+}$;
\item $A-B$ has rank one.
\end{itemize}
\item The morphism $(f',f''): \frG_{\l}\to\tLab_{m}\times V_{m}$ is given by
\begin{eqnarray*}
f'(A,B)&=&(\det(B_{0,0}), \cdots, \det(B_{\ell,\ell}));\\
f''(A,B)&=&(A_{0,1}, \cdots, A_{\ell-1, \ell}) 
\end{eqnarray*}
in the block presentation of $A,B$ under the decomposition \eqref{sod}.
\item Consider the morphism $j:\frG_{\l}\to\PP(M)$ sending $(A,B)\in\frG_{\l}$ to the line in $M$ that is the image of the rank one endomorphism $C=I-A^{-1}B$. Then the image of $j$ is contained in the quadric $Q(q)$ defined by $q=0$, and the resulting map $j:\frG_{\l}\to Q(q)$ is an open embedding. 
\end{enumerate}
\end{lemma}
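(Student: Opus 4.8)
The plan is to follow the three‑step pattern of Lemma~\ref{l:upre} (the unitary case), the new feature being the twisted quadratic form $q_{\calE}=t^{-1}q_{0,\calE}\oplus q_{+,\calE}$ of \eqref{tq}, whose pole sits at $t=0$ rather than at the modification point $t=1$. For part (1): since $g\in\frG$ and its inverse are regular on $X-\{1\}$ with at worst a simple pole at $t=1$, one can write $g=\frac{t}{t-1}A-\frac{1}{t-1}B$ uniquely, with $A=g|_{t=\infty}$ and $B=g|_{t=0}$ in $\GL(M)$. Reading off the explicit filtrations from \eqref{sod}: preservation of $F_{*}$ at $\infty$ together with triviality of the action on $\Gr^{F}$ forces $A$ into the shape in \eqref{AB0+} with $A_{++}\in U(F_{*})$, and preservation of $F^{*}$ at $0$ forces $B$ into the shape in \eqref{AB0+} with $B_{++}\in P(F^{*})$. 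Substituting this expression for $g$ into the identity $q_{\calE}(gx)=q_{\calE}(x)$, clearing the denominator $t(t-1)^{2}$ and comparing coefficients of $t^{0},\dots,t^{3}$: the extreme degrees give $A_{++},B_{++}\in\Og(M_{+},q_{+})$ and $B_{00}\in\Og(M_{0},q_{0})$, while the coefficients of $t$ and $t^{2}$, after restricting the argument to $M_{0}$, to $M_{+}$, and to their ``cross'' part, unwind into exactly \eqref{ooo1}--\eqref{ooo4}. The rank‑one condition on $A-B=AC$ (with $C:=I-A^{-1}B$) is precisely the defining condition of the minuscule Schubert cell $\frG_{\l}$, since $V_{\l}$ is the standard, hence minuscule, representation of $\Sp_{2n}$ resp.\ $\SO_{2n}$. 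Finally $g\in\SO$ makes $\det g=1$; as the block shape gives $\det A=1$ and $\det(I+\frac{C}{t-1})=1+\frac{\tr C}{t-1}$, this is equivalent to $\tr C=0$, a fact used below.

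For part (2): evaluation at $t=0$ gives $B$, so composing with $\bP_{0}\to\wt L_{m}\to\tLab_{m}$ (a product of determinants) yields $f'(A,B)=(\det B_{0,0},\dots,\det B_{\ell,\ell})$; and expanding $g=A+t^{-1}(A-B)+O(t^{-2})$ near $t=\infty$ shows that the class of $g$ in $V_{m}=\bP^{+}_{\infty}/\bP^{++}_{\infty}$, described by \eqref{soV}, is read off from the off‑diagonal blocks $A_{i-1,i}$ of $A$, giving $f''(A,B)=(A_{0,1},\dots,A_{\ell-1,\ell})$. This is the same bookkeeping as in Lemma~\ref{l:upre}(2).

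For part (3): write $C=I-A^{-1}B$ as $v\otimes\psi$ with $v\in M$, $\psi\in M^{*}$, so $j(A,B)=[v]$ and, by part (1), $\tr C=\psi(v)=0$; hence $Cv=0$ and $g(v)=A(v+\tfrac{Cv}{t-1})=Av$. To see $[v]\in Q(q)$ I combine two things. First, since $q_{\calE}$ is regular and nondegenerate at $t=1$, the principal part at $t=1$ of the identity ``$g$ preserves $q_{\calE}$'' must vanish; its leading ($\tfrac{1}{(t-1)^{2}}$) coefficient is the quadratic form $x\mapsto q\bigl((\Res_{t=1}g)x\bigr)$ (using $q_{\calE}|_{t=1}=q$), and as $\Res_{t=1}g=AC$ has image $[Av]$ this forces $q(Av)=0$. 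Second, $q_{\calE}(g(v))=q_{\calE}(v)$ together with $g(v)=Av$ and $A_{++}\in\Og(M_{+},q_{+})$ forces $q_{0}(v_{0}+A_{0+}v_{+})=q_{0}(v_{0})$, whence $q(Av)=q(v)$; combining the two, $q(v)=0$. (Conceptually: $g$ is an orthogonal Hecke modification of minuscule type at $t=1$, and these are classified by the isotropic lines of the fibre, i.e.\ by the quadric $\Gr_{\le\l}\cong Q(q)$.) That $j$ is an open embedding onto an open subscheme of $Q(q)$ follows, exactly as in Lemma~\ref{l:upre}(3), from the fact that $U(F_{*})\cdot P(F^{*})$ is an open subscheme of $\Og(M_{+},q_{+})$ — so that $(A,B)$ is recovered uniquely from $[v]$ on this big cell — equivalently from $\frG_{\l}$ being the preimage in $\Gr_{\le\l}$ of the open point $\star\in\Bun$ under the other leg of the Hecke correspondence.

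I expect the main obstacle to be the coefficient‑matching in part (1): keeping the $M_{0}\oplus M_{+}$ block structure straight through the clearing of denominators, and correctly peeling the mixed $t^{1},t^{2}$ coefficients into the four identities \eqref{ooo1}--\eqref{ooo4}, is where essentially all of the work lies. The pole of $q_{\calE}$ at $t=0$ also makes the ``$[v]\in Q(q)$'' step in part (3) genuinely more delicate than in the unitary case, where $A$ and $B$ were honestly orthogonal: one really does need both the residue identity at $t=1$ and the vanishing of $\tr C$.
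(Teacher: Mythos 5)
Your parts (1) and (2) follow the paper's route, just with the verification made explicit: the paper merely asserts that preservation of $q_{\calE}$ is equivalent to the first bulleted conditions, and your substitution of $g=\frac{t}{t-1}A-\frac{1}{t-1}B$ with clearing of $t(t-1)^{2}$ and coefficient comparison is precisely that check (one bookkeeping slip: $B_{++}\in\Og(M_{+},q_{+})$ comes from the $t^{1}$-coefficient restricted to $M_{+}$, not from an extreme degree, while the extreme degrees give $A_{++}$ and $B_{00}$). Where you genuinely diverge is the proof that the image of $j$ lies in $Q(q)$. The paper extracts the reflection structure first: rank one of $C$ plus $A_{++}^{-1}B_{++}\in\Og(M_{+},q_{+})$ force $A_{++}^{-1}B_{++}=R_{[v_{+}]}$ and $A_{0+}=-\frac{(-,v_{+})}{q_{+}(v_{+})}v_{0}$, and then \eqref{ooo2} yields $q_{0}(v_{0})+q_{+}(v_{+})=0$. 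Your route --- $\det g=1$ (legitimate, by the determinant trivialization in \S\ref{omoduli}(4) and $\frG\subset G(F)$) gives $\tr C=0$, hence $Cv=0$ and $g(v)=Av$; the $(t-1)^{-2}$ part of the $q_{\calE}$-invariance gives $q(Av)=0$, while invariance on the constant section $v$ gives $q(Av)=q(v)$ --- is correct and arguably cleaner, since it avoids the reflection formulas at this point.

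The open-embedding claim, however, is where the paper's real work sits, and your treatment has a gap. In Lemma \ref{l:upre}(3) the big-cell fact suffices because $A^{-1}B$ is itself the reflection $R_{[v]}$, so $[v]$ determines it outright; here it does not. To recover $(A,B)$ from $[v]$ one must first show that $[v]$ determines the blocks $B_{00}=R_{[v_{0}]}$, $A_{0+}(x)=-\frac{(x,v_{+})}{q_{+}(v_{+})}v_{0}$, $A_{++}^{-1}B_{+0}(x)=\frac{(x,v_{0})}{q_{0}(v_{0})}v_{+}$ and $A_{++}^{-1}B_{++}=R_{[v_{+}]}$ --- exactly the rank-one/orthogonality analysis of the block form of $C$ together with \eqref{ooo3} and \eqref{ooo4} that the paper carries out (and which implicitly locates the image inside the locus $q_{+}(v_{+})\neq0$, $q_{0}(v_{0})\neq0$); only after that does the openness of $U(F_{*})\times P(F^{*})\hookrightarrow\Og(M_{+},q_{+})$ pin down $A_{++}$ and $B_{++}$ and finish. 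Your alternative appeal to ``$\frG_{\l}$ is the preimage of the open point under the Hecke correspondence'' could be made into a genuine substitute, but as stated it is only a sketch: one still has to identify $\Gr_{\l}$ with $Q(q)$ and check that this identification agrees with your $j$ defined via $C=I-A^{-1}B$. So the statement you assert is true, but the reconstruction of $(A,B)$ from $[v]$ is the missing content and needs to be spelled out.
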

\begin{proof}
(1) The fact that $g$ preserves the quadratic form $q_{\calE}$ in \eqref{tq} is equivalent to the conditions in the first bulleted point. The rest of (1) and (2) are similar to their unitary or symplectic counterparts. 

(3) Write $C=I-A^{-1}B$ in the block form
\begin{equation*}
C=\mat{I-B_{00}+A_{0+}A^{-1}_{++}B_{+0}}{A_{0+}A_{++}^{-1}B_{++}}{-A_{++}^{-1}B_{+0}}{I-A^{-1}_{++}B_{++}}.
\end{equation*}
Since $I-A^{-1}_{++}B_{++}$ has rank one (it is easy to show that it is nonzero) and $A^{-1}_{++}B_{++}\in\Og(M_{+},q_{+})$, we must have $A^{-1}_{++}B_{++}=R_{[v_{+}]}$ (the orthogonal reflection in the direction of $v_{+}$) for some nonzero vector $v_{+}\in M_{+}$. In other words, $I-A^{-1}_{++}B_{++}=\frac{(-,v_{+})}{q_{+}(v_{+})}v_{+}$. Since $C$ has rank one, we must have $A_{0+}A_{++}^{-1}B_{++}=\frac{(-,v_{+})}{q_{+}(v_{+})}v_{0}$ for some vector $v_{0}\in M_{0}$. Hence $A_{0+}(x)=-\frac{(x,v_{+})}{q_{+}(v_{+})}v_{0}$. The relation \eqref{ooo2} implies $q_{0}(v_{0})+q_{+}(v_{+})=0$, i.e. $v=(v_{0},v_{+})$ satisfies $q(v)=0$, or $[v]\in Q(q)$. Note that $[v]$ is exactly the image of $C$, i.e., $j(A,B)=[v]$.

Using \eqref{ooo3} and \eqref{ooo4} we get $B_{00}=R_{[v_{0}]}$ and $A^{-1}_{00}B_{+0}(x)=\frac{(x,v_{0})}{q_{0}(v_{0})}v_{+}$. Therefore $[v]$ determines $B_{00}, B_{+0}, A_{0+}$ and $A^{-1}_{++}B_{++}$. Since $U(F_{*})\times P(F^{*})\incl \Og(M_{+},q_{+})$ is an open embedding, $[v]$ also uniquely determines $A_{++}\in U(F^{*})$ and $B_{++}\in P(F^{*})$, and $j$ is also an open embedding.
\end{proof}

As in Proposition \ref{p:ucal}, for $1\leq i\leq \ell$, we define  $q_{[i,m-i]}$ to be the restriction of $q$ to $M_{i}\oplus\cdots\oplus M_{m-i}$ and extended by zero to $M$ (so that $q_{+}=q_{[1,m-1]}$).

\begin{prop}\label{p:socal} Recall that the group $G=\SO(M\otimes K, q'_{K})$ is of type $B_{n}, D_{n}$ or $\leftexp{2}{D}_{n}$.
\begin{enumerate}
\item Under the open embedding $j:\frG_{\l}\incl Q(q)$, $\frG_{\l}$ is the complement of the union of the divisors defined by $q_{\ell}=0, q_{[\ell-1,\ell+1]}=0,\cdots, q_{[1,m-1]}=0$. The morphism $(f',f''): Q(q)\supset \frG_{\l}\to \tLab_{m}\times V_{m}$ is given by
\begin{eqnarray}
\label{sof'} f'([v])&=&(-1, \frac{q_{[1,m-1]}(v)}{q_{[2,m-2]}(v)}, \frac{q_{[2,m-2]}(v)}{q_{[3,m-3]}(v)}, \cdots, \frac{q_{[\ell-1,\ell+1]}(v)}{q_{\ell}(v)})\\
\label{sof''} f''([v])&=&(-\frac{(-,v_{m-1})}{q_{[1,m-1]}(v)}v_{0},-\frac{(-,v_{m-2})}{q_{[2,m-2]}(v)}v_{1}, \cdots, -\frac{(-,v_{\ell})}{q_{\ell}(v)}v_{\ell-1}).
\end{eqnarray}
Here we  write $v=(v_{0},\cdots, v_{m-1})$ under the decomposition \eqref{sod}.
\item The local system $\Kl^{\St}_{\dualG, \bP_{m}}(\chi)$ attached to the standard representation of $\dualG=\Sp_{2n}$ or $\SO_{2n}$, the admissible parahoric subgroup $\bP_{m}$ and the multiplicative character $\chi=(\chi_{0},\cdots,\chi_{\ell})$ (where $\chi_{0}$ has order two) is given by (the restriction to $V^{*,\st}_{m}$ of) the Fourier transform of the complex $f''_{!}f'^{*}\calL_{\chi}[\dim M-2](\frac{\dim M-2}{2})$.
\end{enumerate}
\end{prop}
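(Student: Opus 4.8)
The plan is to follow the template of Propositions \ref{p:ucal} and \ref{p:spcal}: part (2) will be a formal consequence of Proposition \ref{p:Four} together with the computation of the intersection complex $\IC_{\l}$, while part (1) is a linear-algebra computation that refines Lemma \ref{l:sopre}.

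For part (2), the essential point is that $\l$ is a \emph{minuscule} coweight of $\GG$: it is the cocharacter dual to the highest weight $\omega_{1}$ of the standard representation of $\dualG=\Sp_{2n}$ (type $B_{n}$) or $\SO_{2n}$ (types $D_{n},\leftexp{2}{D}_{n}$), and in standard coordinates $\l=e_{1}$. Hence $\Gr_{\leq\l}$ is the closed $L^{+}\GG$-orbit $\GG/P$ for the maximal parabolic $P=P_{\alpha_{1}}$, which one identifies with the variety of $q$-isotropic lines in $M$, i.e.\ a smooth quadric hypersurface of dimension $\dim M-2$ in $\PP(M)$. Therefore $\IC_{\l}=\Ql[\dim M-2](\tfrac{\dim M-2}{2})$, so that $\ev_{\wt 1}^{*}\IC_{\l}$ is the shifted constant sheaf on $\frG_{\l}$; plugging this into Proposition \ref{p:Four} and commuting the shift and the Tate twist through $f''_{!}$ and the Fourier transform gives exactly the stated formula. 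I expect this step to be short.

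The substantive part is (1). Lemma \ref{l:sopre}(3) already does the bookkeeping at the outer block $M_{0}$: for $(A,B)\in\frG_{\l}$ the rank one operator $C=I-A^{-1}B$ has image a line $[v]=j(A,B)$ with $v=(v_{0},v_{+})$ and $q(v)=0$, and one has $q_{0}(v_{0})=-q_{+}(v_{+})$, $B_{00}=R_{[v_{0}]}$, $A_{0+}(x)=-\tfrac{(x,v_{+})}{q_{+}(v_{+})}v_{0}$, $A_{00}^{-1}B_{+0}(x)=\tfrac{(x,v_{0})}{q_{0}(v_{0})}v_{+}$ and $A_{++}^{-1}B_{++}=R_{[v_{+}]}$. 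From these the first entry of \eqref{sof''} (the block of $A_{0+}$ into $\Hom(M_{1},M_{0})$, which is $x_{1}\mapsto-\tfrac{(x_{1},v_{m-1})}{q_{[1,m-1]}(v)}v_{0}$) and the first entry $\det B_{00}=-1$ of \eqref{sof'} fall out immediately. The remaining work is entirely on the quadratic space $(M_{+},q_{+})$ with its filtration $F_{*}$ and middle summand $M_{\ell}$ carrying $q_{\ell}$ --- a structure identical, after re-indexing, to the space $(M,q)$ with middle summand $M_{0}$ in Proposition \ref{p:ucal}. I would factor $A_{++}\in U(F_{*})$ as a product $A_{1}\cdots A_{\ell}$ of elementary unipotents (each $A_{i}$ the identity off $M_{i}$, moving $M_{i}$ into $F_{<i}\cap M_{+}$), determine them one at a time from the requirement that $B_{++}=A_{++}R_{[v_{+}]}$ lie in the opposite parabolic $\overline{P}(F^{*})\subset\Og(M_{+},q_{+})$, and read off: (i) that this is solvable precisely when the denominators $q_{[1,m-1]}(v),\dots,q_{[\ell-1,\ell+1]}(v),q_{\ell}(v)$ are all nonzero, which pins down the image of the open embedding $j$ as the complement of these quadrics in $Q(q)$; (ii) the off-diagonal blocks $A_{i,i+1}=-\tfrac{(-,v_{m-1-i})}{q_{[i+1,m-1-i]}(v)}v_{i}$ for $1\le i\le\ell-1$, completing \eqref{sof''}; and (iii) the diagonal blocks $B_{i,i}=\id_{M_{i}}+\tfrac{(-,v_{m-i})}{q_{[i+1,m-i-1]}(v)}v_{i}$ for $1\le i\le\ell-1$ (and $B_{\ell,\ell}$ the reflection $R_{[v_{\ell}]}$), whose determinants telescope to $\det B_{i,i}=\tfrac{q_{[i,m-i]}(v)}{q_{[i+1,m-i-1]}(v)}$, giving the remaining entries of \eqref{sof'}. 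The three cases $B_{n},D_{n},\leftexp{2}{D}_{n}$ are treated uniformly.

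The main obstacle I anticipate is not conceptual but combinatorial: keeping the signs consistent (the involution $\tau$ here carries an extra minus sign), correctly handling the boundary behaviour at the middle index $\ell$, where $M_{\ell}$ pairs with itself rather than with a distinct summand, and checking that the outputs respect the index-two condition defining $\tL_{m}$ --- in particular that $\det B_{0,0}=\det B_{\ell,\ell}=-1$, so that $f'$ really lands in $\tLab_{m}=\{\pm1\}\times\prod_{i=1}^{\ell-1}\Gm$ as in \eqref{soL}.
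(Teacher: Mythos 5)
Your proposal is correct and follows essentially the same route as the paper: part (2) is the one-line application of Proposition \ref{p:Four} with $\IC_{\l}$ the shifted constant sheaf on the quadric $Q(q)$, and part (1) combines the block identities established in the proof of Lemma \ref{l:sopre}(3) (giving $\det B_{00}=-1$ and the first entry of \eqref{sof''} from $A_{0+}$) with the same inductive Gauss-type factorization of $A_{++}$ against $R_{[v_{+}]}$ that the paper simply cites from the proof of Proposition \ref{p:ucal}. Your explicit intermediate formulas (the blocks $A_{i,i+1}$, the diagonal blocks $B_{i,i}$ with telescoping determinants, $B_{\ell,\ell}=R_{[v_{\ell}]}$, and the nonvanishing of $q_{[1,m-1]},\dots,q_{\ell}$ cutting out the image of $j$) agree with what that procedure yields, so there is no gap.
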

\begin{proof} 
(1) In the proof of Lemma \ref{l:sopre} we have shown that if $j(A,B)=[v]$ and $v=(v_{0}, v_{+})$, then $A^{-1}_{++}B_{++}=R_{[v_{+}]}, B_{00}=R_{[v_{0}]}$ and $A_{0+}(x)=-\frac{(x,v_{+})}{q_{+}(v_{+})}v_{0}$.  In particular, $\det(B_{00})=-1$, which gives the first entry of $f'(A,B)$ given in \eqref{sof'}.

It remains to express $R_{[v_{+}]}$ as $A^{-1}_{++}B_{++}$ for $A\in U(F_{*})$ and $B\in P(F^{*})$. The procedure is the same as in the proof of Proposition \ref{p:ucal}. This gives all of \eqref{sof'} and \eqref{sof''} but the first entry of \eqref{sof''}. The first entry of $f''(A,B)$ is $A_{0,1}$, which is the first block of $A_{0+}=-\frac{(-,v_{+})}{q_{+}(v_{+})}v_{0}$. 

(2) We only need to apply Proposition \ref{p:Four} to the standard representation $V_{\l}$ of $\dualG$; note that $\IC_{\l}=\Ql[\dim M-2](\frac{\dim M-2}{2})$ in this case. 
\end{proof}

Similar to Corollary \ref{c:uKl}, we have
\begin{cor}\label{c:oKl}
Let $\phi=(\phi_{0}, \phi_{1}, \cdots, \phi_{m-1})\in V^{*,\st}_{m}(k)$ be a stable functional. Recall that $\frG_{\l}$ in this case is $Q(q)-\cup_{i=1}^{\ell}Q(q_{[i,m-i]})$. Let $f_{\phi}: \tpline\times \frG_{\l}\to \AA^{1}$ be given by
\begin{equation*}
f_{\phi}(x,[v])=-\frac{(\phi_{0}v_{0}, v_{m-1})}{q_{[1,m-1]}(v)}x-\sum_{i=1}^{\ell-1}\frac{(\phi_{i}v_{i}, v_{m-i-1})}{q_{[i+1, m-i-1]}(v)}.
\end{equation*}
Let $\pi:\tpline\times\frG_{\l}\to\tpline$ be the projection.
Then we have an isomorphism over $\tpline$
\begin{equation*}
\Kl^{\St}_{\dualG,m}(1,\phi)\cong\pi_{!}f_{\phi}^{*}\AS_{\psi}[\dim M-2]\left(\frac{\dim M-2}{2}\right).
\end{equation*}
\end{cor}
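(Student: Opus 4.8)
The plan is to repeat essentially verbatim the argument proving Corollaries \ref{c:uKl} and \ref{c:spKl}, feeding in the explicit description of $(f',f'')$ for the orthogonal groups supplied by Proposition \ref{p:socal}. So the proof will be short: it is just an unwinding of Corollary \ref{c:Klphi} and Proposition \ref{p:socal} via proper base change.

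First I would identify $\tpline$ with the rotation torus $\grot$ and record the $\grot$-action on $V^{*}_{m}$. Inspecting the Moy--Prasad grading underlying $V_{m}=\Hom(M_{1},M_{0})\oplus\Hom(M_{2},M_{1})\oplus\dots\oplus\Hom(M_{\ell},M_{\ell-1})$, only the summand $\Hom(M_{1},M_{0})$ carries an affine simple root of $G(K)$, so $\grot$ acts with nonzero weight exactly on that factor; dually it scales the coordinate $\phi_{0}$ of $\phi=(\phi_{0},\dots,\phi_{m-1})\in V^{*}_{m}$ and fixes $\phi_{1},\dots,\phi_{m-1}$ (parallel to the unitary case, where $\phi_{\ell}$ is scaled, and the symplectic case, where $\phi_{m}$ is scaled). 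Hence the orbit map of Corollary \ref{c:Klphi} is $a_{\phi}\colon\grot\to V^{*,\st}_{m}$, $x\mapsto(x\phi_{0},\phi_{1},\dots,\phi_{m-1})$, and its image indeed lies in $V^{*,\st}_{m}$ since stability is $L_{m}\rtimes\grot$-invariant, as in Lemma \ref{l:Kl des}.

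Next I would combine Corollary \ref{c:Klphi} with Proposition \ref{p:socal}(2) specialized to $\chi=1$, which gives
\begin{equation*}
\Kl^{\St}_{\dualG,m}(1,\phi)\;=\;a_{\phi}^{*}\,\Kl^{\St}_{\dualG,\bP_{m}}(1)\;\cong\;a_{\phi}^{*}\,\Four_{\psi}\!\big(f''_{!}\Ql\big)\,[\dim M-2]\!\left(\tfrac{\dim M-2}{2}\right),
\end{equation*}
and then invoke proper base change for the projection $V_{m}\times V^{*}_{m}\to V^{*}_{m}$: the pullback along $a_{\phi}$ of $\Four_{\psi}(f''_{!}\Ql)$ becomes $\pi_{!}\,b_{\phi}^{*}\AS_{\psi}$, where $b_{\phi}\colon\grot\times\frG_{\l}\to\AA^{1}$ sends $(x,[v])$ to the natural pairing $\jiao{a_{\phi}(x),f''([v])}=x\cdot_{\rot}f''([v])$. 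Finally I would substitute the explicit $f''([v])=\big(-\tfrac{(-,v_{m-1})}{q_{[1,m-1]}(v)}v_{0},\,\dots,\,-\tfrac{(-,v_{\ell})}{q_{\ell}(v)}v_{\ell-1}\big)$ of Proposition \ref{p:socal}(1): pairing it against $(x\phi_{0},\phi_{1},\dots,\phi_{m-1})$ (the $x$ sitting only in front of the $\phi_{0}$-term) reproduces exactly $-\tfrac{(\phi_{0}v_{0},v_{m-1})}{q_{[1,m-1]}(v)}x-\sum_{i=1}^{\ell-1}\tfrac{(\phi_{i}v_{i},v_{m-i-1})}{q_{[i+1,m-i-1]}(v)}=f_{\phi}(x,[v])$, so $b_{\phi}=f_{\phi}$ and the claimed isomorphism follows.

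The only mildly delicate point is the combinatorial bookkeeping: confirming that the $\grot$-weight on $V^{*}_{m}$ is concentrated in the $\phi_{0}$-coordinate, and matching the signs and the denominators $q_{[i+1,m-i-1]}$ in $\jiao{\phi,f''([v])}$ with the displayed formula for $f_{\phi}$. Given Proposition \ref{p:socal} this is entirely routine and proceeds word for word as in the unitary and symplectic cases; no new ideas beyond those already set up in \S\ref{s:o} are required.
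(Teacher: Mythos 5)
Your proposal is correct and follows essentially the same route as the paper, which gives no separate argument for this corollary but simply invokes the proof of Corollary \ref{c:uKl}: pull back $\Kl^{\St}_{\dualG,\bP_m}(\chi)$ from Proposition \ref{p:socal}(2) along the $\grot$-orbit map $a_\phi$ (Corollary \ref{c:Klphi}), apply proper base change to turn the Fourier transform into $\pi_!b_\phi^*\AS_\psi$, and match $b_\phi=f_\phi$ using the explicit $f''$. Your placement of the rotation weight on the $\phi_0$-coordinate (the summand tied to the $\varpi$-twist $q'_K=\varpi q_{0,K}\oplus q_{+,K}$, i.e.\ the wrap-around arrow of the cyclic quiver) is exactly the analogue of the unproved assertion in the unitary case that $\grot$ scales $\phi_\ell$, though your phrase ``carries an affine simple root'' would be better stated as ``consists of affine root spaces with nonzero $\delta$-coefficient.''
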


\section{Addendum to \cite{Ymotive}}

\begin{theorem} The Conjectures 5.10, 5.11 and 5.13 of \cite{Ymotive} are true. In particular, the local systems constructed in the main theorem of \cite{Ymotive} are cohomologically rigid.
\end{theorem}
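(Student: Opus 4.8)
The plan is to identify the local systems of \cite{Ymotive} with generalized Kloosterman sheaves $\Kl_{\dualG,\bP}(\chi,\phi)$ for suitable $(G,\bP,\chi,\phi)$, and then to read off the three conjectures from the general results of \S\ref{s:cons}--\S\ref{s:rigidity}. Concretely, each local system in \cite{Ymotive} should be recognized, via the uniqueness statement of Proposition \ref{p:uniquefunction} together with Corollary \ref{c:Klphi}, as the restriction along a $\grot$-orbit $a_\phi$ of the master local system $\Kl_{\dualG,\bP}(\chi)$ on $V^{*,\st}_\bP$ produced in Theorem \ref{th:main} and Lemma \ref{l:Kl des}; matching the two Hecke-eigensheaf constructions is routine once the parahoric and the character data are matched up.

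Given this dictionary, Conjecture 5.10 of \cite{Ymotive} (the description of the local monodromy at $0$) is, in the cases where the relevant character is trivial, exactly Theorem \ref{th:unip}: the monodromy at $0$ is tame and unipotent, given by the class $\unu_\bP$, which Proposition \ref{p:unip ind} and the tables of \S\ref{ss:tables} identify with an explicit Bala--Carter class. In the cases of \cite{Ymotive} involving a nontrivial $\chi$ one instead uses the description of \S\ref{ss:localmono}: a topological generator $\xi\in\calI^{t}_0$ maps to $(\kappa u,\sigma)\in\dualG^{\sigma}\times\sigma\subset\dG$, where the semisimple part $\kappa=\chi''(\xi)\in\dualT_\sigma$ is computed from $\chi$ by local class field theory as in the proof of \cite[Proposition 5.4]{Ymotive} (using \cite{Be}), and the unipotent part $u$ lies in the class $\unu_{\kappa,\bP}$ attached by Lusztig's bijection (Theorem \ref{th:Lu}) to the cell $\unc_{\kappa,\bP}$ of the affine Weyl group $W_{\kappa,\aff}$ of the group dual to the centralizer $\dualG^{\sigma,\circ}_\kappa$. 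Making this unconditional requires the monodromic analogue of Bezrukavnikov's cell theory developed in \cite{Be2}; granting it, the argument is formally the same as in \S\ref{ss:pf unip}. Conjecture 5.11 then follows by combining this with the description of the wild inertia at $\infty$ already available for the groups in question, after checking case by case against the tables of \S\ref{ss:tables} that the resulting conjugacy classes in $\dG$ agree with those predicted in \cite{Ymotive}.

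For Conjecture 5.13 and the cohomological rigidity assertion, the plan is to apply the Proposition of \S\ref{s:rigidity} directly. Its only hypotheses beyond Theorem \ref{th:unip} are the two properties of the monodromy at $\infty$ recalled in \S\ref{ss:wildL}, namely $\dualg^{\calI_\infty}=0$ and $\Swan_\infty(\dualg)=\#\Phi/m$; in the situations of \cite{Ymotive} these are part of, or immediate consequences of, the description of the monodromy at $\infty$ established there (and where necessary can be re-derived by the Grothendieck--Ogg--Shafarevich bookkeeping used in the proof of that Proposition). With these inputs the exact sequence \eqref{hh1}, together with the identification $\dim\dualg^{\calI_0}=\dim L_\bP=\#\Phi/m=\dim\cohoc{1}{\pline,\Kl^{\Ad}_{\dualG,\bP}(\phi)}$, forces $\cohog{*}{X,j_{!*}\Kl^{\Ad}_{\dualG,\bP}(\phi)}=0$.

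I expect the main obstacle to be the bookkeeping in the cases with nontrivial $\chi$: one must first deduce the general-$\chi$ form of Theorem \ref{th:unip} from \cite{Be2} --- this is anticipated but not carried out in the present paper and is where the real work lies --- and then carry out the case-by-case identification of the abstract pair $(\kappa u,\sigma)$ with the explicit elements of \cite{Ymotive}, analysing the centralizers $\dualG^{\sigma,\circ}_\kappa$ and using the Bala--Carter tables of \S\ref{ss:tables}. By contrast the $\chi=1$ cases, and the passage from the $0$- and $\infty$-monodromy to cohomological rigidity, are essentially immediate from Theorem \ref{th:unip} and the Proposition of \S\ref{s:rigidity}.
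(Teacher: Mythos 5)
Your proposal rests on a misidentification. The local systems of \cite{Ymotive} are not instances of the generalized Kloosterman sheaves $\Kl_{\dG,\bP}(\chi,\phi)$ constructed here: they live on $\PP^{1}-\{0,1,\infty\}$, are attached to a different (everywhere tame) automorphic datum, and in particular are not epipelagic at $\infty$, so Proposition \ref{p:uniquefunction} and Corollary \ref{c:Klphi} cannot be used to recognize them as restrictions of the ``master'' local system on $V^{*,\st}_{\bP}$. The actual relation is methodological, not object-level: Conjecture 5.10 of \cite{Ymotive} (the unipotent class of the tame monodromy at $0$) is proved by running the \emph{same argument} as in \S\ref{ss:Hk0}--\S\ref{ss:pf unip} (Gaitsgory's central functor, the cell filtration and Serre quotient, and Bezrukavnikov's theorems) in the setting of \cite{Ymotive}, not by quoting Theorem \ref{th:unip} as a statement about the same sheaf. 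Because of this, your deduction of Conjecture 5.13 by ``applying the Proposition of \S\ref{s:rigidity} directly'' also fails: that proposition's hypotheses (two punctures, wild monodromy at $\infty$ with $\dualg^{\calI_\infty}=0$ and $\Swan_\infty(\dualg)=\#\Phi/m$) are specific to the epipelagic situation and do not hold for the local systems of \cite{Ymotive}; the intended route is \cite[Lemma 5.14]{Ymotive}, which derives Conjecture 5.13 from Conjectures 5.10 and 5.11 by a separate Euler-characteristic count adapted to three tame points.

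The more serious gap is Conjecture 5.11. It concerns the tame monodromy at the puncture where the automorphic datum of \cite{Ymotive} carries a nontrivial character, and its proof requires a \emph{monodromic} analogue of the geometric Satake/central-sheaf machinery --- precisely the forthcoming work \cite{YZ} (together with the monodromic cell theory of \cite{Be2}); this is why the present paper defers it. Your sketch instead asserts that 5.11 ``follows by combining this with the description of the wild inertia at $\infty$ already available'' plus case-by-case checking against the tables of \S\ref{ss:tables}; this both mischaracterizes the content of 5.11 (no wild inertia is involved) and supplies no substitute for the missing monodromic Satake input. You do flag that the general-$\chi$ form of Theorem \ref{th:unip} is ``anticipated but not carried out,'' but the proposal then treats it as available, so the argument as written does not close.
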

In fact,  \cite[Conjecture 5.10]{Ymotive} can be proved in the same way as Theorem \ref{th:unip}. A proof of \cite[Conjecture 5.11]{Ymotive} will follow from results in the forthcoming work \cite{YZ}. Finally, by \cite[Lemma 5.14]{Ymotive}, \cite[Conjecture 5.13]{Ymotive} follows from the other two conjectures mentioned above.

\noindent{\bf Acknowledgements} The author thanks  B. Gross and J-K. Yu for inspiring conversations.

\end{document}